\documentclass[12pt,leqno,twoside]{article}
\usepackage{amssymb}
\usepackage{amsmath}
\usepackage{amsthm,amscd}
\usepackage{bbm}
\usepackage{t1enc}
\usepackage{a4,indentfirst,latexsym}
\usepackage{graphics}
\usepackage[mathscr]{eucal}
\usepackage{mathrsfs}
\usepackage{cite,enumitem,graphicx}
\usepackage[colorlinks=true,urlcolor=blue,
citecolor=blue,linkcolor=blue,linktocpage,pdfpagelabels,
bookmarksnumbered,bookmarksopen]{hyperref}
\usepackage[colorinlistoftodos]{todonotes}
\usepackage{color}

\allowdisplaybreaks %allowbreaks of equations

\newtheorem{theorem}{Theorem}[section]
\newtheorem{corollary}[theorem]{Corollary}

\newtheorem{lemma}[theorem]{Lemma}
\newtheorem{proposition}[theorem]{Proposition}

\theoremstyle{definition}
\newtheorem{definition}[theorem]{Definition}
\newtheorem{remark}[theorem]{Remark}

\newcommand{\oH}{
	\mathring{\mathrm{H}}^1}

\newcommand\supp{\mathrm{supp}}
\newcommand\la{\langle}
\newcommand\ra{\rangle}

\def\R{\mathbb{R}}
\def\SS{\mathbb{S}}
\def\B{\mathbb{B}}

\def\i{\mathrm{i}}

\newcommand{\bW}{{\mathbf W}}
\newcommand{\bphi}{\boldsymbol{\phi}}
\newcommand{\bh}{{\mathbf h}}
\newcommand{\bu}{{\mathbf u}}

\newcommand{\rL}{{\mathrm L}}
\newcommand{\rH}{{\mathrm H}}

\newcommand{\cA}{{\mathcal A}}

\newcommand{\cH}{{\mathcal H}}
\newcommand{\cI}{{\mathcal I}}

\newcommand{\cL}{{\mathcal L}}

\newcommand{\cN}{{\mathcal N}}
\newcommand{\cO}{{\mathcal O}}

\newcommand{\cR}{{\mathcal R}}
\newcommand{\cS}{{\mathcal S}}
\newcommand{\cT}{{\mathcal T}}

\newcommand{\fL}{{\mathfrak L}}

\newcommand{\fR}{{\mathfrak R}}

\newcommand{\Si}{\Sigma}

\newcommand{\intsigma}{{\mathring\Si}}

\newcommand{\sumi}{\substack{i'=1 \\i' \neq i}} 

\newcommand{\sumj}{\substack{j'=1 \\j' \neq j}}

\title
{Blow-up Solutions for General Toda Systems on Riemann Surfaces}  

\author{ Zhengni Hu 
	and Miaomiao Zhu}

\begin{document}
	
	\maketitle
	%%%%%%%%%%%%%%%%%%%%%%%%%%%%%%%%%%%%%%%%%%%%%%%%%%%%%%%
	%             5. ABSTRACT
	%%%%%%%%%%%%%%%%%%%%%%%%%%%%%%%%%%%%%%%%%%%%%%%%%%%%%%%
	\begin{abstract}
		In this paper, we study  general Toda systems with homogeneous Neumann boundary conditions on Riemann surfaces. Assuming the surface satisfies the  ``$k$-symmetric'' condition, we construct a family of bubbling solutions using singular perturbation methods, where the concentration rates of different components occur in distinct orders.
		In particular, we establish the existence of asymmetric blow-up solutions for the $SU(3)$ Toda system. Furthermore, the blow-up points are precisely located at the ``$k$-symmetric'' centers of the surface.\\
		\medskip
		
		% Please provide a minimum of 5 keywords or phrases.
		\noindent{\bf Keywords} {Toda system, Neumann boundary condition, Blow-up solutions, $k$-symmetry, Finite-dimensional reduction}\\
		
		% 2020 MSC numbers are required.
		\noindent	{\bf 2020 MSC } {Primary: 35J20 35J57; Secondary: 35J61.}\\
	\end{abstract}

	%%%%%%%%%%%%%%%%%%%%%%%%%%%%%%%%%%%%%%%%%%%%%%%%%%%%%%
	%                   6. BODY
	%%%%%%%%%%%%%%%%%%%%%%%%%%%%%%%%%%%%%%%%%%%%%%%%%%%%%%
	
	% Only the first word and proper nouns of section titles should be capitalized.
	% The title of section 1:
	\section{Introduction}\label{sec:1}
	Given a Riemann surface $
	(\Sigma,g)$ with smooth boundary 
	$\partial \Sigma$ (possible to be empty) and a positive integer $N\geq 2$, we consider the following  Toda system equipped with homogeneous Neumann boundary conditions
	\begin{equation}\label{eq:toda_mfN}
		\begin{cases}
			-\Delta_g u_1 = \sum_{j=1}^N a_{ij}\rho_j\left( \frac{V_je^{u_j}}{\int_{\Sigma} V_je^{u_j} dv_g}-\frac 1 {|\Sigma|_g} \right)    & \text{ in } \intsigma\\
			\partial_{\nu_g} u_1=\dots= \partial_{\nu_g} u_N= 0 & \text{ on } \partial\Sigma
		\end{cases},
	\end{equation}
	where 
	$\intsigma=\Sigma\setminus\partial\Sigma$ is the interior of $\Sigma$, $\nu_g$  is the outward unit normal vector on $\partial\Sigma$, $\Delta_g$ is the Laplace-Beltrami operator, $dv_g$  is the Riemannian volume element, $|\Sigma|_g=\int_{\Sigma} dv_g$ denotes the total volume of $\Sigma$, 	$\rho_i$ is a non-negative parameter,  $V_i:\Sigma\rightarrow \R$ is smooth positive functions, and the matrix 
	$(a_{ij})$ is one of the Cartan matrices of a general simple Lie algebra with rank $N$. 	For simplicity, we normalize the volume of $\Sigma$, i.e., $|\Sigma|_g =1$.
	
	In this paper, we consider the case where the Cartan matrix 
	$(a_{ij})$ takes one of the following forms:
	\[
	\mathbf{A}_N =
	\begin{pmatrix}
		2 & -1 & 0 & \dots & 0 \\
		-1 & 2 & -1 & \dots & 0 \\
		\vdots & \vdots & \ddots & \ddots & \vdots \\
		0 & \dots & -1 & 2 & -1 \\
		0 & \dots & 0 & -1 & 2
	\end{pmatrix},
	\quad
	\mathbf{B}_N =
	\begin{pmatrix}
		2 & -1 & 0 & \dots & 0 \\
		-1 & 2 & -1 & \dots & 0 \\
		\vdots & \vdots & \ddots & \ddots & \vdots \\
		0 & \dots & -1 & 2 & -2 \\
		0 & \dots & 0 & -1 & 2
	\end{pmatrix},
	\]
	\[
	\mathbf{C}_N =\left(
	\begin{matrix}
		2 & -1 & 0 & \dots & 0 \\
		-1 & 2 & -1 & \dots & 0 \\
		\vdots & \vdots & \ddots & \ddots & \vdots \\
		0 & \dots & -1 & 2 & -1 \\
		0 & \dots & 0 & -2 & 2
	\end{matrix}\right), \text{ and }
	\mathbf{G}_2:= \begin{pmatrix}
		2&-1\\
		-3&2
	\end{pmatrix}.\]
	Among these, the system \eqref{eq:toda_mfN} associated with the Cartan matrix $\mathbf A_N$ is usually referred to as the $SU(N+1)$ Toda system. 
	Moreover, the arguments developed in this paper can be extended, with only minor modifications, to Toda systems associated with more general Cartan matrices. 
	For simplicity of presentation, we restrict ourselves to the Cartan matrices considered above.

	Toda system \eqref{eq:toda_mfN} has been extensively studied over the past few decades due to its deep connections with various areas in geometry and physics. In differential geometry, it is related to the theory of harmonic maps and holomorphic curves into complex projective spaces $\mathbb{C}\mathbb{P}^N$ (see~\cite{guest1997harmonic,bolton1988conformal,doliwa1997holomorphic,chern1987harmonic,bolton1997geometrical} and the references therein). In particular, when $\Sigma=\SS^2$, the solution space of the $SU(3)$ Toda system coincides with the space of  holomorphic curves of $\mathbb{S}^2$ into $\mathbb{C}\mathbb{P}^2$~\cite{LWY2021Cla}. In mathematical physics,  Toda systems arise naturally in the study of non-abelian Chern–Simons gauge theories (see~\cite{dunne1991self,dunne1995self,nolasco1999double,nolasco2000vortex,yang1999relativistic,yang2001solitons}, and the reference therein).
	
	Since the solution space of  \eqref{eq:toda_mfN} is invariant under the addition of constants, we consider weak solutions in the subspace of the $N$-fold product of the Sobolev space $H^1(\Sigma)$ with zero average. Specifically, we define
	$$
	\mathcal{H} := \underbrace{\oH \times \dots \times \oH}_{N \text{ times }},
	$$
	where
	$
	\oH := \left\{ u \in H^1(\Sigma) : \int_\Sigma u \, dv_g = 0 \right\}.
	$
	
	Toda system has been widely studied in planar domains with Dirichlet boundary conditions and on closed Riemann surfaces. 
	These studies focus on various aspects of the system, including existence, uniqueness, and blow-up behavior. For existence results, we refer the reader to~\cite{jostandwang2001,jost_lin_wang2006,li2005solutions,battaglia_jevnikar_ruiz2015,Malchiodi2007SomeER,jevnikar_kallel_Malchiodi2015} and references therein. For uniqueness and non-degeneracy results, see~\cite{Bartolucci2023Non,LWY2021Cla}, and for blow-up analysis, we refer to~\cite{jost_lin_wang2006,LWZ2016Local,LWZ2025Cla,lee_degree_2018,musso_new_2016,D'aprile_asymmetric_2016}, among others.
	
	The  Toda system under Neumann boundary conditions on Riemann surfaces has been much less studied. In~\cite{zhu2011solutions}, X. B. Zhu constructs solutions to the $SU(3)$ Toda system as minimizers of the corresponding Euler-Lagrange functional in the case $\rho_i = 2\pi$ for all $i = 1, 2$.

	Let the local limit mass $\sigma_i(x')$ be defined by
	\[
	\sigma_i(x') = \lim_{n \to \infty} \lim_{r \to 0} \int_{\substack{x \in \Sigma, d_g(x,x') < r}} \frac{\rho_i^n V_i e^{u_i^n}}{\int_\Sigma V_i e^{u_i^n} dv_g} \, dv_g,
	\]
	for $i = 1, \ldots, N$, where $d_g(\cdot, \cdot)$ denotes the geodesic distance on $\Sigma$ with respect to the metric $g$.
	
	We are particularly interested in   the $SU(3)$ Toda system:
	\begin{equation}\label{eq:A_2_Toda}
		\left\{
		\begin{aligned}
			-\Delta_g u_1 &= 2\rho_1\left( \frac{V_1 e^{u_1}}{\int_{\Sigma} V_1 e^{u_1} \, dv_g} - 1\right) - \rho_2\left( \frac{V_2 e^{u_2}}{\int_{\Sigma} V_2 e^{u_2} \, dv_g} - 1\right) & &\text{ in } \intsigma, \\
			-\Delta_g u_2 &= 2\rho_2\left( \frac{V_2 e^{u_2}}{\int_{\Sigma} V_2 e^{u_2} \, dv_g} - 1\right) - \rho_1\left( \frac{V_1 e^{u_1}}{\int_{\Sigma} V_1 e^{u_1} \, dv_g} - 1\right) && \text{ in } \intsigma, \\
			\partial_{\nu_g} u_1 &= \partial_{\nu_g} u_2 = 0 & &\text{ on } \partial \Sigma.
		\end{aligned}
		\right.
	\end{equation}
	By the classical blow-up analysis, for any family of blow-up solutions $u^n := (u_1^n, u_2^n)$ to the $SU(3)$ Toda system with parameters $\rho^n := (\rho_1^n, \rho_2^n) \to (\rho_1,\rho_2)$, the set of blow-up points is given by
	\[
	\mathcal{S} := \Big \{ x \in \Sigma : \exists x_n \to x \text{ such that } \max_i \Big(u_i^n(x_n) -\int_{\Sigma} V_i e^{u_i^n} dv_g\Big)  \to \infty \Big\}.
	\]
	This set is finite and consists of points with local limit masses $(\sigma_1(x), \sigma_2(x))$ that take  one of the following values:
	\[
	\Big\{
	\Big( 0, \frac{1}{2} \varrho(x) \Big), \Big( \frac{1}{2} \varrho(x), 0 \Big), \Big( \frac{1}{2} \varrho(x), \varrho(x) \Big), \Big( \varrho(x), \frac{1}{2} \varrho(x) \Big), \Big( \varrho(x), \varrho(x) \Big)
	\Big\},
	\]
	for $x \in \mathcal{S}$, where $\varrho(x) = 8\pi$ if $x \in \Sigma$ and $\varrho(x) = 4\pi$ if $x \in \partial \Sigma$.
	
	The above result was established in~\cite{jostandwang2001,jost_lin_wang2006} for interior blow-up points. The boundary case is expected to follow by a similar blow-up analysis, with half-mass contributions arising from boundary effects. Since this lies outside the scope of the present paper, we omit further details.

	According to the value of the local limit masses, we classify the blow-up phenomena of the $SU(3)$ Toda system around the blow-up points into the following three scenarios:
	\begin{itemize}
		\item \textbf{Partial blow-ups}: $\left(0, \frac{1}{2} \varrho(x)\right), \left(\frac{1}{2} \varrho(x), 0\right)$;
		\item \textbf{Asymmetric blow-ups}: $ \left(\frac{1}{2} \varrho(x), \varrho(x)\right), \left(\varrho(x), \frac{1}{2} \varrho(x)\right)$;
		\item \textbf{Full blow-ups}: $\left(\varrho(x), \varrho(x)\right)$.
	\end{itemize}
	
	Next, we introduce works related to the existence of blow-up solutions for the $SU(3)$ Toda system. 
	
	Partial blow-up solutions, under a  non-degeneracy condition, have been constructed in various settings: on planar domains under Dirichlet boundary condition in \cite{DAprile_Pistoia_Ruiz2015}, on closed surfaces in \cite{lee_degree_2018}, and on Riemann surfaces with boundary under Neumann boundary condition   in \cite{ABH2024Partial}.
	
	For asymmetric blow-ups, W. Ao and L. Wang in \cite{ao2014new} introduce a family of blow-up solutions for Toda systems with Dirichlet boundary conditions on a unit ball centered at the origin, exhibiting a single blow-up point at the center. Musso, Pistoia, and J. Wei introduce a so-called ``$k$-symmetric'' property for the planar domain  and construct blow-up solutions for a $SU(N+1)$ Toda system, applying singular perturbation methods in \cite{musso_new_2016}.  Meanwhile, D'Aprile, Pistoia, and Ruiz obtain a similar result to $SU(3)$ Toda system for planar domains with Dirichlet boundary conditions as $\rho_1 \to 8\pi$, while keeping the fixed parameter $\rho_2 \in (4\pi, 8\pi)$ in \cite{D'aprile_asymmetric_2016}. The ``$k$-symmetric'' condition is a technical assumption introduced to ensure non-degeneracy of the limiting linearized problem.
	
	For full blow-ups, the special case where $V_1 = V_2$, $u_1 = u_2$, and $\rho_1 = \rho_2$ reduces the system to a mean field equation. Blow-up solutions for this equation have been constructed in \cite{baraket_construction_1997, Esposito2005} for bounded domains in $\R^2$, in \cite{Bartolucci2020, Esposito2014singular, figueroa2022bubbling} for closed Riemann surfaces, and in \cite{HBA2024} for Riemann  surfaces with boundary.  For the general case, the full blow-up scenario remains largely unresolved. C.-S. Lin, J. Wei, and C. Zhao   analyzed fully blow-up solutions of the $SU(3)$ Toda system  and derived  necessary conditions for their existence, highlighting the substantial difficulties in constructing such solutions in \cite{LWZ12}. Further progress on fully blowup  solutions has been made in subsequent works, see for instance \cite{Ao16, LWZ2016Local,Zhang20} and the references therein.

	To extend the results in \cite{musso_new_2016, D'aprile_asymmetric_2016} to the case of Toda systems of rank $N$ equipped with homogeneous Neumann boundary conditions on Riemann surfaces with boundary, this paper aims to construct blow-up solutions of the  Toda system \eqref{eq:toda_mfN} with $\rho_i \to 2\alpha_i \pi m$  for $i = 1, \dots, N$ on ``$k$-symmetric'' surfaces with $k > \frac{1}{2} \alpha_N $ via singular perturbation methods, where 
	\[ \alpha_i=2i \text{ for }i=1,\dots, N-1, \text{ and }\alpha_N= \begin{cases}
		2 N & \text{ for }  \mathbf{A}_N\text{ or } \mathbf{B}_N \\
		4N-2 & \text{ for } \mathbf{C}_N\\
		8 & \text{ for } \mathbf{G}_2
	\end{cases}.\]
	See  \eqref{def:alpha_i}  in Section \ref{sec:app} for more details.
	
	We denote by $O(3)$,  
	the orthogonal group of degree 
	$3$ acting on  $\mathbb{R}^3$. 
	For any $k \in \mathbb{N}_+$, we define
	\begin{equation*}
		\fR_{k} =
		\begin{pmatrix}
			\cos (2\pi/k) & \sin (2\pi/k) & 0 \\
			-\sin(2\pi/k) & \cos (2\pi/k) & 0 \\
			0 & 0 & 1
		\end{pmatrix}
		\in O(3),
	\end{equation*}
	which represents a rotation by angle $\frac{2\pi} k$ around the $z$-axis.  
	
	\begin{definition}\label{def:surface}
		Let $\Sigma$ be a Riemann surface. For an integer $k\geq 1$, we say $\Sigma$ is {\it $k$-symmetric} if 
		$\Sigma$ can be embedded in $\R^3$ and is invariant under $\fR_{k}$. 
		We say $x$ is a {\it $k$-symmetric center} of $\Sigma$ if $x\in \Sigma_0:=\left\{x\in \Sigma: \fR_{k}^i(x)=x \text{ for any } i=1,2,3,\dots\right\}$.
	\end{definition}
	
	\begin{definition}
		Let $\Sigma$ be a $k$-symmetric surface. A function $f: \Sigma \rightarrow \mathbb{R}$ is said to be \emph{$\fR_k$-invariant} if 
		$
		f(x) = f(\fR_k x)  \text{ for all } x \in \Sigma.
		$
	\end{definition}
	
	For technical reasons, we assume throughout that $\Sigma$ is a $k$-symmetric surface with $k > \frac 1 2 \alpha_N$, which is given by \eqref{def:alpha_i} in Section \ref{sec:app}. Due to the smoothness of the boundary, it follows that 
	$
	\Sigma_0 \cap \partial \Sigma = \emptyset.
	$

	We are now ready to state the main result of this paper.
	\begin{theorem}\label{thm:main_asymmetric}
		Let $k >\frac 1 2 \alpha_N$, where $\alpha_i$ is defined by \eqref{def:alpha_i} for $i=1,\dots, N$, and assume that $\Sigma$ is a $k$-symmetric Riemann surface with smooth boundary. Suppose the potential functions $V_1, \dots, V_N$ are $\fR_k$-invariant, and 
		\[
		\Sigma_0:=\left\{x\in \Sigma: \fR_{k}^i(x)=x \text{ for any } i=1,2,3,\dots\right\}. 
		\]
		Then for any $m$ distinct points   $\xi^*_1, \dots, \xi^*_m $ in  $ \Sigma_0$, there exists a family of solutions $\bu_\varepsilon = (u_{1,\varepsilon}, \dots, u_{N,\varepsilon})$ to the  Toda system \eqref{eq:toda_mfN} such that:
		\begin{itemize}
			\item[i)] $\bu_\varepsilon$ blows up exactly at the points $\xi^*_1, \dots, \xi^*_m$ as $\varepsilon \to 0$;
			\item [ii)]the parameters $\rho^\varepsilon = (\rho_1^\varepsilon, \dots, \rho_N^\varepsilon)$ satisfy
			\[
			\rho^\varepsilon \to (2\alpha_1 \pi m,\dots,  2\alpha_N \pi m);
			\]
			\item[iii)] for each $i = 1, \dots, N$, the following  weak-$*$ convergence holds:
			\[
			\rho_i^\varepsilon \frac{V_i e^{u_{i,\varepsilon}}}{\int_\Sigma V_i e^{u_{i,\varepsilon}}  dv_g \, } dv_g   \stackrel{*}{\rightharpoonup}  \sum_{j=1}^m 2\alpha_i \pi \delta_{\xi^*_j},  \text{ as } \varepsilon \to 0,
			\]
		\end{itemize}
		where $\delta_{x}$ is the Dirac measure on $\Sigma$ concentrated at the point $x.$
	\end{theorem}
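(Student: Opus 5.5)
We construct $\bu_\varepsilon$ by a Lyapunov–Schmidt type reduction carried out in the closed subspace $\mathcal H_k\subset\mathcal H$ of $\fR_k$-invariant functions. Near each $\xi^*_j$ we take isothermal coordinates centered at $\xi_j^*$. Since $\fR_k$ is an isometry fixing $\xi^*_j$ and $\xi^*_j\in\Sigma_0$ is an interior point, these coordinates can be chosen so that $\fR_k$ acts as the planar rotation by $2\pi/k$.

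\textbf{Ansatz.} As building blocks we use the explicit singular Liouville solutions from Section~\ref{sec:app}:
\[
w_i(y)=\log\frac{2\alpha_i^2\delta_i^{\alpha_i}}{(\delta_i^{\alpha_i}+|y|^{\alpha_i})^2},
\qquad -\Delta w_i=|y|^{\alpha_i-2}e^{w_i}, \quad \int|y|^{\alpha_i-2}e^{w_i}=4\pi\alpha_i .
\]
The component $i$ concentrates at rate $\delta_i=\delta_i(\varepsilon)$, with the rates well separated: $\delta_1\gg\delta_2\gg\dots\gg\delta_N$, or the reverse order. The singular weight $|y|^{\alpha_i-2}$ arises because, at scale $\delta_i$, the components already concentrated contribute through the Cartan matrix a logarithmic singularity $-\sum_{j\neq i}a_{ij}(\ldots)$. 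The choice of $\alpha_i$ in \eqref{def:alpha_i} is exactly the one making these weights consistent: $\alpha_i=2+\sum_{j<i}(-a_{ij})\alpha_j$, which gives $\alpha_i=2i$ and the modified last entry for $\mathbf B_N,\mathbf C_N,\mathbf G_2$. Each bubble is cut off and projected, $PW_{i,j}$, onto Neumann zero-average functions using the Neumann Green's function $G(x,\xi_j)$. The approximate solution is
\[
\bW_i=\sum_{j=1}^m\sum_{l}\left(a_{il}\text{-combination of } PW_{l,j}\right),
\]
arranged so that $-\Delta_g\bW_i$ reproduces $\sum_l a_{il}\rho_l(\cdots)$ up to small errors. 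The weights $\rho_i^\varepsilon$ and the parameters $\delta_{i,j}$ are tied through the requirement that the regular parts match, which gives $\rho_i^\varepsilon-2\alpha_i\pi m=O(\text{small})$.

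\textbf{Error and linear theory.} We estimate the error $\cR=(-\Delta_g\bW_i-\text{RHS}_i(\bW))_i$ in a weighted $L^p$ norm, $p>1$ close to $1$, and expect it to be $o(1)$ as a power of the $\delta$'s. The key step, and in my view the main obstacle, is invertibility of the linearized operator $L\phi=-\Delta_g\phi-(\text{linearization of RHS at }\bW)\phi$ on $\fR_k$-invariant functions, with a bound $\|\phi\|\le C|\log\varepsilon|\,\|L\phi\|_{p}$. We prove it by contradiction. We blow up at each scale $\delta_i$ around each $\xi_j^*$; the limits solve $-\Delta\phi=|y|^{\alpha_i-2}e^{w_i}\phi$ in $\R^2$ (bounded) and are $\fR_k$-invariant. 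The kernel of this operator is spanned by $\frac{\delta^{\alpha}-|y|^{\alpha}}{\delta^\alpha+|y|^\alpha}$ and by $\frac{|y|^{\alpha/2}}{\delta^\alpha+|y|^\alpha}\cos(\tfrac{\alpha}{2}\theta)$, $\frac{|y|^{\alpha/2}}{\delta^\alpha+|y|^\alpha}\sin(\tfrac{\alpha}{2}\theta)$, the latter only when $\alpha/2\in\mathbb N$. Invariance under rotation by $2\pi/k$ with $k>\alpha_i/2$ kills the angular modes, so only the radial mode survives. The radial modes are excluded by an orthogonality/Pohozaev-type argument: test against the radial kernel element and use the interaction between consecutive scales, which forces the coefficients at successive levels to vanish inductively in $i$. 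Translation modes do not arise, since $\xi_j^*$ are fixed points of $\fR_k$ and invariance forces vanishing gradients. Hence no reduced finite-dimensional problem in the points is needed, and the only remaining freedom is absorbed by the $\rho_i$ (equivalently $\delta_i$). Near $\partial\Sigma$ the Neumann Green's function is used, and away from the concentration points standard elliptic estimates hold. Since $\Sigma_0\cap\partial\Sigma=\emptyset$, no boundary bubbles appear.

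\textbf{Conclusion.} Given the linear estimate, a contraction mapping in a small ball of $\mathcal H_k$ yields $\phi_\varepsilon$ with $\|\phi_\varepsilon\|\le C|\log\varepsilon|\,\|\cR\|_p=o(1)$. Then $\bu_\varepsilon=\bW+\phi_\varepsilon$ is an $\fR_k$-invariant solution of \eqref{eq:toda_mfN}, since the problem is equivariant because the $V_i$ and $g$ are $\fR_k$-invariant. Properties i)–iii) follow from the explicit ansatz: the masses are $\int|y|^{\alpha_i-2}e^{w_i}/2=2\alpha_i\pi$ per point (with normalization $\rho_i\cdot$), and $\phi_\varepsilon$ is small in $L^\infty$ away from the points.
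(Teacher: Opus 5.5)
Your overall architecture matches the paper's. You use the ansatz $W_{i,\varepsilon}=\sum_j\bigl(PU^i_j+\sum_{i'\neq i}\frac{a_{ii'}}{2}PU^{i'}_j\bigr)$ with $\alpha_i-2=-\sum_{i'<i}a_{ii'}\alpha_{i'}$. You prove invertibility in $\cH_{k}$ by contradiction, with a $|\log\varepsilon|$ loss, and let $k>\alpha_N/2$ kill the angular modes. You finish with a contraction in $\cH_{k}$.

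\textbf{The gap.} It lies in the step you yourself call the main obstacle: excluding the radial mode $z_i=\frac{1-|y|^{\alpha_i}}{1+|y|^{\alpha_i}}$. Testing against the (projected) radial kernel element cannot by itself force $c_{ij}=0$. Since $-\Delta_g PZ_{ij}$ is the linearized weight times $Z_{ij}$ (minus its mean), and $PZ_{ij}=Z_{ij}+1+o(1)$ near $\xi_j$, that test only yields
\[
\int_{\R^2}2\alpha_i^2\frac{|y|^{\alpha_i-2}}{(1+|y|^{\alpha_i})^2}\,\tilde\phi_{ij}\,dy=o\bigl(|\log\varepsilon|^{-1}\bigr).
\]
Its limit, $c_{ij}\int_{\R^2}2\alpha_i^2\frac{|y|^{\alpha_i-2}}{(1+|y|^{\alpha_i})^2}z_i\,dy=0$, holds for every $c_{ij}$, so it carries no information about $c_{ij}$.

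\textbf{The missing idea.} You need a second test function: the projected bubble $PU^i_j$ itself.
\begin{itemize}
\item On the scale $\delta_{i,j}$ one has $PU^i_j\approx-2\alpha_i\log\delta_{i,j}-2\log(1+|y|^{\alpha_i})+\mathrm{const}$.
\item The $\log\delta_{i,j}$ part is neutralized exactly by the $o(|\log\varepsilon|^{-1})$ rate above. This is also where the $|\log\varepsilon|$ in the inverse bound originates.
\item The $-2\log(1+|y|^{\alpha_i})$ part produces $4\pi\alpha_i c_{ij}$.
\item For $i'>i$, the interaction term sees $PU^i_j\approx-2\alpha_i\log(\delta_{i',j}|y|)$ on the scale $\delta_{i',j}$ and contributes $8\pi\alpha_i c_{i'j}$. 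For $i'<i$ it is $o(1)$.
\end{itemize}
This gives the triangular system $c_{ij}+\sum_{i'>i}a_{ii'}c_{i'j}=0$, solved downward from $c_{Nj}=0$. The rate itself must be established first for all $i$, by induction upward in $i$, using $PZ_{ij}\approx 2$ on the smaller scales $\delta_{i',j}$, $i'<i$.

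Relatedly, the radial directions are not ``absorbed by the $\rho_i$''. The $d_{i,j}$ are fixed a priori by matching regular parts, and the radial modes are ruled out entirely inside the linear theory.

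\textbf{Two smaller omissions.}
\begin{itemize}
\item Once all $c_{ij}=0$, you still need $\|\bphi\|\to 0$. For the coupled system the cross terms $\int_\Sigma \chi_j e^{-\varphi_j}|y_{\xi_j}|^{\alpha_{i'}-2}e^{U^{i'}_j}\phi_{i'}\phi_i\,dv_g$ are not controlled automatically. You must decouple the right-hand side with a (rescaled) inverse of the Cartan matrix and use positive definiteness of its symmetrization; the paper does this by Gaussian elimination in Step 4. The same device gives the uniform weighted $L^2$ bound on $\tilde\phi_{ij}$ that your compactness step needs.
\item With your choice $\alpha_i=2+\sum_{j<i}(-a_{ij})\alpha_j$, the scales must satisfy $\delta_1\ll\delta_2\ll\dots\ll\delta_N$. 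The order $\delta_1\gg\dots\gg\delta_N$ is incompatible with it.
\end{itemize}
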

	
	\noindent
	As an immediate consequence, we obtain the following result for the $SU(3)$ Toda system.
	\begin{corollary}
		Let $k > 2$, and suppose that $\Sigma$ is a $k$-symmetric Riemann surface, and the potentials $V_1$ and $ V_2$ are $\fR_k$-invariant. Assume $\Sigma_0 \neq \emptyset$ and $m \leq \# \Sigma_0$, where 
		$\# \Sigma_0$
		denotes the cardinality of $\Sigma_0$. Then,  there exists a family of solutions to the system \eqref{eq:A_2_Toda} with $(\rho_1^{\varepsilon}, \rho_2^\varepsilon) \to (4\pi m, 8\pi m)$ or $(8\pi m, 4\pi m)$ that blows up at exactly $m$ distinct points in $\Sigma_0$, exhibiting asymmetric blow-ups.
	\end{corollary}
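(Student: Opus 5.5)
The corollary follows from Theorem~\ref{thm:main_asymmetric} in the case $N=2$ with Cartan matrix $\mathbf A_2$. The plan is to check that the hypotheses and the parameter values match, to obtain the second ordering of the masses by exchanging the two components, and then to read off the classification of the blow-ups from the local masses.

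\emph{Specialization to $\mathbf A_2$.} By the definition of the $\alpha_i$ recalled before Theorem~\ref{thm:main_asymmetric} (see \eqref{def:alpha_i}), we have $\alpha_1=2$, and since $\mathbf A_N$ is of type $\mathbf A$, $\alpha_2=2N=4$. Hence $k>\frac12\alpha_N$ reads $k>2$, which is exactly the hypothesis of the corollary. With the normalization $|\Sigma|_g=1$ and $(a_{ij})=\mathbf A_2$, system \eqref{eq:toda_mfN} is precisely \eqref{eq:A_2_Toda}.

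\emph{First ordering.} Since $\Sigma_0\neq\emptyset$ and $m\le\#\Sigma_0$, we can pick $m$ distinct points $\xi_1^*,\dots,\xi_m^*\in\Sigma_0$. Theorem~\ref{thm:main_asymmetric} then gives a family $\bu_\varepsilon$ that blows up exactly at these points, with
\[
\rho^\varepsilon\to(2\alpha_1\pi m,\,2\alpha_2\pi m)=(4\pi m,\,8\pi m).
\]
Moreover,
\[
\rho_1^\varepsilon\frac{V_1e^{u_{1,\varepsilon}}}{\int_\Sigma V_1e^{u_{1,\varepsilon}}\,dv_g}\,dv_g\stackrel{*}{\rightharpoonup}4\pi\sum_{j=1}^m\delta_{\xi_j^*},
\qquad
\rho_2^\varepsilon\frac{V_2e^{u_{2,\varepsilon}}}{\int_\Sigma V_2e^{u_{2,\varepsilon}}\,dv_g}\,dv_g\stackrel{*}{\rightharpoonup}8\pi\sum_{j=1}^m\delta_{\xi_j^*}.
\]
Because $\Sigma_0\cap\partial\Sigma=\emptyset$, every $\xi_j^*$ is an interior point, so $\varrho(\xi_j^*)=8\pi$. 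Testing the weak-$*$ convergence against cutoff functions supported in small geodesic balls around each $\xi_j^*$ shows that the local limit masses are
\[
(\sigma_1(\xi_j^*),\sigma_2(\xi_j^*))=(4\pi,8\pi)=\bigl(\tfrac12\varrho(\xi_j^*),\,\varrho(\xi_j^*)\bigr).
\]
This is exactly the asymmetric case in the classification.

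\emph{Second ordering.} System \eqref{eq:A_2_Toda} is invariant under the exchange
\[
(u_1,u_2,\rho_1,\rho_2,V_1,V_2)\mapsto(u_2,u_1,\rho_2,\rho_1,V_2,V_1),
\]
because $\mathbf A_2$ is symmetric under swapping its two indices. The $\fR_k$-invariance hypotheses are preserved by this swap. So we apply the first ordering to the swapped potentials $(V_2,V_1)$ and then exchange the components of the resulting solution. This yields solutions of the original system with $\rho^\varepsilon\to(8\pi m,4\pi m)$ and local masses $(\varrho,\frac12\varrho)$ at each blow-up point, again asymmetric.

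There is no genuine obstacle, since all the analytic content sits in Theorem~\ref{thm:main_asymmetric}. The only points that need care are checking that $\alpha_2=4$ for $\mathbf A_2$, so that the assumption $k>2$ matches, and using $\Sigma_0\cap\partial\Sigma=\emptyset$ to get $\varrho=8\pi$ at the blow-up points.
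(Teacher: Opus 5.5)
Your proposal is correct and is essentially the paper's argument: the paper presents the corollary as an immediate consequence of Theorem~\ref{thm:main_asymmetric} with $N=2$ and Cartan matrix $\mathbf{A}_2$. There $\alpha_1=2$ and $\alpha_2=4$, so $k>\frac12\alpha_N$ becomes $k>2$ and $\rho^\varepsilon\to(4\pi m,8\pi m)$. Your exchange of the two components to get the $(8\pi m,4\pi m)$ case (using the index symmetry of $\mathbf{A}_2$ and swapping $V_1,V_2$) makes explicit a step the paper leaves unstated. The same holds for your reading of the local masses $(\frac12\varrho,\varrho)$ with $\varrho=8\pi$ from part iii) together with $\Sigma_0\cap\partial\Sigma=\emptyset$.
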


	\begin{remark}
		The $k$-symmetry assumption is restrictive, but it includes two standard geometric models: the sphere $\SS^2$ and the upper hemisphere $\SS^2_+$. 
		Our result applies to compact surfaces with boundary (hence including $\SS^2_+$), and the method also allows the case $\partial\Sigma=\emptyset$ (thus covering closed surfaces such as $\SS^2$).
		
		For these two model cases, the blow-up locations of the solutions constructed in Theorem~\ref{thm:main_asymmetric} can be described more explicitly. 
		\begin{itemize}
			\item Let $\Sigma=\SS^2$ and assume that $V_1$ and $V_2$ are constants.
			For any $\xi\in\SS^2$, there exists a family of blow-up solutions to the $SU(3)$ Toda system~\eqref{eq:toda_mfN} as $(\rho_1^\varepsilon,\rho_2^\varepsilon)\to(4\pi,8\pi)$ or $(8\pi,4\pi)$, blowing up at $\xi$.
			Moreover, there exists a family of blow-up solutions as $(\rho_1^\varepsilon,\rho_2^\varepsilon)\to(8\pi,16\pi)$ or $(16\pi,8\pi)$, blowing up at the antipodal pair $\{\xi,-\xi\}$, where $-\xi$ denotes the antipodal point of $\xi$.
			
			\item Let $\Sigma=\SS^2_+$ and assume that $V_1$ and $V_2$ are $\mathfrak{R}_3$-invariant.
			Then there exists a family of blow-up solutions to the $SU(3)$ Toda system~\eqref{eq:toda_mfN} as $(\rho_1^\varepsilon,\rho_2^\varepsilon)\to(4\pi,8\pi)$ or $(8\pi,4\pi)$, blowing up at the north pole $\xi=(0,0,1)$.
		\end{itemize}
		
	\end{remark}

	Following the approach in \cite{D'aprile_asymmetric_2016, musso_new_2016}, we construct blow-up solutions of \eqref{eq:toda_mfN} via singular perturbation methods that combine variational methods with a Lyapunov–Schmidt reduction. One key difference in our setting is that we study the Toda system on surfaces, requiring the use of isothermal coordinates to pull back standard bubbles from the plane. Moreover, the Neumann boundary condition poses additional challenges, as the maximum principle is no longer available. Instead, we employ Green representation formulas and $L^p$-estimates to analyze the projected bubbles.
	
	To perform the finite-dimensional reduction, we compute the kernel of the limiting linearized operator. Due to the higher-order blow-up we consider, the kernel contains non-radial elements that do not align with the tangent space of the approximation manifold. To overcome this, we introduce $k$-symmetry on the surface, inspired by \cite{musso_new_2016}, which addresses invertibility in a $k$-symmetric subspace and implies that  the finite-dimensional part is trivial.
	
	The main analytical difficulty lies in dealing with component interactions and the non-symmetry of the Cartan matrices $\mathbf{B}_N, \mathbf{C}_N$ and $\mathbf{G}_2$. We resolve this using localized estimates in annular regions and treating the last component separately. The resulting solutions exhibit blow-up at $k$-symmetric centers which are interior points only; boundary blow-up remains out of reach due to degeneracy in our setting, and is left for future study.
	
	\section*{Acknowledgements}
	We would like to thank the referee for  useful comments  towards improving the presentation of this paper.  The first-named author also gratefully acknowledges Prof. T. Bartsch and Prof. M. Musso for their insightful discussions.

	\subsection*{Notations}
	Throughout this paper, we use the terms ``sequence'' and ``subsequence'' interchangeably.  The constant denoted by $C$ in our deduction may assume different values across various equations or even within different lines of equations. We adopt the standard asymptotic notation $ \mathcal{O}(1) $ and $ o(1)$ to describe the behavior of  quantities. More precisely, given functions $ g$ and $ f$, the notation $ g = \mathcal{O}(f)$ indicates that $ |g/f| \leq C$ for some constant $ C > 0$, while $ g = o(f)$ means that $ g/f \to 0$ as $ \varepsilon \to 0$.
	We set that $\B_r(\hat{y}):=\{ y\in \R^2: |y-\hat{y}|^2< r^2\}$, $ \B_r= \B_r(\mathbf{0})$, and  
	$\B^+_r=\B_r\cap \{ y_2\geq 0\}$. 
	For any $f\in L^1(\Sigma)$,  $\overline{f}:=\int_{\Sigma} f  dv_g.$ 
	
	\vskip10pt
	The rest of this paper is organized as below. 
	In Section \ref{sec:2}, we present preliminary results, including isothermal coordinates,  Green’s functions, and  approximate solutions. Section \ref{sec:3} is devoted to proving the invertibility of the limiting linearized operator. In Section \ref{sec:4}, we address the nonlinear problem, providing estimates for the error terms and establishing solvability via the contraction mapping principle. Section \ref{sec:5} completes the proof of Theorem \ref{thm:main_asymmetric}. Finally, Appendix \ref{App:A} contains technical estimates, focusing on the asymptotic behavior of the projected bubbles.
	
	\section{Preliminary}\label{sec:2}
	We begin in Section \ref{sec:2} with some basic concepts, where we introduce isothermal coordinates,  Green’s functions related to Neumann boundary conditions, and construct approximate solutions.
	\subsection{Isothermal coordinates and Green functions}
	We begin by  introducing a family of isothermal coordinates (refer to \cite{chern1955, Esposito2014singular,yang2021125440}, for instance). For any $\xi\in\Sigma$,  there exists an isothermal coordinate system $\left(U(\xi), y_{\xi}\right)$ such that $y_{\xi}$  maps an open  neighborhood $U(\xi)$ around $\xi$  onto
	\[ B^{\xi}=\begin{cases}
		\B_{2r_{\xi}} & \text{ for } \xi\in \intsigma\\
		\B_{2r_{\xi}}^+ & \text{ for } \xi\in \Sigma
	\end{cases}, \]
	in which $g=\sum_{i=1}^2 e^{\hat{\varphi}_\xi(y_{\xi}(x))}  \mathrm{d} x^i \otimes \mathrm{d} x^i$ and  $y_{\xi}(\xi)=(0,0)$.  
	Here, the conformal factor  $\hat{\varphi}_\xi:B^\xi\to\R$ is related to the Gaussian curvature $K_g$ and the geodesic curvature $k_g$ of the Riemann surface $(\Sigma, g)$,  by following  equation
	\begin{equation}
		\label{eq:Gauss}
		-\Delta\hat{\varphi}_\xi(y) = 2K_g\big(y^{-1}_\xi(y)\big) e^{\hat{\varphi}_\xi(y)} \quad\text{for all } y\in B^\xi. 
	\end{equation}
	Additionally, for $\xi\in \partial\Sigma$, we have 
	\[ -\partial_{y_2} \hat{\varphi}(y)=2k_g\circ y_{\xi}^{-1}(y)  e^{\hat{\varphi}(y)/ 2}, \text{ for all } y\in B^{\xi}\cap \{ y_2=0\}\]
	and 
	$
	\left(y_{\xi}\right)_*(\nu_g(x))=\left. -e^{ -\frac{\hat{\varphi}_{\xi}(y)}2} \frac {\partial} { \partial y_2 }\right|_{	y=y_{\xi}(x)}.
	$
	For $\xi\in \Sigma$ and $0<r\le 2r_\xi$ we set
	\[
	B_r^\xi := B^\xi \cap\B_r\quad \text{and}\quad U_{r}(\xi):=y_\xi^{-1}(B_{r}^{\xi}).
	\]
	As stated in~\cite{HBA2024}, 
	$y_\xi$ and $\hat{\varphi}_\xi$ are assumed to depend smoothly on $\xi$ in any given isothermal chart. Additionally, $\hat{\varphi}_\xi$ satisfies $\hat{\varphi}_\xi(\mathbf{0})=0$ and $$\nabla\hat{\varphi}_\xi(\mathbf{0})=\begin{cases}
		(0,0)  &\text{ if }\xi\in \intsigma\\
		(0, -2k_g(\xi)) &\text{ if }\xi\in \partial\Sigma
	\end{cases},$$
	where $k_g$ is the geodesic curvature of the boundary $\partial\Sigma$.

	Let $\chi$ be a radial cut-off function in  $C^{\infty}(\mathbb{R}, [0,1])$ such that 
	\begin{equation*}
		\chi(s)=\left\{\begin{array}{ll}
			1,& \text{ if } |s|\leq 1\\
			0, &\text{ if } |s|\geq 2
		\end{array}\right..
	\end{equation*} 
	And for fixed $r_0<\frac  1 4 r_{\xi}$, we denote that  $\chi_{\xi}(x):= \chi( \frac{|y_{\xi}(x)|}{r_0})$ and $\varphi_{\xi}(x):= \hat{\varphi}_{\xi} ( y_{\xi}(x))$. 
	For any $ \xi \in \Sigma $, we define the Green function for the Laplace-Beltrami operator  with homogeneous  Neumann boundary condition by the following equations:
	\begin{equation*}
		\left\{\begin{array}{ll}
			-\Delta_g G^g(x,\xi)  = \delta_{\xi} - 1 & x\in \intsigma \\
			\partial_{ \nu_g } G^g(x,\xi) = 0 & x\in \partial \Sigma \\
			\int_{\Sigma} G^g(x,\xi)\, dv_g(x) = 0
		\end{array}\right.,
	\end{equation*}
	where $ \delta_{\xi} $ is the Dirac mass on $ \Sigma $ concentrated at $ \xi $.
	
	Let the function
	\begin{equation*}
		\Gamma^g_{\xi}(x)=\Gamma^g(x,\xi)= \left\{ \begin{array}{ll}	-\frac{1}{2\pi} \chi_{\xi}(x)\log{|y_{\xi}(x)|}& \text{ if }\xi\in \intsigma \\	-{\frac{1}{\pi}}\chi_{\xi}(x)\log{|y_{\xi}(x)|}	& \text{ if } \xi\in\partial\Sigma \end{array}\right..
	\end{equation*}  
	Decomposing the Green function  
	$ G^g(x,\xi)= \Gamma^g_{\xi}(x)+ H^g_{\xi}(x),$ we have the function $H^g_{\xi}(x):= H^g(x,\xi)$ solves the following equations: 
	\begin{equation}\label{eqR}
		\left\{\begin{array}{ccll}
			-\Delta_g H^g_\xi
			&=&\frac{4}{\varrho(\xi)} (\Delta_g \chi_{\xi}) \log \frac 1 {|y_{\xi}|}+ \frac{8}{\varrho(\xi)} \la\nabla\chi_{\xi}, \nabla\log \frac 1 {|y_{\xi}|}\ra_g 	- 1, & \text{ in }\intsigma\\
			\partial_{ \nu_g} H^g_\xi&=&- \frac{4}{\varrho(\xi)}(\partial_{ \nu_g} \chi_{\xi}) \log\frac{1}{|y_{\xi}|}-\frac{4}{\varrho(\xi)}\chi_{\xi}\partial_{ \nu_g}  \log\frac{1}{|y_{\xi}|}, &\text{ on }\partial \Sigma\\
			\int_{\Sigma} H^g_\xi \,dv_g&=& - \frac{4}{\varrho(\xi)}\int_{\Sigma}  \chi_{\xi}\log\frac{1}{|y_{\xi}|} dv_g &
		\end{array}\right..\end{equation}
	By the regularity of the elliptic equations (refer to \cite{Wehrheim2004,Agmon1959}), there exists a unique smooth solution  $H^g(x,\xi)$, which solves \eqref{eqR} in the H\"{o}lder space $C^{2,\alpha}(\Sigma)$ with $\alpha\in (0,1)$. 
	$H^g(x,\xi)$ is the regular part of  $G^g(x,\xi)$ and 
	$R^g(\xi):=H^g(\xi,\xi)$ is  Robin's function on $\Sigma$. It is clear that  to check  $H^g(\xi,\xi)$ is independent of the choice of the cut-off function $\chi$ and the local chart.
	
	\subsection{Approximation solutions}\label{sec:app}
	
	Assume that  $$ \frac{\rho_1}{\int_{\Sigma} V_1 e^{u_1} dv_g}=\dots= \frac{\rho_N} {\int_\Sigma V_N e^{u_N} dv_g}:=\varepsilon.$$ Then, the system \eqref{eq:toda_mfN} can be reformulated as a system of two coupled Liouville-type equations:
	\begin{equation}\label{eq:todaN}
		\begin{cases}-\Delta_g u_i= \varepsilon\sum_{j=1}^N a_{ij}(  V_ie^{u_j}- \int_\Sigma V_j e^{u_j} dv_g)  & \text { in } \intsigma \\ \partial_{\nu_g}u_i =0 & \text { on } \partial \Sigma \end{cases},
	\end{equation}	
	for $i=1,\dots, N.$
	
	We consider  $\Sigma$ is a ``$k$-symmetric'' surface with  $\Sigma_0\neq\emptyset$ and  $m\leq \#\Sigma_0$. 
	Given $\{\xi^*_1,\dots,\xi^*_m\}\subset \Sigma_0\cap \intsigma$, we will construct a family of solutions blowing up exactly at $\{\xi^*_1,\dots,\xi^*_m\}$. 
	To construct blow-up solutions of \eqref{eq:toda_mfN}, it is sufficient to consider the problem 
	\eqref{eq:todaN}. 
	
	For fixed $\xi^*=(\xi^*_1,\dots, \xi^*_m)\in \Xi_{m}:= \intsigma^m \setminus\{ \xi=(\xi_1,\dots,\xi_m)\in \Sigma^m: \exists \xi_j=\xi_{j'}\text{ for some } j\neq j'\}$, we are going to construct a family of blow-up solutions $u_{\varepsilon}=(u_{1,\varepsilon},\dots,  u_{N,\varepsilon})$ of \eqref{eq:todaN} as $\varepsilon\rightarrow 0$. 
	
	To construct the approximation, we introduce the following singular Liouville equations:  
	\begin{equation}\label{eq:anatz}
		\left\{ \begin{array}{lc}
			-\Delta w=|y|^{\alpha-2} e^{w}& \text{ in }\R^2\\
			\int_{\R^2} |y|^{\alpha-2} e^{w}<\infty&
		\end{array}
		\right.,
	\end{equation}
	where $\alpha\geq 2$.  For any $\alpha\geq 2$,  from the classification from Prajapat and Tarantello in \cite{PT2001}, the radially symmetric solutions of the singular Liouville problem~\eqref{eq:anatz} is as follows:
	\[
	w_{\tau}^{\alpha}(y) := \log \frac{2\alpha^2 \tau^{\alpha}}{(\tau^{\alpha} + |y|^{\alpha})^2} \quad y \in \R^2, \tau > 0.
	\]
	Moreover, we have
	$
	\int_{\R^2} |y|^{\alpha-2} e^{w_{\tau}^{\alpha}(y)}  dy = 4\pi\alpha,
	$
	by direct calculation. 
	For any $\xi\in \Sigma$, applying isothermal coordinate $(y_{\xi}, U(\xi))$, we can pull-back $w^{\alpha}_{\tau}$ to the Riemann surface around $\xi$,
	\[ U^{\alpha}_{\tau,\xi}:= w^\alpha_\tau \circ y_{\xi}(x)=\log \frac{2\alpha^2 \tau^{\alpha}}{(\tau^{\alpha} + |y_{\xi}(x)|^{\alpha})^2} \text{ in } U(\xi).  \]
	Then we project the local bubbles into the functional space $\oH$ by following equations:
	\begin{equation*}
		\left\{\begin{array}{lc}
			-\Delta_gPU^{\alpha}_{\tau,\xi} = \chi_{\xi} e^{-\varphi_{\xi}} |y_{\xi}|^{\alpha-2} e^{U^{\alpha}_{\tau,\xi}}-\overline{\chi_{\xi} e^{-\varphi_{\xi}} |y_{\xi}|^{\alpha-2} e^{U^{\alpha}_{\tau,\xi}}} &\text{ in } \intsigma\\
			\partial_{\nu_g} PU^{\alpha}_{\tau,\xi}=0 &\text{ on } \partial\Sigma\\
			\int_{\Sigma} PU^{\alpha}_{\tau,\xi} dv_g=0&
		\end{array}\right..
	\end{equation*}
	We take  for any $ i\in \{1,\dots, N-1\}$
	\begin{equation}
		\label{def:alpha_i}
		\begin{split}
			\alpha_i&=2i 
			\text{ and }
			\alpha_N= 2-2(N-1)a_{N N-1}=\begin{cases}
				2 N & \text{ for }  \mathbf{A}_N\text{ or } \mathbf{B}_N \\
				4N-2 & \text{ for } \mathbf{C}_N\\
				8& \text{ for } \mathbf{G}_2
			\end{cases},
		\end{split} 
	\end{equation}
	which satisfies that  for any $i=1,\dots, N$, 
	$\alpha_i-2=-\sum_{i'<i} a_{ii'} \alpha_i'. $
	The concentration parameter 
	\begin{equation}
		\label{def:delta_i} \delta_{N}= \varepsilon^{\frac 1 {\alpha_{N}}} \text{ and } 
		\delta_{i}=\begin{cases}
			\varepsilon^{\frac{N+1-i}{\alpha_i}}  & \text{ for } \mathbf{A}_N, \mathbf{C}_N \text{ or } \mathbf{G}_2 \\
			\varepsilon ^{\frac{N+2-i}{\alpha_i}} & \text{ for } \mathbf{B}_N
		\end{cases},
	\end{equation}
	which satisfies that 
	$  \delta_i^{\alpha_i} \Pi_{i'>i}  \delta_{i'}^{ a_{ii'} \alpha_{i'}}= \varepsilon.$ 
	It is easy to see 
	\begin{equation}
		\label{eq:diff_deltai} \frac{\delta_{i-1}}{\delta_i}= \varepsilon^{ \frac{N+1}{2(i-1)i}} \text{ for } i=2, \dots, N-1 \text{ and } 
		\frac{\delta_{N-1}}{\delta_N}=\cO (\varepsilon^{\frac 1 {2(N-1)}}).
	\end{equation}
	Set $\delta_{i,j}:=d_{i,j}\delta_i$ for $i=1,\dots, N$ and $j=1,\dots,m$, 
	where $d_{i,j}>0$ solves the following identities for
	\begin{align}\label{eq:def_d_ij_as}
		&	\alpha_i\log d_{i,j} +\sum_{i'>i} a_{ii'}\alpha_{i'} \log d_{i',j}= -2\log\alpha_i\\
		& +\frac{1}{2}\Big(\alpha_i +\sum_{\sumi}^N \frac {a_{ii'}} 2  \alpha_{i'}\Big)  \cdot \Big(\varrho(\xi_j)R^g(\xi_j)+\sum_{j'\neq j}\varrho(\xi_{j'})G^g(\xi_{j'},\xi_j)\Big)+\log V_i(\xi_j). 
		\nonumber 
	\end{align}
	Let 
	\begin{equation}
		\label{eq:def_A} 
		\cA_{ij}=\left\{x\in\Sigma: \sqrt{\delta_{i,j}\delta_{i-1,j}}\leq |y_{\xi_j}(x)|\leq \sqrt{\delta_{i,j}\delta_{i+1,j}}\right\}, 
	\end{equation}
	for  $i=1,\dots, N$ and $ j=1,\dots , m, $ 	where  $\delta_{0,j}:=0$ and $\delta_{N+1,j}:=\infty$.

	For $\tau_0>0$, the isothermal charts $\{ (y_{\xi_j}, U(\xi_j)):\sum_j  d_g(\xi_j,\xi_j^*)<\tau_0\}$ satisfying that  for some uniform radius $r_0>0$,  $$U_{4r_0}(\xi_j)\subset  U(\xi_j)\subset \subset \Sigma,$$  
	$U_{4r_0}\cap \partial\Sigma=\emptyset$ and 
	$U_{4r_0}(\xi_{j'})\cap U_{4r_0}(\xi_j)=\emptyset$ for all $j\neq j'.$
	
	For simplicity, we define
	$U^i_j= U^{\alpha_i}_{\xi_j,\delta_{i,j}}, PU^i_j=PU^{\alpha_i}_{\xi_j,\delta_{i,j}}, \chi_j=\chi(\frac{|y_{\xi_j}|}{r_0})$ and $\varphi_j=\hat{\varphi}_{\xi_j}(y_{\xi_j})$.
	The approximation solution $\bW_{\varepsilon}=(W_{1,\varepsilon},\dots, W_{N,\varepsilon})$ is defined by 
	\begin{equation*}
		W_{i,\varepsilon}=\sum_{j=1}^m PU^i_j +\sum_{\sumi }^N\sum_{j=1}^m\frac{a_{ii'}}2 PU^{i'}_j,\text{ for } i=1,\dots, N.
	\end{equation*}
	We define that 
	$$\cH_{k}:=\left\{u=(u_1,\dots, u_N)\in \cH: u_i \text{ is } \fR_{k}\text{-invariant for } i=1,\dots, N \right\}. $$
	Next, we are going to construct  solutions with the form
	$ \bu_{\varepsilon}=\bW_{\varepsilon}+\bphi_{\varepsilon},$
	where $\bphi_{\varepsilon}=(\phi_{1,\varepsilon}, \dots,\phi_{N,\varepsilon})\in \cH_{k}$ is the error term. 
	
	\section{The limiting linearized operator}
	\label{sec:3}

	Section \ref{sec:3} focuses on the analysis of the limiting linearized operator and establishes its invertibility in the space $\mathcal{H}_k$. While the general strategy follows the standard approach introduced in \cite{Esposito2005,del_pino_singular_2005}, the argument becomes  simpler in our setting due to the imposed $k$-symmetry of the surface. Nevertheless, for completeness and to clarify where the $k$-symmetric property plays a role, as well as how the interaction between different components is treated, we provide a detailed proof.

	We consider the following linear operator associated with the problem~\eqref{eq:todaN}: 
	\begin{equation}
		\label{eq:linear_op_a}  \cL_{\xi,\varepsilon}(\bphi):=( L^1_{\xi,\varepsilon}(\bphi),\dots, L^N_{\xi,\varepsilon}(\bphi) ),
	\end{equation}
	where for any $i=1, \dots, N $
	\begin{equation*}
		\begin{split}
			L^i_{\xi,\varepsilon}(\bphi):&= -\Delta_g\phi_i -\sum_{j=1}^m\left( \chi_j e^{-\varphi_j} |y_{\xi_j}|^{\alpha_i-2} e^{U^i_j} \phi_i-\overline{\chi_j e^{-\varphi_j} |y_{\xi_j}|^{\alpha_i-2} e^{U^i_j} \phi_i}\right)\\
			&- \sum^N_{\sumi
			} \sum_{j=1}^m\frac{a_{ii'}}2\left( \chi_j e^{-\varphi_j} |y_{\xi_j}|^{\alpha_{i'}-2} e^{U^{i'}_j} \phi_{i'}-\overline{\chi_j e^{-\varphi_j} |y_{\xi_j}|^{\alpha_{i'}-2} e^{U^{i'}_j} \phi_{i'}}\right).
		\end{split}
	\end{equation*}
	The key lemma is  the non-degeneracy of the linear operator $\cL_{\xi,\varepsilon}$. Formally, for $i=1,\dots, N, j=1,\dots,m$, we can derive the local limit operator of $L^i_{\xi,\varepsilon}$  is 
	$$-\Delta \phi -2\alpha_i^2 \frac{|y|^{\alpha_i-2}}{(1+|y|^{\alpha_i})^2}\phi$$ 
	by a proper scaling around $\xi_j$ on an isothermal chart (refer to Lemma~\ref{lem:invertible_as}). Del Pino et al. in \cite{DelPino2012Nondegeneracy} prove that the kernel space is generated by (in polar coordinate $(r,\theta)$) 
	\[\phi^0(r,\theta):=\frac{1-|r|^{\alpha_i}}{1+|r|^{\alpha_i}}, \, \phi^1(r,\theta):=\frac{|r|^{\frac{\alpha_i}{2}}}{1+|r|^{\alpha_i}}\cos \frac{\alpha_i}{2}\theta , \,  \phi^2(r,\theta):=\frac{|r|^{\frac{\alpha_i}{2}}}{1+|r|^{\alpha_i}} \sin \frac{\alpha_i}{2} \theta. \] 
	The $k$-symmetric condition of $\cH_{k}$ with $k>\frac {\alpha_{N}} 2$ excludes $\phi^1$ and $\phi^2$. To obtain the invertibility of the linearized operator $\cL_{\xi,\varepsilon}$, we need to lay out $\phi^0$ by introducing a family of test functions in $\oH$. 
	Let $z_i(y)=\frac {1-|y|^{\alpha_i}}{ 1+|y|^{\alpha_i}}.$
	It is easy to see that 
	$$-\Delta z_i(y)= 2\alpha_i^2 \frac{|y|^{\alpha_i-2}}{(1+|y|^{\alpha_i})^2}z_i(y), y\in\R^2. $$
	For any $j=1,\dots,m$, we define 
	\[ Z_{ij}(x)=\left\{ \begin{array}{ll}
		z_i\left(\frac{|y_{\xi_j}(x)| } {\delta_{i,j}} \right), & x\in U(\xi_j)\\
		0, & x\in \Sigma\setminus U(\xi_j)
	\end{array}\right.. \]
	Then, we project $Z_{ij}$ into the space $\oH$ by following equations: 
	\begin{equation}
		\label{eq:proj_Z}
		\left\{\begin{array}{ll}
			-\Delta_gPZ_{ij}= \chi_j e^{-\varphi_j} |y_{\xi_j}|^{\alpha_i-2} e^{U^i_j}Z_{ij}-\overline{\chi_j e^{-\varphi_j} |y_{\xi_j}|^{\alpha_i-2} e^{U^i_j}Z_{ij}} &\text{  in } \intsigma\\
			\partial_{\nu_g} PZ_{ij}=0 &\text{ on } \partial\Sigma\\
			\int_{\Sigma} PZ_{ij} dv_g=0&
		\end{array}\right..
	\end{equation}
	We define that 
	$$\fL^{0}_{k}:=  \left\{ h=(h_1,\dots, h_N):   \int_{\Sigma} h_i  dv_g =0\text{ and } h_i \text{ is }\fR_{k}\text{ invariant} \right\}. $$
	\begin{lemma}
		\label{lem:invertible_as}
		For any $p>1$, 
		there exist $\varepsilon_0 > 0$ and $C > 0$ such that for any $\varepsilon \in (0, \varepsilon_0)$,  $\bh=(h_1,\dots, h_N) \in (L^p(\Sigma))^N\cap\fL^0_{k}$ there exists    $\bphi=(\phi_1,\dots,  \phi_N) \in (W^{2,p}(\Sigma))^N \cap \cH_{k}$ as a unique solution of 
		\begin{equation}
			\label{eq:linear_key_a} \left\{ \begin{array}{ll}
				\cL_{\xi,\varepsilon}(\bphi)=\bh & \text{ in }\intsigma\\
				\partial_{\nu_g} \bphi =0 & \text{ on  }\partial\Sigma\\
				\int_{\Sigma}\bphi dv_g=0 
			\end{array}\right.,
		\end{equation}
		satisfying that  
		$$
		\|\bphi\| \leq C |\log \varepsilon| \|\bh\|_p,
		$$
		where $\|\bphi\|:= \sqrt{\sum_i \|\phi_i\|^2} \text{ and } \|\bh\|_p=\sum_i \|h_i\|_{p}$.
	\end{lemma}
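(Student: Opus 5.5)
We argue in the standard way of \cite{Esposito2005,del_pino_singular_2005}: first we prove the a priori estimate $\|\bphi\|\le C|\log\varepsilon|\,\|\bh\|_p$ for every solution $\bphi\in (W^{2,p})^N\cap\cH_k$ of \eqref{eq:linear_key_a}, and then we obtain existence and uniqueness from the Fredholm alternative. The operator $\cL_{\xi,\varepsilon}$ is the operator $-\Delta_g$ on $\cH_k$ (with Neumann condition and zero mean) plus a compact perturbation. Since all the data ($\Sigma$, the charts around $\xi_j\in\Sigma_0$, the $V_i$, the bubbles) are $\fR_k$-invariant, the operator maps $\cH_k$ into $\fL^0_k$. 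Hence injectivity, which follows from the a priori bound, gives solvability.

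The a priori bound is proved by contradiction in two stages.

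\textbf{First stage.} We show the weaker estimate $\|\bphi\|\le C\big(\|\bh\|_p+\sum_{i,j}|c_{ij}|\big)$, where $c_{ij}$ measures the projection of $\phi_i$ onto $PZ_{ij}$. Suppose there are $\varepsilon_n\to0$ and solutions with $\|\bphi_n\|=1$ while $\|\bh_n\|_p\to0$ and the projections tend to $0$. We rescale $\tilde\phi_{i,j,n}(y)=\phi_{i,n}(y_{\xi_j}^{-1}(\delta_{i,j}y))$ on the annular region $\cA_{ij}$, blown up to $\R^2$.

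For the cross terms with $i'\neq i$, the weights $|y|^{\alpha_{i'}-2}e^{U^{i'}_j}$ concentrate at scale $\delta_{i'}$. By \eqref{eq:diff_deltai} this scale is either much smaller or much larger than $\delta_i$, so after rescaling these terms vanish or become negligible in $L^p_{loc}(\R^2\setminus\{0\})$. This is where the localized annular estimates are used, and we treat the last component $i=N$ (the one affected by the non-symmetric entries of $\mathbf B_N$, $\mathbf C_N$, $\mathbf G_2$) separately. Local $H^1$ bounds, which follow from testing the equation against $\phi_i$, give weak limits $\tilde\phi_{ij}$ solving
\[
-\Delta\tilde\phi=2\alpha_i^2\frac{|y|^{\alpha_i-2}}{(1+|y|^{\alpha_i})^2}\tilde\phi
\]
with finite Dirichlet energy. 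The limit is $\fR_k$-invariant. Since $k>\alpha_N/2\ge\alpha_i/2$, the modes $\cos\frac{\alpha_i}{2}\theta$ and $\sin\frac{\alpha_i}{2}\theta$ are not invariant under rotation by $2\pi/k$. By \cite{DelPino2012Nondegeneracy} we therefore get $\tilde\phi_{ij}=c\,z_i$, and the orthogonality-type information forces $c=0$.

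It remains to show that $\|\phi_{i,n}\|\to0$, which contradicts $\|\bphi_n\|=1$. For this we test the equation against $\phi_{i,n}$. The weighted term $\int|y|^{\alpha_i-2}e^{U^i_j}\phi_i^2$ tends to $0$ by the local convergence together with uniform decay of the weight in the rescaled variables. Away from the points $\xi_j$ we use the Green representation formula and $L^p$ estimates; this replaces the maximum principle, which is not available under the Neumann condition.

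\textbf{Second stage.} This stage removes the $c_{ij}$ and produces the $|\log\varepsilon|$ loss. We decompose $\phi_i=\psi_i+\sum_j c_{ij}PZ_{ij}$. We test the equation against the projected radial kernel elements, corrected by a suitable multiple of $PZ_{ij}$ (or, as in \cite{Esposito2005}, against $PU^i_j$-type test functions built from $\partial_\tau w^{\alpha_i}_\tau$). The main expansion needed is
\[
PZ_{ij}=Z_{ij}+1+\cO(\delta_{i,j}^{\alpha_i}|\log\delta_{i,j}|)
\]
away from the concentration scale, which follows from the Appendix-type estimates on projected bubbles. Evaluating the resulting identities, the pairing of $PZ_{ij}$ against the operator is of order $1/|\log\delta_{i,j}|$. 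This yields $|c_{ij}|\le C|\log\varepsilon|(\|\bh\|_p+o(1)\|\bphi\|)$, and inserting this into the first stage gives the stated bound.

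\textbf{Main obstacle.} We expect the hardest step to be the second stage in the presence of the inter-component coupling. The cross terms $\frac{a_{ii'}}{2}|y|^{\alpha_{i'}-2}e^{U^{i'}_j}\phi_{i'}$ tested against $PZ_{ij}$ must be shown to be $o(1/|\log\varepsilon|)$ times $\|\bphi\|$. This requires using the scale separation \eqref{eq:diff_deltai} quantitatively on the regions $\cA_{i'j}$, together with the lower-triangular structure $\alpha_i-2=-\sum_{i'<i}a_{ii'}\alpha_{i'}$. The plan is to handle the components in increasing order of $i$, and to handle $N$ last because its coupling is asymmetric.
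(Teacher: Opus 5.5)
Your first stage is essentially Steps 1 and 4 of the paper's proof: rescaling on the annuli $\cA_{ij}$, nondegeneracy together with $k>\alpha_N/2$ to exclude $\cos\frac{\alpha_i}{2}\theta$ and $\sin\frac{\alpha_i}{2}\theta$, and then an energy identity, with the radial coefficient killed by your orthogonality hypothesis. Even there something is missing. Write $w_{ij}:=\chi_je^{-\varphi_j}|y_{\xi_j}|^{\alpha_i-2}e^{U^i_j}$. Testing the $i$-th equation against $\phi_i$ alone leaves mixed terms $\int_\Sigma w_{i'j}\phi_{i'}\phi_i\,dv_g$. These neither bound $\int_\Sigma w_{ij}\phi_i^2\,dv_g$ nor vanish automatically. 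The paper instead tests every equation against every $\phi_l$ and uses the invertibility of the Cartan matrix, both for \eqref{eq:bouned_exp_phi_a} and for the decoupling \eqref{eq:lem3.1-10}.

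The genuine gap is the second stage, which carries the whole content of the lemma because no orthogonality is imposed. First, $PZ_{ij}$ is not an approximate kernel element. We have $-\Delta_gPZ_{ij}=w_{ij}Z_{ij}$ up to a constant, while $PZ_{ij}=Z_{ij}+1+o(1)$ (Lemma \ref{lem:extension_PZ_as}). So if $\bphi$ has $PZ_{ij}$ in the $i$-th slot and zeros elsewhere, then $L^i_{\xi,\varepsilon}(\bphi)\approx-w_{ij}$ up to an additive constant. Its pairing with $PZ_{ij}$ is therefore $\approx-4\pi\alpha_i$, not $\cO(|\log\delta_{i,j}|^{-1})$, and its $L^p$ norm is of order $\delta_{i,j}^{-2(p-1)/p}$. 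Consequently, if you apply the first-stage estimate to $\psi$, as in the del Pino--Kowalczyk--Musso scheme, the right-hand side contains $c_{ij}$ times a term that is unbounded in $L^p$. That scheme works only because it uses a scale-invariant weighted sup norm in which the bubble weight is $\cO(1)$. Second, your bound $|c_{ij}|\le C|\log\varepsilon|(\|\bh\|_p+o(1)\|\bphi\|)$ can be absorbed into the first stage only if that $o(1)$ is $o(|\log\varepsilon|^{-1})$. Nothing in your sketch produces this.

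What is missing is the mechanism that actually detects the radial coefficient. The paper obtains it inside a single contradiction argument with $\|\bphi_n\|=1$ and $|\log\varepsilon_n|\,\|\bh_n\|_p\to0$, with no intermediate estimate.

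\emph{Step 2.} One tests with $PZ_{ij}$ and subtracts $\la\phi_i,PZ_{ij}\ra=\int_\Sigma w_{ij}Z_{ij}\phi_i\,dv_g$. What remains is $\int_\Sigma w_{ij}\phi_i(PZ_{ij}-Z_{ij})\,dv_g$, which gives $\int_{\R^2}\frac{2\alpha_i^2|y|^{\alpha_i-2}}{(1+|y|^{\alpha_i})^2}\tilde\phi_{ij}\,dy=o(|\log\varepsilon|^{-1})$. The cross terms are not small by themselves. For $i'<i$ one has $PZ_{ij}\approx2$ on the scale $\delta_{i',j}$, so the cross term is twice the same quantity for the index $i'$, and one argues by induction upward in $i$.

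\emph{Step 3.} One tests with the projected bubble $PU^i_j$ itself; note that $\partial_\tau w^{\alpha_i}_\tau$ produces $Z_{ij}$, not $PU^i_j$. By Lemma \ref{lem:extension_PU_as}, on the bubble scale $PU^i_j=-2\alpha_i\log\delta_{i,j}-2\log(1+|y|^{\alpha_i})+\frac{\alpha_i\varrho(\xi_j)}{2}R^g(\xi_j)+o(1)$. The large constant multiplies the quantity just shown to be $o(|\log\varepsilon|^{-1})$, and this is exactly where the $|\log\varepsilon|$ loss originates. The remaining terms are evaluated with $\int_{\R^2}\frac{2\alpha_i^2|y|^{\alpha_i-2}}{(1+|y|^{\alpha_i})^2}z_i\log(1+|y|^{\alpha_i})\,dy=-2\pi\alpha_i$. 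For the cross terms with $i'>i$ one uses $\int_{\R^2}\frac{2\alpha_{i'}^2|y|^{\alpha_{i'}-2}}{(1+|y|^{\alpha_{i'}})^2}z_{i'}\log|y|\,dy=-4\pi$. Together these turn the tested identities into $c_{ij}+\sum_{i'>i}a_{ii'}c_{i'j}=0$. This is a triangular system solved downward from $i=N$, the reverse of the order used in the $PZ_{ij}$ step and of the order you propose. Only after this does the energy argument of your first stage produce the contradiction.
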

	
	\begin{proof}
		We prove it by contradiction. Suppose Lemma~\ref{lem:invertible_as} fails, i.e. there exist $p>1$ and a sequence of $\varepsilon_n\rightarrow 0$ and $\bh_n:=(h_{1,n},\dots,  h_{N,n})\in (L^p(\Sigma))^N\cap \fL^0_{k}$ and $\bphi_n:=(\phi_{1,n},\dots, \phi_{N,n})\in  (W^{2,p}(\Sigma))^N\cap \cH_{k}$ solves ~\eqref{eq:linear_key_a} for $\bh_n$ satisfying  
		\[ \|\bphi_n\|=1\text{ and } |\log \varepsilon_n|\|\bh_n\|_p:=|\log\varepsilon_n|\sum_{i=1}^N \|h_{i,n}\|_{p}\rightarrow 0, \]
		as $n\rightarrow +\infty.$
		For simplicity, we still use the notations $\phi_i,$ $ h_i,$ $\varepsilon$ instead of $\phi_{i,n},$ $h_{i,n},$ $ \varepsilon_n$ for $i=1,\dots, N$. 
		We define that for $i=1,\dots, N, j=1,\dots,m$
		\[ \tilde{\phi}_{ij}(y)= \left\{\begin{aligned}
			&	\chi\left( \frac { \delta_{i,j}|y|}{r_0}\right)	\phi_i\circ y_{\xi_j}^{-1}(\delta_{i,j} y),&  &y \in \Omega_{ij}:= \frac{1}{\delta_{i,j}} B^{\xi_j}\\
			&	0& & y\in\R^2\setminus\Omega_{ij}
		\end{aligned}\right.. \]
		Then we consider the following spaces for  $\alpha\geq 2 $ and $\xi\in\Sigma$,
		$$ \rL^{\alpha}:=\Big\{ u: \Big\| \frac{|y|^{\frac{\alpha-2}{2}}}{1+|y|^{\alpha}} u \Big\|_{2} <+\infty\Big\}$$
		and 
		$$ \rH^{\alpha}:=\Big\{u: \|\nabla u\|_{2}+\Big\| \frac{|y|^{\frac{\alpha-2}{2}}}{1+|y|^{\alpha}} u \Big\|_{2}<+\infty \Big\}.$$
		\begin{itemize}
			\item[{\bf Step 1.}]\label{item:step1_a} {\it $\tilde{\phi}_{ij}\rightarrow c_{ij} \frac{1-|y|^{\alpha_i}}{1+|y|^{\alpha_i}}$ as $\varepsilon\rightarrow 0$ for some $c_{ij}\in\R$, which is weakly in $\rH^{\alpha_i}$ and strongly in $\rL^{\alpha_i}$.}
		\end{itemize}
		We fix an arbitrary $l=1,\cdots, N$. 
		The Sobolev inequality and H\"{o}lder's inequality yield that
		\begin{equation}\label{eq:h_phi}
			\Big| \int_{\Sigma} h_i \phi_l\Big|\leq \|\bh\|_p \|\phi_l\|_{p'}\leq \|\bh\|_p \|\phi_l\|=o\Big(|\log\varepsilon|^{-1}\Big)\rightarrow 0,
		\end{equation} 
		where $p, p'>1$ with $\frac 1 p+\frac 1 {p'}=1. $ 
		Moreover, $|\la \phi_i, \phi_l\ra|\leq C \|\phi_i\|\|\phi_l\|=\cO(1)$.

		Integrating \eqref{eq:linear_key_a} with test function $\phi_l$, by \eqref{eq:h_phi} and the definition $(a_{ij})$, we inductively
		obtain that  for $ i=1,\dots, N-1 $
		\begin{align}\label{eq:lem3.1-8}
			\sum_{j=1}^m \int_{\Sigma} \chi_je^{-\varphi_j} |y_{\xi_j}|^{\alpha_{i}-2} e^{U^{i}_j} \phi_i  \phi_l dv_g&= i \sum_{j=1}^m \int_{\Sigma} \chi_je^{-\varphi_j} |y_{\xi_j}|^{\alpha_{1}-2} e^{U^{1}_j} \phi_1  \phi_l dv_g +\cO(1),  \\
			\sum_{j=1}^m \int_{\Sigma} \chi_je^{-\varphi_j} |y_{\xi_j}|^{\alpha_{N}-2} e^{U^{N}_j} \phi_N  \phi_l dv_g&= \frac{N}{-a_{N-1 N}}\sum_{j=1}^m \int_{\Sigma} \chi_je^{-\varphi_j} |y_{\xi_j}|^{\alpha_{1}-2} e^{U^{1}_j} \phi_1  \phi_l dv_g
			\nonumber	\\
			&	+\cO(1). \nonumber
		\end{align}
		Using $\phi_l$  as a test function of 
		$L^N_{\xi,\varepsilon}(\bphi)=h_N$, we derive that 
		\begin{align}\label{eq:lem3.1-9}
			\cO(1)&= 	2\sum_{j=1}^m  \int_{\Sigma} \chi_je^{-\varphi_j} |y_{\xi_j}|^{\alpha_{N}-2} e^{U^{N}_j} \phi_N  \phi_l dv_g\\
			&\quad +  a_{N N-1}\sum_{j=1}^m \int_{\Sigma} \chi_je^{-\varphi_j} |y_{\xi_j}|^{\alpha_{N-1}-2} e^{U^{N-1}_j} \phi_{N-1}  \phi_l dv_g\nonumber
			\\
			&\stackrel{\eqref{eq:lem3.1-8}}{=}\frac{2N - a_{N-1 N} a_{N N-1} (N-1)}{-a_{N-1 N}}  \sum_{j=1}^m \int_{\Sigma} \chi_je^{-\varphi_j} |y_{\xi_j}|^{\alpha_{1}-2} e^{U^{1}_j} \phi_{1}  \phi_l dv_g+\cO(1).\nonumber
		\end{align}
		Since  $2N - a_{N-1 N} a_{N N-1} (N-1)\neq 0$ for all $N\geq 2$, we deduce that  
		\begin{align}	\label{eq:bouned_exp_phi_a}
			\sum_{j=1}^m \int_{\Sigma} \chi_je^{-\varphi_j} |y_{\xi_j}|^{\alpha_{l}-2} e^{U^{l}_j} \phi_l^2dv_g&=   l \int_{\Sigma} \chi_je^{-\varphi_j} |y_{\xi_j}|^{\alpha_{1}-2} e^{U^{l}_j} \phi_{1}  \phi_l dv_g \\
			&\stackrel{\eqref{eq:lem3.1-9}}{=}\cO(1),\nonumber
		\end{align}
		for $l=1,\dots, N.$
		By a straightforward calculation, for all $i=1,\dots, N$ 
		\[\sum_{j=1}^m \int_{\R^2}\frac{|y|^{\alpha_i-2}}{(1+|y|^{\alpha_i})^2}(\tilde{\phi}_{ij}(y))^2dy  =	\sum_{j=1}^m \int_{\Sigma} \chi_j e^{-\varphi_j} |y_{\xi_j}|^{\alpha_i-2} e^{U^i_j} \phi_i^2  dv_g\stackrel{\eqref{eq:bouned_exp_phi_a}}{=}\mathcal{O}(1).\]
		Additionally, by the assumption $\|\bphi\|=1$, we immediately  have 
		\[ \int_{\R^2} |\nabla \tilde{\phi}_{ij}|^2 \leq \int_{\Sigma} |\nabla\phi_i|_g^2  dv_g\leq 1.  \]
		It follows that $\tilde{\phi}_{ij}$ is uniformly bounded in $\rH^{\alpha_i}$. Applying~\cite[Proposition 6.1]{GP2013_Sinh},  $\rL^{\alpha_i}\hookrightarrow \rH^{\alpha_i}$ is {a compact embedding.}  Then  up to a subsequence, we have
		$\tilde{\phi}_{ij}  \rightarrow \tilde{\phi}_{ij}^0 $
		weakly convergent in $\rH^{\alpha_i}$ and strongly in $\rL^{\alpha_i} $. 
		For any $q>1$, 
		\begin{equation}
			\label{eq:L_p_exp_a}
			\int_{\Sigma} \left|\chi_j e^{-\varphi_j} |y_{\xi_j}|^{\alpha_i-2} e^{U^i_j}  \right|^q dv_g = \int_{U_{2r_0}(\xi_j)}\chi^q\Big(\frac{|y|}{r_0}\Big) \delta_{i,j}^{2-2q} \frac{ |y|^{q(\alpha_i-2)}}{(1+|y|^{\alpha_i})^2} dy =\mathcal{O}(\delta_{i,j}^{2(1-q)}). 
		\end{equation}
		By changing variables of~\eqref{eq:linear_key_a}, the estimate \eqref{eq:L_p_exp_a} implies  that 
		\begin{equation}\label{eq:lin_pre_limit}
			\left\{    \begin{array}{ll}
				-\Delta\tilde{\phi}_{ij}(y) = 2\alpha^2_i \frac{|y|^{\alpha_i-2}}{( 1+|y|^{\alpha_i})^2}\tilde{\phi}_{ij}+\psi_{ij} & \text{ in } \Omega_{ij}\\
				\tilde{\phi}_{ij} =0&\text{ on } \partial \Omega_{ij}
			\end{array}\right.,
		\end{equation}
		where 	for $q\in (1,\frac 4 3)$,
		\begin{align*}
			\psi_{ij}&= - \sum_{\sumi}^N 2\alpha_{i'}^2 \frac {a_{ii'}} 2 \frac{\delta_{i,j}^{\alpha_{i'}}\delta_{i',j}^{\alpha_{i'}}|y|^{\alpha_{i'}-2}}{(\delta_{i',j}^{\alpha_{i'}}+\delta_{i,j}^{\alpha_{i'}}|y|^{\alpha_{i'}})^2}\tilde{\phi}_{i'j}\Big( \frac{\delta_{i,j}}{\delta_{i',j}} y\Big)+\delta_{i,j}^2 h_i\circ y_{\xi_j}^{-1}(\delta_{i,j} y)\\
			&+ \mathcal{O}(\delta_{i,j}^2\sum_{i'=1}^N \delta_{i',j}^{\frac 2 q-2}),
		\end{align*}

		Next, we are going to show that 
		$\tilde{\phi}_{ij}$ converges to the solution of 
		\begin{equation}
			\label{eq:limit_linear_a}
			\left\{\begin{array}{ll}
				-\Delta \phi=2 \alpha_i^2 \frac{|y|^{\alpha_i-2}}{\left(1+|y|^{\alpha_i}\right)^2} \phi & \text { in }\R^2\\
				\int_{\R^2}|\nabla \phi(y)|^2 d y<+\infty &
			\end{array}\right.,
		\end{equation}
		in the distribution sense. 
		We take an  arbitrary $\varphi\in C^{\infty}_c(\R^2)$
		with  $\supp(\varphi)\subset \frac{\cA_{ij}}{\delta_{i,j}}=\left\{y\in \Omega_{ij}:  \sqrt{\delta_{i-1,j}/\delta_{i,j}}\leq |y|< \sqrt{\delta_{i+1,j}/\delta_{i,j}}\right\},$
		for $\varepsilon>0$ sufficiently small,	where $\cA_{ij}$ is defined by~\eqref{eq:def_A},  $\delta_{0,j}=0$ and $\delta_{N+1,j}=+\infty.$ \\
		For any $i\neq i'$ , 
		by direct calculation, it holds: 
		\begin{align}\label{eq:lem3.1-1}
			\int_{\cA_{ij}}\Big|\frac{\delta_{i',j}^{\alpha_{i'}}|y|^{\alpha_{i'}-2}}{(\delta_{i',j}^{\alpha_{i'}}+ |y|^{\alpha_{i'}})^2 }  \Big|^p  dy &= 
			\int_{ \frac 1 {\delta_{i',j}} \cA_{ij}} \delta_{i', j}^{2-2p}   \Big|\frac{|y|^{\alpha_{i'}-2}}{(1+ |y|^{\alpha_{i'}})^2 }  \Big|^p  dy\\
			&= \begin{cases}
				\cO \Big( \int_{|y|\leq \frac{\sqrt{\delta_{i,j}\delta_{i+1,j}}}{\delta_{i',j}} }  \delta_{i',j}^{2-2p} |y|^{(\alpha_{i'}-2)p} dy \Big)	   & \text{ if }i'> i\\
				\cO\Big( \int_{|y|\geq  \frac{\sqrt{\delta_{i,j}\delta_{i-1,j}}}{\delta_{i',j}}} \delta_{i',j}^{2-2p} |y|^{-(\alpha_{i'}+2)p}dy\Big)    & \text{ if }i'<i
			\end{cases} \nonumber\\ 
			&=\begin{cases}
				\cO(\delta_{i+1}^{1-p}\delta_{i}^{1-p}( \delta_{i}/\delta_{i+1})^{\frac{\alpha_{i'}p}{2}})	 & i'> i\\
				\cO( \delta_{i-1}^{1-p}\delta_{i}^{1-p}( \delta_{i-1}/\delta_i)^{\frac{\alpha_{i'}p}{2}})	& i'<i
			\end{cases}\stackrel{\eqref{eq:diff_deltai}}{=} o(1),\nonumber
		\end{align}
		for $p\in (1,2)$ sufficiently close to $1.$
		Applying $\varphi$ as a test function for~\eqref{eq:lin_pre_limit}, we deduce that 
		\begin{align*}
			&\quad 	\int_{\supp(\varphi)} \nabla \tilde{\phi}_{ij}\cdot\nabla \varphi - 2\alpha_i^2 \int_{\supp(\varphi)} \frac{|y|^{\alpha_i-2}}{(1+|y|^{\alpha_i})^2} \tilde{\phi}_{ij} \varphi = \int_{\supp(\varphi)} \psi_{ij} \varphi\\
			&= \mathcal{O}\Big(\int_{\cA_{ij}}  \sum_{\sumi}^N \frac {-a_{ii'}} 2 2\alpha_{i'}^2 e^{-\varphi_j} \frac{\delta_{i',j}^{\alpha_{i'}} |y_{\xi_j}|^{\alpha_{i'}-2}}{(\delta_{i',j}^{\alpha_{i'}}+|y_{\xi_j}|^{\alpha_{i'}})^2}\phi
			_{i'}\circ y_{\xi_j}^{-1}(y) \varphi(\delta_{i,j}^{-1} y)  dy\Big) \\
			&\quad+\int_{ \delta_{i,j}\supp(\varphi)} h_i\circ y_{\xi_j}^{-1}(y)  \varphi(\delta_{i,j} ^{-1}y) dy+ o(1)\\
			&= \mathcal{O}\Big(\Big(\int_{ \sqrt{\delta_{i-1,j}\delta_{i,j}}\leq |y|< \sqrt{\delta_{i,j}\delta_{i+1,j}}} \Big(\sum_{\sumi}^N  \frac{2\alpha_{i'}^2 \delta_{i',j}^{\alpha_{i'}} |y|^{\alpha_{i'}-2}}{(\delta_{i',j}^{\alpha_{i'}}+|y|^{\alpha_{i'}})^2}\Big)^q dy\Big)^{\frac 1 q }\|\phi_{i'}\|\Big) \\
			&\quad+\mathcal{O}\Big(  \Big(\int_{ \delta_{i,j}\supp(\varphi)}|  \varphi(\delta^{-1}_{i,j} y)|^{p'} dy \Big)^{\frac 1 {p'}} \|h_i\|_{p} \Big)+ o(1)\stackrel{\eqref{eq:lem3.1-1}\text{ and }\eqref{def:delta_i}}{=}o(1),
		\end{align*}
		where $\frac 1 p+\frac 1 {p'}=1, q>1$ sufficiently close to $1$. 
		Since $\cA_{ij}$ invades the whole space $\R^2$ as $\varepsilon\to 0$, we obtain that 
		$\phi^0_{ij}$ solves \eqref{eq:limit_linear_a} with $i=1$ in sense of distribution on  $\R^2$ and  $\phi^0_{ij}$ solves \eqref{eq:limit_linear_a} with $i\in \{ 2, \dots, N\}$ in sense of distribution  on $\R^2\setminus\{ 0\}$. 
		According to the regularity theory,
		$\phi^0_{ij}$ is a smooth solution on the whole space $\R^2$ for any $i=1,\dots, N, j=1,\dots,m.$
		The result in  \cite{DelPino2012Nondegeneracy} implies that the solution space of \eqref{eq:limit_linear_a} is generated by the following functions (in polar coordinate $(r,\theta)$)
		\[ \phi^0(r,\theta):=\frac{1-|r|^{\alpha_i}}{1+|r|^{\alpha_i}}, \,  \phi^1(r,\theta):=\frac{|r|^{\frac{\alpha_i}{2}}}{1+|r|^{\alpha_i}}\cos \frac{\alpha_i}{2} \theta, \,  \phi^2(r,\theta):=\frac{|r|^{\frac{\alpha_i}{2}}}{1+|r|^{\alpha_i}} \sin \frac{\alpha_i}{2} \theta.
		\]
		Observe that  $\phi^0$ is radial and for any $c_1, c_2\in \R$, 
		\[ c_1\phi^1(r, \theta)+c_2\phi^2(r,\theta)= \sqrt{
			c_1^2+c_2^2} \frac{|r|^{\frac{\alpha_i}{2}}}{1+|r|^{\alpha_i}}\sin \Big( \frac{\alpha_i}{2}\theta+ \theta_0\Big),\]
		where $\theta_0\in \R. $
		Since $\bphi\in \cH_{k}$ with $k> \frac{\alpha_N}{2}$, we have $c_1=c_2=0.$ It follows that 
		$ \tilde{\phi}_{ij} ^0 =c_{ij} \frac{1-|y|^{\alpha_i}}{1+|y|^{\alpha_i}}$ for some $c_{ij}\in\R$.\par 
		\begin{itemize}
			\item[{\bf Step 2.}]\label{item:step2_a}  	{\it For $i=1, \dots, N ,j=1,\dots
				,m, $    $\int_{\Omega_{ij}} 2\alpha_i^2 \frac{ |y|^{\alpha_i-2}}{(1+|y|^{\alpha_i})^2}\tilde{\phi}_{ij}(y) dy =o(|\log\varepsilon|^{-1}).$ }
		\end{itemize}
		Applying that $PZ_{ij}$ as a test function of~\eqref{eq:linear_key_a}, we have
		\begin{align*}
			\la \phi_i, PZ_{ij}\ra &=\int_{\Sigma} 2 \alpha_i^2 \chi_je^{-\varphi_j}  \frac{\delta_{i,j}^{\alpha_i}|y_{\xi_j}|^{\alpha_i-2}}{\left(\delta_{i,j}^{\alpha_i}+|y_{\xi_j}|^{\alpha_i}\right)^2} \phi_i PZ_{ij} dv_g \\
			&\quad+ \sum_{\substack{i'=1 \\i' \neq i}}^N \frac{a_{ii'}}2\int_{\Sigma} 2 \alpha_{i'}^2 \chi_je^{-\varphi_j}  \frac{\delta_{i',j}^{\alpha_{i'}}|y_{\xi_j}|^{\alpha_{i'}-2}}{\left(\delta_{i',j}^{\alpha_{i'}}+|y_{\xi_{i'}}|^{\alpha_{i'}}\right)^2} \phi_{i'}PZ_{ij} dv_g +\int_{\Sigma}h_i PZ_{ij}dv_g. 
		\end{align*}
		It follows that 
		\begin{align}\label{eq:lem3.1-2}
			0&=\int_{\Sigma} 2 \alpha_i^2 \chi_je^{-\varphi_j}  \frac{\delta_{i,j}^{\alpha_i}|y_{\xi_j}|^{\alpha_i-2}}{\left(\delta_{i,j}^{\alpha_i}+|y_{\xi_j}|^{\alpha_i}\right)^2} \phi_i\left( PZ_{ij}-Z_{ij}\right)  dv_g \\
			&\quad+ \sum_{\substack{i'=1 \\i' \neq i}}^N \frac{a_{ii'}}2\int_{\Sigma} 2 \alpha_{i'}^2 \chi_je^{-\varphi_j}  \frac{\delta_{i',j}^{\alpha_{i'}}|y_{\xi_j}|^{\alpha_{i'}-2}}{\left(\delta_{i',j}^{\alpha_{i'}}+|y_{\xi_{i'}}|^{\alpha_{i'}}\right)^2} \phi_{i'}PZ_{ij} dv_g +\int_{\Sigma}h_i PZ_{ij}dv_g.\nonumber
		\end{align}
		By Lemma~\ref{lem:extension_PZ_as},  we have $\|PZ_{ij}\|_{p'}=\mathcal{O}(1)$  for $\frac 1 p+\frac 1 {p'}=1$. Moreover, considering $\|\bh\|_p = o(|\log \varepsilon|^{-1})$, the H\"{o}lder inequality together with the Moser–Trudinger inequality yields
		\begin{equation}
			\label{eq:lem3.1-3} \left|\int_{\Sigma}h_i PZ_{ij}dv_g\right|\leq \|PZ_{ij}\|_{p'}\|h_i\|_{p}=\cO( \|h\|)=o(|\log \varepsilon|^{-1}).
		\end{equation}
		Applying Lemma~\ref{lem:extension_PZ_as} again, we derive that 
		\begin{align}\label{eq:lem3.1-4}
			&	(\log \varepsilon)\int_{\Sigma} 2 \alpha_i^2 \chi_je^{-\varphi_j}  \frac{\delta_{i,j}^{\alpha_i}|y_{\xi_j}|^{\alpha_i-2}}{\left(\delta_{i,j}^{\alpha_i}+|y_{\xi_j}|^{\alpha_i}\right)^2} \phi_i\left( PZ_{ij}-Z_{ij}\right)  dv_g\\ &= (\log \varepsilon)
			\int_{\Sigma} 2 \alpha_i^2 \chi_je^{-\varphi_j}  \frac{\delta_{i,j}^{\alpha_i}|y_{\xi_j}|^{\alpha_i-2}}{\left(\delta_{i,j}^{\alpha_i}+|y_{\xi_j}|^{\alpha_i}\right)^2} \phi_i\left(1 +\mathcal{O}(\delta_{i,j}^{2}|\log \delta_{i,j}|) \right)  dv_g\nonumber\\
			&=(1+\mathcal{O}(\delta_{i,j}^{2}|\log \delta_{i,j}|))(\log\varepsilon)\int_{\Omega_{ij}} 2\alpha_i^2 \frac{|y|^{\alpha_i-2}}{\left(1+|y|^{\alpha_i}\right)^2} \tilde{\phi}_{ij}(y)  dy+\mathcal{O}(\delta_{i,j}^{\alpha_i}|\log \varepsilon|)\nonumber\\
			&\stackrel{\eqref{def:delta_i}}{=}  (\log\varepsilon)\int_{\Omega_{ij}} 2\alpha_i^2 \frac{|y|^{\alpha_i-2}}{\left(1+|y|^{\alpha_i}\right)^2} \tilde{\phi}_{ij}(y)  dy+o(1), 
			\nonumber
		\end{align}
		as $ \varepsilon\rightarrow 0.$
		For $i\neq i'$, 
		By Lemma \ref{lem:extension_PZ_as}, we have  
		\begin{equation}
			\label{eq:extension_PZ_div}
			PZ_{ij}\circ y_{\xi_j}^{-1}(\delta_{i',j} y)=
			\left\{ \begin{array}{ll}
				\mathcal{O}\left( \left(\frac{\delta_{i,j}}{\delta_{i',j}}\right)^{\alpha_i} \frac 1 {|y|^{\alpha_i}}+\delta_{i,j}^{2}|\log\delta_{i,j}|\right)& \text{ if } i<i'\\
				2+\mathcal{O}\left(\left(\frac{\delta_{i',j}}{\delta_{i,j}}\right)^{\alpha_i} |y|^{\alpha_i}+\delta_{i,j}^{2}|\log\delta_{i,j}|\right)& \text{ if } i'<i
			\end{array}		\right..
		\end{equation}
		It follows that for $i\neq i'$
		\begin{align}\label{eq:lem3.1-5}
			&\quad 	(\log \varepsilon)\int_{\Sigma} 2 \alpha_{i'}^2 \chi_je^{-\varphi_j}  \frac{\delta_{i',j}^{\alpha_{i'}}|y_{\xi_j}|^{\alpha_{i'}-2}}{\left(\delta_{i',j}^{\alpha_{i'}}+|y_{\xi_j}|^{\alpha_{i'}}\right)^2} \phi_{i'} PZ_{ij}  dv_g\\ 
			&= (\log \varepsilon)
			\int_{\Omega_{i'j}}2 \alpha_{i'}^2  \frac{|y|^{\alpha_{i'}-2}}{\left(1+|y|^{\alpha_{i'}}\right)^2} \tilde{\phi}_{i'j}(y) PZ_{ij}\circ y_{\xi_j}^{-1}(\delta_{i',j} y)  dy+ \mathcal{O}(|\log\varepsilon|\delta_{i',j}^{\alpha_{i'}}) \nonumber\\
			&\stackrel{\eqref{eq:extension_PZ_div}}{=} \left\{\begin{array}{ll}
				\mathcal{O}\left( |\log\varepsilon|\int_{\Omega_{i'j}}2 \alpha_{i'}^2  \frac{|y|^{\alpha_{i'}-2}}{\left(1+|y|^{\alpha_{i'}}\right)^2} |\tilde{\phi}_{i'j}(y)| \left( \left(\frac{\delta_{i,j}}{\delta_{i',j}}\right)^{\alpha_i} \frac 1 {|y|^{\alpha_i}}+\delta_{i,j}^{2}\log \delta_{i,j}\right)  dy \right) &\\
				\hfill \text{
					if } i<i'	&\\
				2(\log\varepsilon) \int_{\Omega_{i'j}} 2\alpha_{i'}^2 \frac{|y|^{\alpha_{i'}-2}}{\left(1+|y|^{\alpha_{i'}}\right)^2} \tilde{\phi}_{i'j}(y) dy&\\
				+\mathcal{O}\left( |\log\varepsilon|\int_{\Omega_{i'j}}2 \alpha_{i'}^2  \frac{|y|^{\alpha_{i'}-2}}{\left(1+|y|^{\alpha_{i'}}\right)^2} |\tilde{\phi}_{i'j}(y)| \left( \left(\frac{\delta_{i',j}}{\delta_{i,j}}\right)^{\alpha_i}  |y|^{\alpha_i}+\delta_{i,j}^{\alpha_i}\right)  dy \right)&\\
				\hfill \text{ if } i'<i   & 
			\end{array}\right. \nonumber\\
			&= \left\{\begin{array}{ll}
				o(1) & \text{ if } i<i\\
				2(\log\varepsilon) \int_{\Omega_{i'j}} 2\alpha_{i'}^2 \frac{|y|^{\alpha_{i'}-2}}{\left(1+|y|^{\alpha_{i'}}\right)^2} \tilde{\phi}_{i'j}(y) dy +o(1) & \text{ if } i'<i
			\end{array}\right.. \nonumber
		\end{align}
		Substituting \eqref{eq:lem3.1-3}, \eqref{eq:lem3.1-4} and \eqref{eq:lem3.1-5} into \eqref{eq:lem3.1-2}, and applying induction on $
		i=1,\dots,N,$ we obtain 
		$(\log\varepsilon) \int_{\Omega_{ij}} 2\alpha_{i}^2 \frac{|y|^{\alpha_{i}-2}}{\left(1+|y|^{\alpha_{i}}\right)^2} \tilde{\phi}_{ij}(y) dy=o(1)$ as $\varepsilon\rightarrow 0$ for $i=1, \dots, N $.
		\begin{itemize}
			\item [{\bf Step 3.}]\label{item:step3_a}	{\it  $c_{ij}=0$ for $i=1,\cdots, N$ and $j=1,\cdots, m$. }
		\end{itemize} 
		Using $PU^i_j$ as a test function for~\eqref{eq:linear_key_a}, we derive that 
		\begin{align}\label{eq:test_PU^i_j}
			&\quad \int_{\Sigma} 2 \alpha_i^2 \chi_je^{-\varphi_j}  \frac{\delta_{i,j}^{\alpha_i}|y_{\xi_j}|^{\alpha_i-2}}{\left(\delta_{i,j}^{\alpha_i}+|y_{\xi_j}|^{\alpha_i}\right)^2} \phi_i dv_g\\
			&=\int_{\Sigma} 2 \alpha_i^2 \chi_je^{-\varphi_j}  \frac{\delta_{i,j}^{\alpha_i}|y_{\xi_j}|^{\alpha_i-2}}{\left(\delta_{i,j}^{\alpha_i}+|y_{\xi_j}|^{\alpha_i}\right)^2} \phi_i PU^i_j dv_g\nonumber\\
			&\quad+ \sum_{\substack{i'=1 \\i' \neq i}}^N \frac{a_{ii'}}2\int_{\Sigma} 2 \alpha_{i'}^2 \chi_je^{-\varphi_j}  \frac{\delta_{i',j}^{\alpha_{i'}}|y_{\xi_j}|^{\alpha_{i'}-2}}{\left(\delta_{i',j}^{\alpha_{i'}}+|y_{\xi_{i'}}|^{\alpha_{i'}}\right)^2} \phi_{i'}PU^i_j dv_g+\int_{\Sigma} h_i PU^i_j dv_g. \nonumber
		\end{align}
		The  $L.H.S$   of~\eqref{eq:test_PU^i_j} implies that 
		\begin{align*}
			\int_{\Sigma} 2 \alpha_i^2 \chi_je^{-\varphi_j}  \frac{\delta_{i,j}^{\alpha_i}|y_{\xi_j}|^{\alpha_i-2}}{\left(\delta_{i,j}^{\alpha_i}+|y_{\xi_j}|^{\alpha_i}\right)^2} \phi_i dv_g&=\int_{\Omega_{ij}} 2\alpha_i^2 \frac{ |y|^{\alpha_i-2}}{(1+|y|^{\alpha_i})^2}\tilde{\phi}_{ij}(y) dy+\mathcal{O}(\delta_{i,j}^{\alpha_i}) \\
			&=o(1),
		\end{align*}
		as established in \hyperref[item:step2_a]{Step 2}. 
		Moreover, by Lemma~\ref{lem:extension_PU_as}, it holds that  $\|PU^i_j\|_{\infty}= \mathcal{O}(|\log\varepsilon|)$. It follows that 
		$
		\left|\int_{\Sigma} h_i PU^i_j dv_g \right| \leq \|h_i\|_{p} \|PU^i_j\|_{p'}
		=o(1),
		$
		by the H\"{o}lder's inequality, 
		where $\frac 1 p+\frac{1}{p'}=1.$
		Then, by  H\"{o}lder's inequality together with the Moser–Trudinger inequality, we obtain the following estimate:
		\begin{equation*}
			\begin{split}
				&\delta_{i,j}\int_{\Omega_{ij}} 2 \alpha_i^2 \frac{|y|^{\alpha_i-2}}{\left(1+|y|^{\alpha_i} \right)^2}|\tilde{\phi}_{ij}(y)| |y| dy \\&\leq  2\alpha_i^2 
				\delta_{i,j} \delta_{i,j}^{\frac{2(1-q)}{q}}\left\|\phi_i\right\|_{q'}\left(\int_{\R^2}\left(\frac{|y|^{\alpha_j-1}}{\left(1+|y|^{\alpha_j}\right)^2}\right)^q d y\right)^{1 / q} =O\left(\delta_{i,j}^{\frac{2-q}{q}}\right)=o(1),
			\end{split}
		\end{equation*}
		where $q\in(1,2)$ such that $\frac 1 q +\frac 1 {q'}=1$.

		For $i=1, \dots, N ,$ applying Lemma~\ref{lem:extension_PU_as}, \hyperref[item:step1_a]{Step 1} and \hyperref[item:step2_a]{Step 2},  we deduce that 
		\begin{equation*}
			\begin{split}
				&\quad\int_{\Sigma} 2 \alpha_i^2 \chi_je^{-\varphi_j}  \frac{\delta_{i,j}^{\alpha_i}|y_{\xi_j}|^{\alpha_i-2}}{\left(\delta_{i,j}^{\alpha_i}+|y_{\xi_j}|^{\alpha_i}\right)^2} \phi_i PU^i_j dv_g\\ &=\int_{\Sigma} 2 \alpha_i^2  \frac{\delta_{i,j}^{\alpha_i}|y_{\xi_j}|^{\alpha_i-2}}{\left(\delta_{i,j}^{\alpha_i}+|y_{\xi_j}|^{\alpha_i}\right)^2} \phi_i \left(-2\chi_j\log\left( \delta_{i,j}^{\alpha_i}+ |y_{\xi_j}|^{\alpha_i} \right) +\frac {\alpha_i\varrho(\xi_j)} 2 H^g(\cdot,\xi_j)\right. \\
				&\left. \quad+ \mathcal{O}(\delta_{i,j}^{2}|\log \delta_{i,j}|)\right)  dv_g \\
				&=
				\int_{\Omega_{ij}}  \frac{2 \alpha_i^2|y|^{\alpha_i-2}}{\left(1+|y|^{\alpha_i}\right)^2} \tilde{\phi}_{ij}(y)\left(-2 \alpha_i \log \delta_{i,j}-2 \log \left(1+|y|^{\alpha_i}\right)+\frac{\alpha_i \varrho(\xi_j)}{2}R^g(\xi_j)\right) d y\\
				&\quad+O\left(\int_{\Omega_{ij}} 2 \alpha_i^2 \frac{|y|^{\alpha_i-2}}{\left(1+|y|^{\alpha_i} \right)^2}\left|\tilde{\phi}_{ij}(y)\right|\left(\delta_{i,j}|y|+\delta_{i,j}^{2}|\log\delta_{i,j}|\right) d y\right)+\mathcal{O}(\delta_{i,j}^{\alpha_i})\\
				&\rightarrow  -2 c_{ij} \int_{\R^2} 2 \alpha_i^2 \frac{|y|^{\alpha_i-2}}{\left(1+|y|^{\alpha_i}\right)^2}\frac{1- |y|^{\alpha_i}}{1+|y|^{\alpha_i}}\log(1+|y|^{\alpha_i}) dy= 4\alpha_i\pi c_{ij} ,
			\end{split}
		\end{equation*}
		as $ \varepsilon\rightarrow 0,$
		in which the last convergence used the facts that  $$\int_{\R^2} 2 \alpha_i^2 \frac{|y|^{\alpha_i-2}}{\left(1+|y|^{\alpha_i}\right)^2}\frac{1- |y|^{\alpha_i}}{1+|y|^{\alpha_i}} dy=0 $$
		and 
		$\int_{\R^2}  2\alpha_i^2 \frac{|y|^{\alpha_i-2}}{\left(1+|y|^{\alpha_i}\right)^2}\frac{1- |y|^{\alpha_i}}{1+|y|^{\alpha_i}}\log(1+|y|^{\alpha_i}) dy=-2\pi \alpha_i.$
		For any $i'\neq i$, by Lemma~\ref{lem:extension_PU_as}, \hyperref[item:step1_a]{Step 1} and \hyperref[item:step2_a]{Step 2} again, it follows that as $\varepsilon\rightarrow 0,$
		\begin{align*}
			&\quad\int_{\Sigma} 2 \alpha_{i'}^2 \chi_je^{-\varphi_j}  \frac{\delta_{i',j}^{\alpha_{i'}}|y_{\xi_j}|^{\alpha_{i'}-2}}{\left(\delta_{i',j}^{\alpha_{i'}}+|y_{\xi_j}|^{\alpha_{i'}}\right)^2} \phi_{i'} PU^i_j dv_g\\ &=\int_{\Sigma} 2 \alpha_{i'}^2 \chi_je^{-\varphi_j}  \frac{\delta_{i',j}^{\alpha_{i'}}|y_{\xi_j}|^{\alpha_{i'}-2}}{\left(\delta_{i',j}^{\alpha_{i'}}+|y_{\xi_j}|^{\alpha_{i'}}\right)^2} \phi_{i'}  \left(-2\chi_j\log\left( \delta_{i,j}^{\alpha_i}+ |y_{\xi_j}|^{\alpha_i} \right) \right. \\
			&\left. \quad+\frac {\alpha_i\varrho(\xi_j)} 2 H^g(\cdot,\xi_j)+ \mathcal{O}(\delta_{i,j}^{2}|\log \delta_{i,j}|)\right)  dv_g \\
			&	=\int_{\Omega_{i'j}} 2 \alpha_{i'}^2 \frac{|y|^{\alpha_{i'}-2}}{\left(1+|y|^{\alpha_{i'}}\right)^2} \tilde{\phi}_{i'j}\left(-2 \log \left(\delta_{i,j}^{\alpha_i}+\delta_{i',j}^{\alpha_i}|y|^{\alpha_i}\right)+\frac{\alpha_i \varrho(\xi_j)}{2}R^g(\xi_j)\right) d y \\
			&\quad+O\left(\int_{\Omega_{i'j}} 2 \alpha_{i'}^2 \frac{|y|^{\alpha_{i'}-2}}{\left(1+|y|^{\alpha_{i'}} \right)^2}\left|\tilde{\phi}_{i'j}(y)\right|\left(\delta_{i',j}|y|+\delta_{i,j}^{2}|\log\delta_{i,j}|\right) d y\right)+\mathcal{O}(\delta_{i',j}^{\alpha_{i'}})\\
			&\rightarrow  \left\{\begin{array}{ll}
				-2\alpha_i c_{i'j} \int_{\R^2} 2 \alpha_i^2 \frac{|y|^{\alpha_i-2}}{\left(1+|y|^{\alpha_i}\right)^2}\frac{1- |y|^{\alpha_i}}{1+|y|^{\alpha_i}}\log|y| dy & \text{ if } i<i'\\
				0 & \text{ if } i>i'
			\end{array} \right. \\
			&
			=\left\{\begin{array}{ll}
				8\pi \alpha_i  c_{i'j}	& \text{ if } i<i'\\
				0 & \text{ if } i>i'
			\end{array} \right.,
		\end{align*}
		in which  we used $\int_{\R^2} 2 \alpha_i^2 \frac{|y|^{\alpha_i-2}}{\left(1+|y|^{\alpha_i}\right)^2}\frac{1- |y|^{\alpha_i}}{1+|y|^{\alpha_i}} dy=0 $ and  $$\int_{\R^2} 2 \alpha_i^2 \frac{|y|^{\alpha_i-2}}{\left(1+|y|^{\alpha_i}\right)^2}\frac{1- |y|^{\alpha_i}}{1+|y|^{\alpha_i}}\log|y| dy=-4\pi.$$
		By summing up over the index $i'\in \{1,\dots, N\}$,  
		the  $R.H.S.$   of~\eqref{eq:test_PU^i_j} has the following estimate:
		\begin{equation*}
			\left\{\begin{array}{ll}
				4\pi \alpha_i	\left( c_{ij}-  \sum_{i'>i}(-a_{ii'}) c_{i'j}\right) +o(1) &\text{ if } i=1, \cdots, N-1\\
				4\pi \alpha_i c_{ij}+o(1) & \text{ if } i=N
			\end{array}\right..
		\end{equation*}
		Consequently,  $c_{ij}=0$ for any $i=1,\dots, N$, and $j=1,\dots,m.$
		
		\begin{itemize}
			\item [{\bf Step 4.}]\label{item:step4_a}	{\it  Construct a contradiction. }
		\end{itemize} 
		We  reduce the Cartan matrix $(a_{ij})$ to be diagonal using Gaussian elimination as follows:  
		\begin{equation}\label{eq:lem3.1-10}
			-\Delta_g	\left(\begin{matrix}
				\frac 4 3 \phi_1+ \frac  2 3 \phi_ 2 \\
				\frac 1 2 \phi_1+ \frac 3 2 \phi_2+ \frac 3 4 \phi_3 \\
				\vdots\\
				\frac{
					N-3}{N-2}\phi_{N-3}+ (1+\frac{N-2}{N})\phi_{N-2}+ \frac{N-1}{N} \phi_{N-1}
				\\
				\frac{N-2}{N-1}\phi_{N-2}+ ( 1+
				\frac{a_*}{2-a_*})\phi_{N-1} + \frac{ -a_{N-1 N}}{2-a_*} \phi_N \\
				\phi_N + (-a_{N N-1})\frac{N-1}{N}\phi_{N-1}
			\end{matrix}\right)= \frac 12  (\tilde{a}_{ij})  \mathbf{\Psi}+o(1) ,
		\end{equation}
		where $a_* = \frac{N-1}{N}(a_{NN-1} a_{N-1 N})=\begin{cases}
			\frac {N-1} N & \text{ for }\mathbf{A}_N \\
			2 \frac{N-1}{N} & \text{ for } \mathbf{B}_N,\mathbf{C}_N\\
			\frac 3 2    & \text{ for }\mathbf{G}_2
		\end{cases}, $ 
		$$(\tilde{a}_{ij}):= \left(
		\begin{matrix}
			2 & 0 &  \cdots&0 & 0 \\
			0 & \frac 3 2 &  \cdots&0 & 0 \\
			\vdots & \vdots & \ddots& \vdots &\vdots \\
			0 & 0 &  \cdots &\frac{N}{N-1} &  0\\
			0 & 0 &  \cdots & 0  &2-a_*\\
		\end{matrix}
		\right)$$ 
		and 
		\[ \mathbf{\Psi}=  \left(\begin{matrix}
			\sum_{j=1}^m\left( \chi_j e^{-\varphi_j} |y_{\xi_j}|^{\alpha_1-2} e^{U^1_j} \phi_1-\overline{\chi_j e^{-\varphi_j} |y_{\xi_j}|^{\alpha_1-2} e^{U^1_j} \phi_1}\right)\\
			\vdots\\
			\sum_{j=1}^m\left( \chi_j e^{-\varphi_j} |y_{\xi_j}|^{\alpha_N-2} e^{U^N_j} \phi_N-\overline{\chi_j e^{-\varphi_j} |y_{\xi_j}|^{\alpha_N-2} e^{U^N_j} \phi_N}\right)
		\end{matrix}\right).\]
		Since $\tilde{\phi}_{ij} \rightarrow 0$ in $\rL^{\alpha_i}$, we have
		\[  \sum_{j=1}^m\frac 1 2 \int_{\Sigma} \chi_je^{-\varphi_j} |y_{\xi_j}|^{\alpha_{i}-2} e^{U^{i}_j} \phi_{i}^2  dv_g=  \sum_{j=1}^m \cO(\|\tilde{\phi}_{ij}\|^2_{\rL^{\alpha_i}})  =o(1).\]
		Using $\phi_i$ as a test function of \eqref{eq:lem3.1-10} for $i=1,\dots, N$,  we deduce that 
		\begin{equation*}
			\begin{split}
				&	(1+\frac 1 3 )\|\phi_1\|^2 + \frac 2 3 \la \phi_2, \phi_1\ra 	= o(1),\\
				& (1+ \frac  12 )\|\phi_2\|^2+ \frac 1 2 \la \phi_1, \phi_2\ra + \frac  3 4 \la \phi_3, \phi_2 \ra  =
				+o(1),\\
				&\cdots \cdots\\
				& (1+\frac{N-2}{N})\|\phi_{N-2}\|^2 + \frac{N-3}{N-2} \la \phi_{N-3}, \phi_{N-2}\ra +\frac{N-1}{N} \la \phi_{N-1}, \phi_{N-2} \ra =o(1),\\
				&( 1+ \frac{a_*}{2-a^*})	\|\phi_{N-1}\|^2 +\frac {N-2}{N-1} \la \phi_{N-2},\phi_{N-1}\ra+\frac {-a_{N-1 N} } {2-a_*}\la \phi_{N},\phi_{N-1} \ra=o(1),\\
				& \|\phi_N\|^2+\frac{N-1}{N}( -a_{NN-1}) \la \phi_{N-1}, \phi_N \ra =o(1), 
			\end{split}
		\end{equation*}
		where we applied \eqref{eq:h_phi},  and $\int_\Sigma \phi_i dv_g=0.$ 
		By summing up the estimates above, the Cauchy–Schwarz inequality and  Young's inequality yield that 
		\begin{equation*}
			\frac 1 2\Big(\frac 1 N -\frac 1 {N+1}\Big)	\sum_{i=1}^N \|\phi_i\|^2= o(1), 
		\end{equation*}
		which
		contradicts with the assumption $\|\bphi\|=1$. 
		Lemma~\ref{lem:invertible_as} is concluded. 
	\end{proof}
	
	For any $p>1$, let $i^*_p: L^p(\Sigma)\rightarrow \oH$  be the adjoint operator corresponding to the immersion $i: \oH \rightarrow L^{\frac{p}{p-1}}$ and $\tilde{i}^* : \cup_{p>1} L^p(\Sigma)\rightarrow \oH$. For any $f\in L^p(\Sigma)$, we define that 
	$i^*(f):=\tilde{i}^*( f-\bar{f})$, i.e. for any $h\in \oH $, 
	$\la i^*(f), h\ra =\int_{\Sigma}(f-\bar{f}) h dv_g.$
	From  \cite{yang2006extremal}, we have the Moser-Trudinger type inequality  on compact Riemann surfaces as follows: 
	\begin{equation*} \begin{array}{lll}
			\log \int_{\Sigma} e^u dv_g &\leq&
			\frac 1 {8\pi} \int_{\Sigma} |\nabla_g u|^2 dv_g  +C, \text{ for any } u\in \oH, 
	\end{array}\end{equation*}
	where $C>0$ is a constant. Then it follows  $\oH\rightarrow L^p(\Sigma), u\mapsto e^u$ is continuous.
	
	Let 
	$ f_i(u_i)= 2\varepsilon V_i e^{u_i}-\overline{2\varepsilon V_i e^{u_i}}, \text{ for } i=1,\dots, N, 
	$
	and 
	\begin{equation}
		F(\bu)=\left(\begin{array}{c}
			f_1(u_1)+\sum_{i'\neq 1}\frac {a_{1i'}}{2} f_{i'}(u_{i'})\\
			\vdots\\
			f_N(u_N)+ \sum_{i'\neq N} \frac {a_{Ni'}}{2} f_{i'}(u_{i'})
		\end{array}\right). 
	\end{equation}
	The problem~\eqref{eq:todaN} has the following equivalent form:
	\begin{equation} \label{eq:todaN2}
		\left\{\begin{array}{ll}
			\bu =(u_1,u_2)=i^*(F(\bu)) \\
			u\in \cH
		\end{array}\right.. 
	\end{equation}
	We define that
	$ \cL(\bphi)=i^*\left(  \cL_{\xi,\varepsilon}(\bphi)\right),$
	for $\bphi=(\phi_1,\dots, \phi_{N})\in \cH_{k}$. 
	\begin{proposition}
		There exist $\varepsilon_0>0$ and $C>0$ such that for any $\bh\in \cH_{k}$, there exist a constant $C>0$ and a unique solution $\bphi\in\cH_{k}$ solving 
		\begin{equation}\label{eq:L_bphi}
			\cL(\bphi)=\bh,
		\end{equation}
		with 
		$ \|\cL^{-1}\|\leq C |\log \varepsilon|,$
		for any $\varepsilon\in (0,\varepsilon_0).$
	\end{proposition}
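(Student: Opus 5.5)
The plan is to deduce the proposition from Lemma~\ref{lem:invertible_as} by a Fredholm argument in the closed subspace $\cH_k$. First, $\cL$ maps $\cH_k$ into $\cH_k$. The weights $\chi_j e^{-\varphi_j}|y_{\xi_j}|^{\alpha_i-2}e^{U^i_j}$ are $\fR_k$-invariant, because $\xi_j\in\Sigma_0$, $|y_{\xi_j}|$ and the conformal factor can be chosen $\fR_k$-equivariantly, and $\fR_k$ is an isometry. Consequently $\cL_{\xi,\varepsilon}(\bphi)$ is $\fR_k$-invariant whenever $\bphi$ is. Since $i^*$ commutes with the isometry $\fR_k$ (by uniqueness of the Neumann problem), $i^*$ preserves invariance.

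Next we write $\cL=\mathrm{Id}-\cK$ on $\cH_k$, where
\[
\cK(\bphi)=i^*\Big(\sum_j \chi_j e^{-\varphi_j}|y_{\xi_j}|^{\alpha_i-2}e^{U^i_j}\phi_i+\sum_{i'\neq i}\sum_j\tfrac{a_{ii'}}2\chi_j e^{-\varphi_j}|y_{\xi_j}|^{\alpha_{i'}-2}e^{U^{i'}_j}\phi_{i'}\Big)_{i=1,\dots,N}.
\]
Here we use $\la i^*(-\Delta_g\phi),\psi\ra=\la\phi,\psi\ra$ for $\phi\in\oH$. For fixed $\varepsilon$ the weights are bounded smooth functions. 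The compact embedding $\oH\hookrightarrow L^{q}$ for every $q$, combined with the continuity of $i^*:L^p\to\oH$, shows that $\cK$ is compact. Hence $\cL$ is Fredholm of index zero on $\cH_k$, and it suffices to prove the a priori estimate
\[
\|\bphi\|\le C|\log\varepsilon|\,\|\cL(\bphi)\| \qquad \text{for all } \bphi\in\cH_k.
\]
This estimate gives injectivity, and therefore surjectivity, with $\|\cL^{-1}\|\le C|\log\varepsilon|$.

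For the a priori estimate, suppose $\cL(\bphi)=\bh$ with $\bh\in\cH_k$. The difficulty is that $\bh$ lies only in $\oH$, not in $L^p$, so Lemma~\ref{lem:invertible_as} does not apply directly. We therefore set $\bpsi=\bphi-\bh\in\cH_k$. Then $\bphi=\cK(\bphi)+\bh$ implies $\bpsi=\cK(\bpsi+\bh)$, that is, $\cL(\bpsi)=\cK(\bh)$. Unfolding $i^*$, this says
\[
\cL_{\xi,\varepsilon}(\bpsi)=\tilde\bh,\qquad \tilde h_i=\sum_j \chi_j e^{-\varphi_j}|y_{\xi_j}|^{\alpha_i-2}e^{U^i_j}h_i+\sum_{i'\neq i}\sum_j\tfrac{a_{ii'}}2\,\chi_j e^{-\varphi_j}|y_{\xi_j}|^{\alpha_{i'}-2}e^{U^{i'}_j}h_{i'}-(\text{averages}),
\]
with the Neumann condition and zero mean. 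Moreover $\tilde\bh\in\fL^0_k$, and by elliptic regularity $\bpsi\in W^{2,p}$. Lemma~\ref{lem:invertible_as} then gives $\|\bpsi\|\le C|\log\varepsilon|\,\|\tilde\bh\|_p$, and so
\[
\|\bphi\|\le\|\bh\|+C|\log\varepsilon|\,\|\tilde\bh\|_p .
\]

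The main obstacle is bounding $\|\tilde\bh\|_p\le C\|\bh\|$ uniformly in $\varepsilon$. This is delicate because the weights have $L^p$ norm of order $\delta_{i,j}^{2(1-p)/p}$ by \eqref{eq:L_p_exp_a}, which blows up. The first approach is to use H\"older's inequality with $p$ close to $1$ together with Moser--Trudinger: $\|w h\|_p\le\|w\|_{pr}\|h\|_{pr'}$. However, $\|w\|_{pr}$ still blows up as a power of $\delta$, unless one accepts a loss that the target bound cannot afford.

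The better route is to avoid $L^p$ altogether and work in the dual norm. Testing against $\oH$ functions, the scaled weighted Sobolev (Hardy-type) embedding $\rH^{\alpha_i}\hookrightarrow\rL^{\alpha_i}$ from the proof of Lemma~\ref{lem:invertible_as} shows that $\|i^*(w h)\|\le C\|h\|$ uniformly in $\varepsilon$. The reason is that $\int w h\psi\le(\int wh^2)^{1/2}(\int w\psi^2)^{1/2}$, and $\int w\psi^2\le C\|\psi\|^2$ after scaling. In that case I would rerun the contradiction argument of Lemma~\ref{lem:invertible_as} with $\bh$ measured in the $\cH$ norm. Steps 1--4 only use $\bh$ through pairings $\int h_i\phi_l$, $\int h_iPZ_{ij}$ and $\int h_iPU^i_j$, which are controlled by $\|\bh\|\,\|\cdot\|$ together with $\|PU^i_j\|=\cO(|\log\varepsilon|^{1/2})$. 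Carrying out this rerun is where I expect the real work to lie, and the factor $|\log\varepsilon|$ arises exactly from the pairing with $PU^i_j$ in Step 3.
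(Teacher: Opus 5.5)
Your plan works once one superfluous claim is dropped. Its Fredholm set-up matches the paper's: $\cL$ preserves $\cH_k$, $\cL=\mathrm{Id}-\mathcal{K}$ with $\mathcal{K}$ compact for fixed $\varepsilon$, and the problem reduces to an a priori bound. The two routes differ in how $\|\bphi\|\le C|\log\varepsilon|\,\|\cL(\bphi)\|$ is obtained.

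\textbf{Comparison with the paper.} The paper writes $\|\bphi\|\le C|\log\varepsilon|\,\|\cL_{\xi,\varepsilon}(\bphi)\|_p\le C|\log\varepsilon|\,\|\cL(\bphi)\|$. The second inequality runs opposite to the one actually available, $\|i^*\mathbf f\|\le C\|\mathbf f\|_p$. Moreover, when $\cL(\bphi)=\bh$ with a general $\bh\in\cH_k$, one only knows $\cL_{\xi,\varepsilon}(\bphi)=-\Delta_g\bh$ weakly, an element of the dual of $\cH$ that in general is not in $L^p$. This is exactly the obstruction you identified. So the paper's argument controls $\cL^{-1}$ only on $i^*\big((L^p)^N\big)$, which is all Section~\ref{sec:4} uses. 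Your plan, rerunning Steps 1--4 of Lemma~\ref{lem:invertible_as} under $\|\bphi_n\|=1$ and $|\log\varepsilon_n|\,\|\bh_n\|\to0$, gives the bound on all of $\cH_k$, as stated.

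The rerun does go through. The data $\bh$ enters only in four places:
\begin{itemize}
\item via $\la h_i,\phi_l\ra=o(1)$;
\item via the rescaled test functions $\varphi(y_{\xi_j}/\delta_{i,j})$ in Step 1, which you omitted but which are harmless because the Dirichlet integral is conformally invariant;
\item via $\la h_i,PZ_{ij}\ra$, where $\|PZ_{ij}\|=\cO(1)$;
\item via $\la h_i,PU^i_j\ra$, where $\|PU^i_j\|=\cO(|\log\varepsilon|^{1/2})$.
\end{itemize}
One correction to your accounting: the binding constraint is Step 2, not Step 3. The pairing $\la h_i,PZ_{ij}\ra$ must be $o(|\log\varepsilon|^{-1})$ because Step 3 multiplies $\int w\tilde\phi_{ij}$ by $-2\alpha_i\log\delta_{i,j}$. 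The $PU^i_j$ pairing only needs $\|\bh\|=o(|\log\varepsilon|^{-1/2})$.

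\textbf{What to discard.} The intermediate claim $\|i^*(wh)\|\le C\|h\|$ uniformly in $\varepsilon$ is false. In the chart at $\xi_j$, let $\psi=\min\{\log(r_0/|y|),\log(r_0/\delta_{i,j})\}$ for $|y|\le r_0$, zero outside, minus its mean. Then $\|\psi\|^2\approx 2\pi|\log\delta_{i,j}|$. The weight has mass $2\pi\alpha_i$ on $\{|y|<\delta_{i,j}\}$, where $\psi\approx|\log\delta_{i,j}|$. Hence $\int w\psi^2\gtrsim|\log\delta_{i,j}|^2$, and $\|\mathcal{K}\|$ grows like $|\log\varepsilon|$.

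The weighted embedding $\rH^{\alpha_i}\hookrightarrow\rL^{\alpha_i}$ cannot give the uniform bound, because the $\rH^{\alpha_i}$-norm already contains the weighted $L^2$ term. In the lemma that term is bounded through the equation, see \eqref{eq:bouned_exp_phi_a}, not through $\|\bphi\|=1$.

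For the same reason the detour $\boldsymbol{\psi}=\bphi-\bh$ is counterproductive. With the correct bound $\|\mathcal{K}(\bh)\|\le C|\log\varepsilon|\,\|\bh\|$ it would only give $C|\log\varepsilon|^2$. Apply the $\cH$-version of the contradiction argument directly to $\bphi$ with data $\bh$; nothing else is needed.
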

	\begin{proof}
		We observe that $\bphi\mapsto i^*\left(\chi_j e^{-\varphi_j} |y_{\xi_j}|^{\alpha_i-2} e^{U^i_j} \phi_i\right)$ is a compact operator in $\cH_{k}$ for any $j=1,\dots,m, i=1, \dots, N $.
		Consequently, we infer that $\cL$ is  a Fredholm operator in $\cH_{k}.$
		Suppose that $\bphi$ solves $\cL(\bphi)=0.$ By Lemma~\ref{lem:invertible_as}, we deduce that $\bphi=0$. From this deduction and the application of Fredholm's alternative, we can establish the uniqueness and existence of solutions for~\eqref{eq:L_bphi} in $\cH_{k}$. 
		
		It remains to get the estimate of the inverse operator of $\cL$. Given any $\bh\in \cH_{k}$, we already proved that there exists a unique $\bphi=(\phi_1,\cdots, \phi_N)\in \cH_{k}$ such that 
		$i^*\circ\cL_{\xi,\varepsilon}(\bphi)=\bh$. Lemma~\ref{lem:invertible_as} implies that 
		\begin{equation*}
			\|\cL^{-1}(\bh)\|=	\|\bphi\|\leq C |\log\varepsilon|\|\cL_{\xi,\varepsilon}(\bphi)\|_p\leq  C |\log\varepsilon|
			\|\cL(\bphi)\|\leq C|\log\varepsilon| \|\bh\|,
		\end{equation*}
		where $p>1$.
	\end{proof}
	\section{Nonlinear problem}\label{sec:4}
	
	In this section, firstly, we show the error terms associated with the approximate solutions is sufficiently small. We then establish the solvability of the nonlinear problem using the contraction mapping principle.
	
	The expected solution $\bW_{\varepsilon}+\bphi_{\varepsilon}$ solves~\eqref{eq:todaN} if and only if 
	$\bphi_{\varepsilon}$ solves the following problem in $\cH_{k}$:
	\begin{equation}
		\label{eq:toda_as_nonlinear}
		\cL_{\xi,\varepsilon}(\bphi) =\cS_{\xi,\varepsilon}(\bphi)+ \cN_{\xi,\varepsilon}(\phi)+\cR_{\xi,\varepsilon}.
	\end{equation}
	Here, $\cL_{\xi,\varepsilon}$ denotes the linear operator defined in~\eqref{eq:linear_op_a}, and $$\mathcal{S}_{\xi,\varepsilon}(\bphi) := \left(S_{\xi,\varepsilon}^1(\bphi), \dots, S_{\xi,\varepsilon}^N(\bphi)\right),$$ denotes a higher-order linear operator, defined    for each $i = 1, \dots, N$ by
	\begin{equation*}
		\begin{split}
			S_{\xi,\varepsilon}^i(\bphi)&:= \Big(-\sum_{j=1}^m\chi_j e^{-\varphi_j}|y_{\xi_j}|^{\alpha_i-2} e^{U^i_j}+2 \varepsilon V_ie^{ W_{i,\varepsilon}}\Big) \phi_i\\
			&\quad-\overline{\Big(-\sum_{j=1}^m\chi_j e^{-\varphi_j}|y_{\xi_j}|^{\alpha_i-2} e^{U^i_j}+2 \varepsilon V_ie^{ W_{i,\varepsilon}}\Big) \phi_i}\\
			&\quad+\sum_{\substack{i'=1\\ 
					i' \neq i}}^N(-a_{ii'}) \Big(\sum_{j=1}^m \frac{1}{2}\chi_j e^{-\varphi_j}|y_{\xi_j}|^{\alpha_{i'}-2} e^{U^{i'}_j}-\varepsilon V_{i'}e^{W_{i',\varepsilon}}\Big) \phi_{i'}\\
			&\quad-\overline{ \sum_{\substack{i'=1\\ 
						i' \neq i}}^N (-a_{ii'}) \Big(\sum_{j=1}^m \frac{1}{2}\chi_j e^{-\varphi_j}|y_{\xi_j}|^{\alpha_{i'}-2} e^{U^{i'}_j}-\varepsilon V_{i'} e^{W_{i',\varepsilon}}\Big) \phi_{i'} }, 
		\end{split}
	\end{equation*}
	$\mathcal{N}_{\xi,\varepsilon}(\bphi):=\left(N_{\xi,\varepsilon}^1(\bphi),\dots,  N_{\xi,\varepsilon}^N(\bphi)\right)$ is 	the nonlinear term, defined by
	\begin{equation*}
		\begin{split}
			N_{\xi,\varepsilon}^i(\bphi):&=2 \varepsilon V_i  e^{W_{i,\varepsilon}}\left(e^{\phi_i}-1-\phi_i\right)-\overline{2 \varepsilon V_i  e^{W_{i,\varepsilon}}\left(e^{\phi_i}-1-\phi_i\right)}\\
			&\quad -  \sum_{\substack{i'=1 \\ i' \neq i}}^N(-a_{ii'})\left( \varepsilon V_{i'} e^{W_{i',\varepsilon}}\left(e^{\phi_{i'}}-1-\phi_{i'}\right)-\overline{ \varepsilon V_{i'} e^{W_{i',\varepsilon}}\left(e^{\phi_{i'}}-1-\phi_{i'}\right) }\right), 
		\end{split}
	\end{equation*}
	for each $i = 1, \dots, N$, and  $\cR_{\xi,\varepsilon}:=(R^1_{\xi,\varepsilon},\dots, R^N_{\xi,\varepsilon} )$ is the error term, defined for each $i = 1, \dots, N$ by
	\begin{equation*}
		\begin{split}
			R^{i}_{\xi,\varepsilon}&=\Delta_g W_{i,\varepsilon}+2\varepsilon V_i e^{W_{i,\varepsilon}}- \sum_{\substack{i'=1 \\ i' \neq i}}^N (-a_{ii'})\varepsilon V_{i'} e^{W_{i',\varepsilon}}\\
			&\quad-\overline{\Big(2\varepsilon V_i e^{W_{i,\varepsilon}}- \sum_{\substack{i'=1 \\ i' \neq i}}^N (-a_{ii'})\varepsilon V_{i'} e^{W_{i',\varepsilon}}\Big)}. 
		\end{split}
	\end{equation*}
	Firstly, we will show that the error term goes to zero along $\varepsilon\rightarrow 0$.  
	To address this, we introduce a family of functions  to deal with the interactions among projected bubbles.
	For $i=1, \dots, N ,j=1,\dots,m$, and $y\in \Omega_{ij}:=\frac 1 {\delta_{i,j}} B^{\xi_j} $, 
	\begin{align}
		\label{eq:def_theta_as}
		\Theta_{ij}(y)&:=
		\exp\Big\{ \varphi_j\circ y_{\xi_j}^{-1}(\delta_{i,j}y)+ \Big(PU^i_j-U^i_j+\sum_{\sumi}^N \frac {a_{ii'}}2 PU^{i'}_j\Big)\circ y_{\xi_j}^{-1}(\delta_{i,j} y) \\
		&
		\quad	 +\sum_{j'\neq j} \Big( PU^i_{j'}+\sum_{\sumi}^N \frac {a_{ii'}}2 PU^{i'}_{j'}\Big)\circ y_{\xi_j}^{-1}(\delta_{i,j} y)+\log V_i\circ y_{\xi_j}^{-1}(\delta_{i,j} y)\nonumber\\
		&\quad+\log(a_{ii}\varepsilon)-(\alpha_i-2)\log|\delta_{i,j} y| \Big\}.\nonumber
	\end{align}
	We take $d_{i,j}$ with the value in~\eqref{eq:def_d_ij_as} to ensure that  $\Theta_{ij}$ is sufficiently small for $i=1, \dots, N , j=1,\dots,m$, as stated in Lemma~\ref{lem:theta_small_as}. 
	
	\begin{lemma}\label{lem:est_error_as}
		There exist $p_0>1$ and $\varepsilon_0>0$ such that for any $p \in\left(1, p_0\right)$ and $\varepsilon \in\left(0, \varepsilon_0\right)$ we have
		$	\left\|\mathcal{R}_{\xi,\varepsilon}\right\|_p:=\sum_{i=1}^N \|R^i_{\xi,\varepsilon}\|_{p}=O\left(\varepsilon^{\frac 1 {4N}\frac{2-p}{p }}\right).
		$
	\end{lemma}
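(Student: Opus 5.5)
We have to estimate $\|R^i_{\xi,\varepsilon}\|_p$ for each $i$. The plan is to write $R^i$ pointwise as a difference between the actual nonlinearity and the bubble profiles, and then estimate that difference region by region.

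First use the equation for $PU^{i'}_j$. It gives
\[
\Delta_g W_{i,\varepsilon}=-\sum_j\Big(\chi_je^{-\varphi_j}|y_{\xi_j}|^{\alpha_i-2}e^{U^i_j}+\sum_{i'\neq i}\tfrac{a_{ii'}}2\chi_je^{-\varphi_j}|y_{\xi_j}|^{\alpha_{i'}-2}e^{U^{i'}_j}\Big)+\text{const}.
\]
Hence, up to a constant (the mean) that we control by the same $L^1$ bounds,
\[
R^i_{\xi,\varepsilon}=\sum_{i''}\tfrac{a_{ii''}}{2}\Big(2\varepsilon V_{i''}e^{W_{i'',\varepsilon}}-\sum_j\chi_je^{-\varphi_j}|y_{\xi_j}|^{\alpha_{i''}-2}e^{U^{i''}_j}\Big).
\]
It therefore suffices to show, for each $l=1,\dots,N$,
\[
\Big\|\,2\varepsilon V_le^{W_{l,\varepsilon}}-\sum_j\chi_je^{-\varphi_j}|y_{\xi_j}|^{\alpha_l-2}e^{U^l_j}\Big\|_p=O\big(\varepsilon^{\frac1{4N}\frac{2-p}p}\big).
\]

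\textbf{Far region.} Away from $\bigcup_j U_{r_0}(\xi_j)$, Lemma~\ref{lem:extension_PU_as} gives $PU^l_j=O(1)$ up to $\frac{\alpha_l\varrho}{2}G^g$-type terms. So $\varepsilon e^{W_{l,\varepsilon}}=O(\varepsilon)$ there, and the bubble term vanishes or is exponentially small.

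\textbf{Near region: splitting into annuli.} Near each $\xi_j$, split $U_{r_0}(\xi_j)$ into the annuli $\cA_{ij}$, $i=1,\dots,N$. In the scaled variable $y\mapsto y_{\xi_j}^{-1}(\delta_{l,j}y)$, the definition \eqref{eq:def_theta_as} of $\Theta_{lj}$ is arranged so that
\[
2\varepsilon V_le^{W_{l,\varepsilon}}=e^{-\varphi_j}|y_{\xi_j}|^{\alpha_l-2}e^{U^l_j}\,\Theta_{lj}.
\]
The difference is therefore $e^{-\varphi_j}|y_{\xi_j}|^{\alpha_l-2}e^{U^l_j}(\Theta_{lj}-1)$. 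By Lemma~\ref{lem:theta_small_as}, with $d_{i,j}$ chosen as in \eqref{eq:def_d_ij_as}, $\Theta_{lj}-1$ is small.

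\textbf{Bound on the home annulus $\cA_{lj}$.} We expect $|\Theta_{lj}-1|=O(\delta_{l,j}|y|+\sum_{i'}(\delta_{i'-1}/\delta_{i'})^{c}+\dots)$. The mechanism is the identity $\alpha_l-2=-\sum_{i'<l}a_{li'}\alpha_{i'}$, together with $\delta_l^{\alpha_l}\prod_{i'>l}\delta_{i'}^{a_{li'}\alpha_{i'}}=\varepsilon$:
\begin{itemize}
\item the lower bubbles $PU^{i'}$, $i'<l$, contribute $-\alpha_{i'}\log|y|$ up to errors $(\delta_{i'}/|y|)^{\alpha_{i'}}$;
\item the higher bubbles, $i'>l$, contribute constants $-2\alpha_{i'}\log\delta_{i'}$ up to errors $(|y|/\delta_{i'})^{\alpha_{i'}}$.
\end{itemize}
The $L^p$ norm of $\delta^{\alpha}|y|^{\alpha-2}/(\delta^\alpha+|y|^\alpha)^2$ times such errors is then computed as in \eqref{eq:lem3.1-1}. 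It produces the factor $\delta_{l}^{\frac{2-2p}{p}}$ times a power of the ratio $\delta_{l-1}/\delta_l$ or $\delta_l/\delta_{l+1}$. Using \eqref{eq:diff_deltai}, and the worst case $\delta_{N-1}/\delta_N=O(\varepsilon^{1/(2(N-1))})$ evaluated at the square-root scale, this yields a power like $\varepsilon^{\frac1{4N}\frac{2-p}{p}}$.

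\textbf{Bound on the other annuli $\cA_{ij}$, $i\neq l$.} Here we bound the two terms separately rather than via $\Theta_{lj}-1$. The bubble term is estimated exactly as in \eqref{eq:lem3.1-1}, giving $o(1)$ with the required power. The term $\varepsilon e^{W_l}$ is controlled by the same pointwise expansion of $W_l$, which shows it is bounded by the bubble profile times a bounded factor.

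\textbf{Main obstacle.} The hardest part is the bookkeeping in the home annulus. We must show that the exponent, after inserting all the expansions from Lemma~\ref{lem:extension_PU_as}, cancels exactly to $\log\Theta_{lj}=o(1)$ uniformly on $\cA_{lj}$, including for the nonsymmetric last row of $\mathbf B_N$, $\mathbf C_N$, $\mathbf G_2$. We must also identify the slowest decaying ratio, which determines the exponent $\frac1{4N}$. For this I would rely on Lemma~\ref{lem:theta_small_as} and treat $l=N$ separately, using $\alpha_N=2-2(N-1)a_{NN-1}$.
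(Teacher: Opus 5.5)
Your reduction is the paper's proof. Using the equations for $PU^{i'}_j$, you write $R^i_{\xi,\varepsilon}$, up to its mean, as $\sum_{i''}\frac{a_{ii''}}{2}\big(2\varepsilon V_{i''}e^{W_{i'',\varepsilon}}-\sum_j\chi_je^{-\varphi_j}|y_{\xi_j}|^{\alpha_{i''}-2}e^{U^{i''}_j}\big)$, and you handle the mean by H\"older's inequality with $|\Sigma|_g=1$. The paper stops there, because the estimate you say ``suffices'' is exactly Lemma~\ref{lem:diif_e^W_sum_e^u_as}; citing it finishes the proof. You chose to re-derive that estimate, following the appendix scheme: annuli $\cA_{ij}$, Lemma~\ref{lem:theta_small_as} on the home annulus $\cA_{lj}$, separate bounds elsewhere. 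The home-annulus part is fine.

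The step on the annuli $\cA_{ij}$ with $i\neq l$ is wrong as stated. There $2\varepsilon V_le^{W_{l,\varepsilon}}$ is \emph{not} bounded by $e^{-\varphi_j}|y_{\xi_j}|^{\alpha_l-2}e^{U^l_j}$ times a bounded factor. In the variable $y=y_{\xi_j}(x)$, the logarithm of their ratio contains the nonnegative terms $-a_{li'}\log\big(1+(\delta_{i',j}/|y|)^{\alpha_{i'}}\big)$ for $i'<l$ and $-a_{li'}\log\big(1+(|y|/\delta_{i',j})^{\alpha_{i'}}\big)$ for $i'>l$. These are small on $\cA_{lj}$ but unbounded on the inner and outer annuli.

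Two examples for $\mathbf A_2$, where $\varepsilon=\delta_1^2\delta_2^{-4}$, up to bounded factors:
\begin{itemize}
\item $\varepsilon e^{W_{1,\varepsilon}}\approx\delta_1^2(1+|y|^4\delta_2^{-4})(\delta_1^2+|y|^2)^{-2}$, so its ratio to the first bubble is of order $\delta_2^{-4}$ near $|y|\sim r_0$.
\item $\varepsilon e^{W_{2,\varepsilon}}\approx\varepsilon(\delta_1^2+|y|^2)(\delta_2^4+|y|^4)^{-2}$ does not vanish at $y=0$, while the second bubble profile does.
\end{itemize}

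What is needed instead is a direct $L^p$ bound on each $\cA_{ij}$ of
$\varepsilon\prod_{l'\neq l}(\delta_{l',j}^{\alpha_{l'}}+|y|^{\alpha_{l'}})^{-a_{ll'}}(\delta_{l,j}^{\alpha_l}+|y|^{\alpha_l})^{-2}$,
using $\delta_l^{\alpha_l}\prod_{l'>l}\delta_{l'}^{a_{ll'}\alpha_{l'}}=\varepsilon$ and $\alpha_l-2=-\sum_{l'<l}a_{ll'}\alpha_{l'}$. This is the paper's computation of $\cI^{(i,i')}_j$, done separately for $l=1$, $l=N$ and $2\le l\le N-1$. In the first example it gives $\varepsilon e^{W_{1,\varepsilon}}\approx\delta_1^2|y|^{-4}+\varepsilon$ on $\cA_{2j}$, which is small in $L^p$ for $p$ near $1$.

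A minor point: the uniform exponent $\frac1{4N}$ does not come from $\delta_{N-1}/\delta_N$. Those ratio terms are $O(\varepsilon^{1/(2l)})$ by Lemma~\ref{lem:theta_small_as}. The exponent is dictated by the Taylor error $\delta_{l,j}|y|$, which contributes $\delta_N^{(2-p)/p}=\varepsilon^{\frac{2-p}{p\alpha_N}}$, with $\alpha_N\le 4N$ and equality for $\mathbf G_2$.
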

	\begin{proof} By the definition, the $i$-th component of the error term can be expressed as follows: 
		\begin{equation*}
			\begin{split}
				R^i_{\xi,\varepsilon} 
				&=\Big(2\varepsilon V_i e^{W_{i,\varepsilon}}-\sum_{j=1}^m  \chi_j e^{-\varphi_j} |y_{\xi_j}|^{\alpha_i-2} e^{U^i_j}\Big)\\
				&\quad- \frac{1}{2} \sum_{\substack{i'=1 \\ i' \neq i}}^N (-a_{ii'})\Big( 2\varepsilon V_{i'} e^{W_{i',\varepsilon}}- \sum_{j=1}^m 
				\chi_j e^{-\varphi_j} |y_{\xi_j}|^{\alpha_{i'}-2} e^{U^{i'}_j} \Big)  \\
				&  \quad-\overline{ \Big(2\varepsilon V_i e^{W_{i,\varepsilon}}-\sum_{j=1}^m  \chi_j e^{-\varphi_j} |y_{\xi_j}|^{\alpha_i-2} e^{U^i_j}\Big)}  \\
				&\quad+ \overline{\frac 1 2\sum_{\substack{i'=1 \\ i' \neq i}}^N (-a_{ii'})\Big( 2\varepsilon V_{i'} e^{W_{i',\varepsilon}}- \sum_{j=1}^m 
					\chi_j e^{-\varphi_j} |y_{\xi_j}|^{\alpha_{i'}-2} e^{U^{i'}_j} \Big)}.
			\end{split}
		\end{equation*}
		For any $p>1$, the H\"{o}lder's inequality  together with  Lemma \ref{lem:diif_e^W_sum_e^u_as} yields that 
		\begin{align*}
			&\quad	\Big|\int_{\Sigma}\Big(\sum_{j=1}^m  \chi_j e^{-\varphi_j} |y_{\xi_j}|^{\alpha_i-2} e^{U^i_j}-2\varepsilon  V_i e^{W_{i,\varepsilon}} \Big)dv_g \Big|\\
			&\leq  \Big\|\sum_{j=1}^m  \chi_j e^{-\varphi_j} |y_{\xi_j}|^{\alpha_i-2} e^{U^i_j}-\varepsilon  V_i e^{W_{i,\varepsilon}}\Big\|_{p}\leq C \varepsilon^{\frac 1 {4N}\frac{2-p}{p}},
		\end{align*}
		for some constant $C>0$,	in view of $|\Sigma|_g =1.$
		Thus, we conclude~Lemma \ref{lem:est_error_as}. 
	\end{proof}
	The following lemma shows that the higher order linear operator $\cS_{\xi,\varepsilon}$ is bounded on $\cH$ and the operator norm vanishes as $\varepsilon\rightarrow 0$. 
	\begin{lemma}
		There exist $p_0 > 1$ and $\varepsilon_0 > 0$ such that for any $p \in (1, p_0)$ and $\varepsilon \in (0, \varepsilon_0)$,
		\[
		\| \cS_{\xi,\varepsilon}(\bphi) \|_p = O\left( \varepsilon^{\frac 1 {4N}\frac{2-p}{ p}} \| \bphi \|  \right), \text{ for }\bphi\in \cH.
		\]
	\end{lemma}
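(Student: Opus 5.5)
We estimate each component $S^i_{\xi,\varepsilon}(\bphi)$ separately; we treat the diagonal part $(2\varepsilon V_i e^{W_{i,\varepsilon}}-\sum_j\chi_j e^{-\varphi_j}|y_{\xi_j}|^{\alpha_i-2}e^{U^i_j})\phi_i$ and the off-diagonal parts $(\varepsilon V_{i'}e^{W_{i',\varepsilon}}-\frac12\sum_j\chi_j e^{-\varphi_j}|y_{\xi_j}|^{\alpha_{i'}-2}e^{U^{i'}_j})\phi_{i'}$ the same way. Each is a product of a coefficient
\[
D_{l}:=2\varepsilon V_l e^{W_{l,\varepsilon}}-\sum_{j=1}^m\chi_j e^{-\varphi_j}|y_{\xi_j}|^{\alpha_l-2}e^{U^l_j}, \qquad l=1,\dots,N,
\]
with $\phi_l$, minus its average. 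Since $|\Sigma|_g=1$, Hölder gives $\|\overline{f}\|_p\le \|f\|_1\le\|f\|_p$. It therefore suffices to show
\[
\|D_l\phi_l\|_p=O\big(\varepsilon^{\frac1{4N}\frac{2-p}{p}}\|\phi_l\|\big)
\]
for each $l$, up to shrinking $p_0$.

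First, apply Hölder with a slightly larger exponent: for $r>1$ close to $1$ and $s=\frac{pr}{r-1}$,
\[
\|D_l\phi_l\|_p\le \|D_l\|_{pr}\,\|\phi_l\|_{s}\le C_s\|D_l\|_{pr}\|\phi_l\|.
\]
The last inequality is the Sobolev (Moser–Trudinger) embedding $\oH\hookrightarrow L^s(\Sigma)$ for every finite $s$. So the whole problem reduces to the $L^q$ estimate of $D_l$ with $q=pr$, which is exactly the quantity controlled in Lemma~\ref{lem:diif_e^W_sum_e^u_as}. That lemma was already invoked in the proof of Lemma~\ref{lem:est_error_as} in the form $\|D_l\|_q\le C\varepsilon^{\frac1{4N}\frac{2-q}{q}}$ for $q\in(1,p_0)$.

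The only issue is the loss in the exponent when passing from $p$ to $pr$. The exponent $\frac{2-q}{q}$ depends continuously on $q$, so for fixed $p<p_0$ we can choose $r>1$ with $pr<p_0$. This yields $\varepsilon^{\frac1{4N}\frac{2-pr}{pr}}$, which is slightly worse than the claimed rate.

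We see two ways to recover the exact exponent $\frac{2-p}{p}$. The first is to restate the conclusion for a slightly smaller $p_0$ and absorb the change into the exponent. The cleaner second option is to reprove the bound on $D_l\phi_l$ directly by the same annular decomposition used in Lemma~\ref{lem:diif_e^W_sum_e^u_as}. On each annulus $\cA_{lj}$ we write
\[
D_l=\chi_j e^{-\varphi_j}|y_{\xi_j}|^{\alpha_l-2}e^{U^l_j}\,(\Theta_{lj}-1)\circ(\delta_{l,j}^{-1}y_{\xi_j}).
\]
We then use Lemma~\ref{lem:theta_small_as}, which gives $|\Theta_{lj}-1|\le C(\delta_{l,j}|y|+\varepsilon^{c})$ there. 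Away from the annuli, where the bubble is negligible, we use the pointwise decay of the bubble. In this version $\phi_l$ enters only through $\|\phi_l\|_{s}$ for large $s$, and the power of $\varepsilon$ comes from the scaling
\[
\big\|\delta^{-2}\tfrac{|y/\delta|^{\alpha-2}}{(1+|y/\delta|^\alpha)^2}\big\|_{L^{q}}\sim\delta^{\frac{2-2q}{q}}
\]
multiplied by the gain $\delta_{l,j}$ or $\varepsilon^{c}$. This mirrors the computation giving $O(\delta_{i,j}^{\frac{2-q}{q}})$ in Step~3 of Lemma~\ref{lem:invertible_as}.

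We expect the main obstacle to be this bookkeeping of exponents in the annular decomposition, in particular the non-symmetric interaction terms for $\mathbf B_N,\mathbf C_N,\mathbf G_2$ in the last component. We plan to rely on the estimate of Lemma~\ref{lem:diif_e^W_sum_e^u_as}, which has already been established uniformly on these regions, rather than redo it. Linearity in $\bphi$ then gives the stated operator bound on $\cH$.
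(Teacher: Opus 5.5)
Your main line is the paper's route: split each $S^i_{\xi,\varepsilon}(\bphi)$ into terms $D_l\phi_l$ minus their means, then use H\"older, $\oH\hookrightarrow L^s$, and Lemma~\ref{lem:diif_e^W_sum_e^u_as}. Your H\"older split $\|D_l\phi_l\|_p\le\|D_l\|_{pr}\|\phi_l\|_{\frac{pr}{r-1}}$ is correct. As you note, however, it only gives $\varepsilon^{\frac1{4N}\frac{2-pr}{pr}}$, and the gap is your claim that the exact exponent $\frac1{4N}\frac{2-p}{p}$ can then be recovered.

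\textbf{First option.} It does not do this. For the given $p$ one has $\frac{2-pr}{pr}<\frac{2-p}{p}$, so $\varepsilon^{\frac1{4N}\frac{2-pr}{pr}}\gg\varepsilon^{\frac1{4N}\frac{2-p}{p}}$ as $\varepsilon\to0$, and shrinking $p_0$ changes nothing. Absorbing the difference would require slack in the $L^{pr}$ bound on $D_l$, which Lemma~\ref{lem:diif_e^W_sum_e^u_as} does not give.

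\textbf{Second option.} It hits the same wall. Since $\phi_l\in\oH$ is not bounded, it still enters through $\|\phi_l\|_s$ with $s<\infty$, so the weight must be taken in $L^q$ with $q>p$. Your own scaling $\delta^{\frac{2-2q}{q}}\cdot\delta=\delta^{\frac{2-q}{q}}$ then reproduces $q$ in place of $p$. In the $\mathbf G_2$ case, where $\delta_2=\varepsilon^{1/8}=\varepsilon^{1/(4N)}$, the term $\delta_{2,j}|y|$ allowed by Lemma~\ref{lem:theta_small_as} leaves no room at all. Also, on the annuli the factor is $e^{\Theta_{lj}}-1$, not $\Theta_{lj}-1$.

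Since you end by relying on Lemma~\ref{lem:diif_e^W_sum_e^u_as} rather than redoing the annular estimates, what you actually prove is $\|\cS_{\xi,\varepsilon}(\bphi)\|_p=O(\varepsilon^{\frac1{4N}\frac{2-pr}{pr}}\|\bphi\|)$ for any $r>1$ with $pr<p_0$.

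\textbf{Comparison with the paper.} The paper reaches the exact exponent only by bounding $\|D_i\phi_i\|_p$ by $\|D_i\|_p\|\phi_i\|_{\frac{p}{p-1}}$. That is H\"older for $\|D_i\phi_i\|_1$: it is valid for the mean terms, where the exact rate does hold, but not for the $L^p$ norm of the product. So the paper's argument does not justify more than yours.

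The weaker bound is all that is used downstream. In Proposition~\ref{prop:fixed_point} the $\cS$-term only needs a factor $\varepsilon^{a}$ with some $a>0$, so that $C|\log\varepsilon|\,\varepsilon^{a}$ is small, both for mapping the ball into itself and for the contraction. The clean fix is to state the lemma with $\frac{2-q}{q}$, for some $q\in(p,p_0)$, in the exponent. Obtaining the stated rate would need a genuinely sharper $L^q$ bound on $D_l$, for instance using the $\fR_k$-invariance at $\xi_j\in\Sigma_0$ to remove the first-order term $\delta_{l,j}|y|$ in $\Theta_{lj}$. Neither you nor the paper carries this out.
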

	\begin{proof}
		Applying Lemma~\ref{lem:diif_e^W_sum_e^u_as} along with H\"{o}lder's inequality and the Sobolev inequality, we obtain
		\begin{equation*}
			\begin{split}
				\| \cS_{\xi,\varepsilon}(\bphi) \|_p &= \sum_{i=1}^N \| S^{i}_{\xi,\varepsilon}(\bphi) \|_{p}
				\\
				&=O\Big( \sum_{i=1}^{N} \Big\| \Big(-\sum_{j=1}^m\chi_j e^{-\varphi_j}|y_{\xi_j}|^{\alpha_i-2} e^{U^i_j}+2 \varepsilon V_ie^{ W_{i,\varepsilon}}\Big) \phi_i\Big\|_{p}\Big)\\
				&\quad+ O\Big( \sum_{i=1}^N \Big| \int_{\Sigma} \Big(-\sum_{j=1}^m\chi_j e^{-\varphi_j}|y_{\xi_j}|^{\alpha_i-2} e^{U^i_j}+2 \varepsilon V_ie^{ W_{i,\varepsilon}} \Big)\phi_i  dv_g \Big|\Big)\\
				&= O\Big( \sum_{i=1}^N \Big( \Big\|2 \varepsilon V_ie^{W_{i,\varepsilon}}-\sum_{j=1}^m\chi_j e^{-\varphi_j}|y_{\xi_j}|^{\alpha_i-2} e^{U^i_j} \Big\|_{p} \Big) \| \phi_i\|_{\frac{p}{p-1}} \Big)  \\
				&= O\Big(\varepsilon^{\frac 1 {4N}\frac{2-p}{ p}} \sum_{i=1}^N \| \phi_i \| \Big) = O\Big(\varepsilon^{\frac{1}{4N}\frac{2-p}{ p}} \|\bphi\|\Big),
			\end{split}
		\end{equation*}
		as $\varepsilon\rightarrow 0$ for any $p>1$ sufficiently close to $1$. 
	\end{proof}

	To study the asymptotic behavior of  the non-linear part $\cN_{\xi,\varepsilon}$, we have the following lemma: 
	\begin{lemma}\label{lem:non_linear_term_as}
		There exist $s_0 > 1$ and $\varepsilon_0 > 0$ such that for any $p > 1$, $r > 1$ with $pr \in (1, s_0)$ and $\varepsilon \in (0, \varepsilon_0)$, 
		\[
		\|\mathcal{N}_{\xi,\varepsilon}(\bphi)\|_p = O\left( \varepsilon^{\frac 1 {2N}\frac{1-pr}{pr}} \|\bphi\|^2\right) 
		\]
		and
		\[
		\|\mathcal{N}_{\xi,\varepsilon}(\bphi^0) - \mathcal{N}_{\xi,\varepsilon}(\bphi^1)\|_p = O \left( \varepsilon^{\frac 1 {2N}\frac{1-pr}{pr}} \|\bphi^0 - \bphi^1\| (\|\bphi^1\| + \|\bphi^0\|) \right) 
		\]
		hold true for any   $\bphi, \bphi^0, \bphi^1 \in \{ \bphi=(\phi_1,\phi_2) \in \cH: \|\phi_i\| \leq 1, i=1, \dots, N \}$. 
	\end{lemma}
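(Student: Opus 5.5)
Since each component of $\mathcal{N}_{\xi,\varepsilon}(\bphi)$ is a fixed linear combination (coefficients $2$ and $-a_{ii'}$) of terms $\varepsilon V_{i'}e^{W_{i',\varepsilon}}(e^{\phi_{i'}}-1-\phi_{i'})$ minus their averages, and the averages are controlled in $L^p$ by the $L^1\le L^p$ norm of the same function (because $|\Sigma|_g=1$), it suffices to prove, for each fixed $i$,
\[
\big\|\varepsilon V_i e^{W_{i,\varepsilon}}(e^{\phi_i}-1-\phi_i)\big\|_p = O\big(\varepsilon^{\frac 1{2N}\frac{1-pr}{pr}}\|\phi_i\|^2\big),
\]
together with the analogous Lipschitz bound. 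I would use the pointwise inequalities $|e^{a}-1-a|\le |a|^2 e^{|a|}$ and $|e^{a}-a-e^{b}+b|\le |a-b|(|a|+|b|)e^{|a|+|b|}$. The proof then reduces to three factors: an $L^{pr}$ bound for the weight $\varepsilon V_ie^{W_{i,\varepsilon}}$, an exponential-integrability bound, and an $L^q$ bound for the quadratic factor.

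\textbf{Splitting by H\"older.} Choose exponents $r,s,t$ with $\frac1r+\frac1s+\frac1t=1$, taking $s$ and $t$ large. H\"older gives
\[
\|\varepsilon V_ie^{W_{i,\varepsilon}}\phi_i^2e^{|\phi_i|}\|_p\le \|\varepsilon V_ie^{W_{i,\varepsilon}}\|_{pr}\,\|\phi_i\|^2_{2ps}\,\|e^{|\phi_i|}\|_{pt}.
\]
The Sobolev embedding $\oH\hookrightarrow L^q$ for every $q<\infty$ gives $\|\phi_i\|_{2ps}\le C\|\phi_i\|$. The Moser--Trudinger inequality stated above, applied to $\pm pt\,\phi_i$, gives
\[
\int_\Sigma e^{pt|\phi_i|}\,dv_g\le 2\exp\Big(\tfrac{p^2t^2}{8\pi}\|\phi_i\|^2+C\Big),
\]
which is uniformly bounded on the unit ball $\|\phi_i\|\le 1$. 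The Lipschitz estimate follows the same way, with $|\phi_i^0-\phi_i^1|(|\phi_i^0|+|\phi_i^1|)$ replacing $\phi_i^2$ and $e^{|\phi_i^0|+|\phi_i^1|}$ handled by Cauchy--Schwarz and Moser--Trudinger again.

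\textbf{The main step.} The key estimate is
\[
\|\varepsilon V_ie^{W_{i,\varepsilon}}\|_{q}=O\big(\varepsilon^{\frac1{2N}\frac{1-q}{q}}\big)\quad\text{for } q=pr\in(1,s_0).
\]
I would deduce it from Lemma~\ref{lem:diif_e^W_sum_e^u_as}, which compares $2\varepsilon V_ie^{W_{i,\varepsilon}}$ with $\sum_j\chi_je^{-\varphi_j}|y_{\xi_j}|^{\alpha_i-2}e^{U^i_j}$ in $L^q$, with an error that is $o(1)$ for $q$ close to $1$. By \eqref{eq:L_p_exp_a}, the $L^q$ norm of each bubble term is $O(\delta_{i,j}^{2(1-q)/q})$. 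Since $\delta_i\ge\varepsilon^{\frac{N+2-i}{\alpha_i}}$, and $\frac{N+2-i}{\alpha_i}\le\frac{N+1}{2}$ for $i\ge1$ (at worst $\delta_1=\varepsilon^{(N+1)/2}$ or $\varepsilon^{N/2}$), I get $\delta_{i,j}^{2(1-q)/q}\lesssim \varepsilon^{(N+1)\frac{1-q}{q}}$.

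This is where I expect the real obstacle. Taken at face value, this crude bound gives the exponent $(N+1)\frac{1-q}{q}$, which is more negative than the stated $\frac{1}{2N}\frac{1-q}{q}$. Two things need care:
\begin{itemize}
\item checking exactly what Lemma~\ref{lem:diif_e^W_sum_e^u_as} (in the appendix) provides, and possibly re-deriving a sharper localized estimate on the annuli $\mathcal{A}_{ij}$ using $\Theta_{ij}$ from Lemma~\ref{lem:theta_small_as};
\item otherwise, simply tracking the true power of $\varepsilon$: any bound of the form $C\varepsilon^{-\kappa(q-1)}$ with $\kappa$ fixed suffices for the contraction argument once $s_0$ is close to $1$, since it is beaten by the $|\log\varepsilon|$ inverse bound and the error bound of Lemma~\ref{lem:est_error_as}.
\end{itemize}
I would therefore verify the exponent bookkeeping against \eqref{def:delta_i}, and adjust $s_0$ so that the final power matches the statement. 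The rest of the argument is routine.
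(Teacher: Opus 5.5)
The gap is in your last step, ``adjust $s_0$ so that the final power matches the statement''; it cannot work. Up to that point your route is the paper's. The paper proves this lemma by deferring to Lemma~\ref{lem:non_linear_basic}, which uses the same pointwise bounds, the H\"older split into $L^{pr}$, $L^{ps}$, $L^{pt}$ factors, Sobolev and Moser--Trudinger, and an $L^{pr}$ bound on the weight from Lemma~\ref{lem:diif_e^W_sum_e^u_as} and \eqref{eq:L_p_exp_a}.

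Write $\delta_1=\varepsilon^{\kappa/2}$, so $\kappa=N$ for $\mathbf{A}_N,\mathbf{C}_N,\mathbf{G}_2$ and $\kappa=N+1$ for $\mathbf{B}_N$. The mismatch between $\varepsilon^{-\kappa\frac{q-1}{q}}$ and $\varepsilon^{-\frac{1}{2N}\frac{q-1}{q}}$ is in the coefficient of $\frac{q-1}{q}$. So for every fixed $q=pr>1$ their ratio blows up as $\varepsilon\to0$, and shrinking $s_0$ changes nothing.

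Your first bullet cannot help either. By \eqref{eq:def_theta_as}, near $\xi_j$ one has $2\varepsilon V_ie^{W_{i,\varepsilon}}=e^{-\varphi_j}|y_{\xi_j}|^{\alpha_i-2}e^{U^i_j}e^{\Theta_{ij}(y_{\xi_j}/\delta_{i,j})}$. Since $\Theta_{ij}$ is bounded on $\frac{1}{\delta_{i,j}}\mathcal{A}_{ij}$ (Lemma~\ref{lem:theta_small_as}), $\|\varepsilon V_ie^{W_{i,\varepsilon}}\|_q$ is bounded below as well as above by multiples of $\delta_i^{-2\frac{q-1}{q}}$.

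In fact the estimate as printed is false, so no argument can close this gap. Let $\eta$ be a cut-off equal to $1$ on $[0,1]$ and to $0$ on $[2,\infty)$. Set $\eta_\varepsilon=\eta(|y_{\xi_1}|/\delta_{1,1})$, $\phi_1=c(\eta_\varepsilon-\overline{\eta_\varepsilon})$ and $\bphi=(\phi_1,0,\dots,0)$. By conformal invariance of the Dirichlet integral, $\|\bphi\|\asymp c$ uniformly in $\varepsilon$.

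On $\{|y_{\xi_1}|\le\delta_{1,1}\}$ we have $e^{\phi_1}-1-\phi_1\ge c^2/8$ and $2\varepsilon V_1e^{W_{1,\varepsilon}}\ge c_0\delta_1^{-2}$, while the mean term in $N^1_{\xi,\varepsilon}$ is $O(c^2)$. Hence $\|N^1_{\xi,\varepsilon}(\bphi)\|_p\ge c_1\varepsilon^{-\kappa\frac{p-1}{p}}\|\bphi\|^2-C\|\bphi\|^2$. This contradicts the claim whenever $\kappa\frac{p-1}{p}>\frac{1}{2N}\frac{pr-1}{pr}$, for instance for any $p\in(1,s_0)$ and $r>1$ close to $1$.

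The paper reaches its exponent only through the identity $\delta_{i,j}^{2-2pr}=\varepsilon^{\frac{2-2pr}{4N}}$ at the end of the proof of Lemma~\ref{lem:non_linear_basic}. That identity is incompatible with \eqref{def:delta_i}: already $\delta_1=\varepsilon^{\kappa/2}$ is far smaller than $\varepsilon^{1/(4N)}$.

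What your argument does prove is the lemma with $\varepsilon^{\frac{1}{2N}\frac{1-pr}{pr}}$ replaced by $\varepsilon^{-\kappa\frac{pr-1}{pr}}$. Your second bullet is the right repair: Proposition~\ref{prop:fixed_point} still works under the condition $\frac{2-p}{4Np}>\kappa\frac{pr-1}{pr}$, which holds for $p,r$ close to $1$. State and prove that corrected version rather than trying to match the printed exponent.
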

	\begin{proof}
		Analogous to the proof of Lemma~\ref{lem:non_linear_basic} we can obtain the result. The detail is omitted here. 
	\end{proof}
	Next, we will use the fixed point theorem to solve the non-linear problem~\eqref{eq:toda_as_nonlinear}.
	\begin{proposition}
		\label{prop:fixed_point}
		Given 
		$p_0>1$, $\varepsilon_0>0$ and $R_0>0$  such that 
		for any $ p \in (1, p_0) $, $ \varepsilon \in (0, \varepsilon_0) $ and $ R \geq R_0 $,  there exists a unique  $ \bphi_\varepsilon = (\phi^1_\varepsilon,\dots \phi^N_\varepsilon) \in \cH_{k} $ for  $\varepsilon\in (0,\varepsilon_0)$, such that  $\bW_{\varepsilon}+\bphi_{\varepsilon}$ solves the Toda system~\eqref{eq:todaN} with the parameter $\varepsilon$, satisfying 
		\[ \|\bphi_{\varepsilon}\|\leq R \varepsilon^{\frac 1 {4N}\frac {2-p} {p}}|\log \varepsilon|. \]
	\end{proposition}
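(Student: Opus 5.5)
We recast \eqref{eq:toda_as_nonlinear} as a fixed point problem in $\cH_{k}$ and apply the contraction mapping principle. Applying $i^*$ to both sides and using the Proposition on invertibility of $\cL$, $\bphi$ solves \eqref{eq:toda_as_nonlinear} if and only if
\[
\bphi=\cT(\bphi):=\cL^{-1}\Big(i^*\big(\cS_{\xi,\varepsilon}(\bphi)+\cN_{\xi,\varepsilon}(\bphi)+\cR_{\xi,\varepsilon}\big)\Big).
\]
We will show that $\cT$ is a contraction on the ball
\[
\cB_R:=\Big\{\bphi\in\cH_{k}:\ \|\bphi\|\le R\,\varepsilon^{\frac1{4N}\frac{2-p}{p}}|\log\varepsilon|\Big\}.
\]

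First we check that $\cT$ maps $\cH_{k}$ into $\cH_{k}$. The approximate solution $\bW_\varepsilon$ is $\fR_k$-invariant. Indeed, the points $\xi_j=\xi^*_j$ lie in $\Sigma_0$, the potentials $V_i$ are $\fR_k$-invariant, and the isothermal charts around a fixed point of $\fR_k$ can be chosen equivariantly: $\fR_k$ is an isometry fixing $\xi_j$, so it acts as a rotation in the chart. Hence $|y_{\xi_j}|$, $\varphi_j$, $U^i_j$, $PU^i_j$ and $G^g(\cdot,\xi_j)$ are all invariant, the last two by uniqueness of the Neumann problems. Since the Laplacian commutes with isometries, $\cS_{\xi,\varepsilon}(\bphi)$, $\cN_{\xi,\varepsilon}(\bphi)$ and $\cR_{\xi,\varepsilon}$ are invariant whenever $\bphi$ is, and each has zero mean. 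Consequently $i^*$ of them lies in $\cH_{k}$, and $\cL^{-1}$ preserves $\cH_{k}$.

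Next we estimate $\cT$ on $\cB_R$. By the continuity of $i^*:L^p\to\oH$, that is $\|i^*(f)\|\le C\|f\|_p$, and $\|\cL^{-1}\|\le C|\log\varepsilon|$, we get
\[
\|\cT(\bphi)\|\le C|\log\varepsilon|\Big(\|\cS_{\xi,\varepsilon}(\bphi)\|_p+\|\cN_{\xi,\varepsilon}(\bphi)\|_p+\|\cR_{\xi,\varepsilon}\|_p\Big).
\]
Lemma~\ref{lem:est_error_as} gives $\|\cR_{\xi,\varepsilon}\|_p\le C\varepsilon^{\frac1{4N}\frac{2-p}{p}}$. The lemma on $\cS_{\xi,\varepsilon}$ gives $C\varepsilon^{\frac1{4N}\frac{2-p}{p}}\|\bphi\|$, and multiplying by $|\log\varepsilon|$ this is still $o(1)\|\bphi\|$. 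Lemma~\ref{lem:non_linear_term_as} gives $C\varepsilon^{\frac1{2N}\frac{1-pr}{pr}}\|\bphi\|^2$. Choosing $p$ and $r$ close to $1$ so that this negative power is small compared with the gain $\varepsilon^{\frac1{4N}\frac{2-p}{p}}|\log\varepsilon|$ in $\|\bphi\|^2\le R^2\varepsilon^{\frac1{2N}\frac{2-p}{p}}|\log\varepsilon|^2$, the quadratic term becomes $o(\varepsilon^{\frac1{4N}\frac{2-p}{p}}|\log\varepsilon|)$. Hence
\[
\|\cT(\bphi)\|\le C_0\varepsilon^{\frac1{4N}\frac{2-p}{p}}|\log\varepsilon|+o(1)R\,\varepsilon^{\frac1{4N}\frac{2-p}{p}}|\log\varepsilon|,
\]
and taking $R_0>2C_0$ gives $\cT(\cB_R)\subset\cB_R$. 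For the contraction, linearity of $\cS_{\xi,\varepsilon}$ and the Lipschitz bound in Lemma~\ref{lem:non_linear_term_as} give
\[
\|\cT(\bphi^0)-\cT(\bphi^1)\|\le C|\log\varepsilon|\Big(\varepsilon^{\frac1{4N}\frac{2-p}{p}}+\varepsilon^{\frac1{2N}\frac{1-pr}{pr}}R\,\varepsilon^{\frac1{4N}\frac{2-p}{p}}|\log\varepsilon|\Big)\|\bphi^0-\bphi^1\|\le\tfrac12\|\bphi^0-\bphi^1\|.
\]
The points of $\cB_R$ satisfy $\|\phi_i\|\le1$ for small $\varepsilon$, as Lemma~\ref{lem:non_linear_term_as} requires. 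Banach's fixed point theorem then yields a unique $\bphi_\varepsilon\in\cB_R$, and $\bW_\varepsilon+\bphi_\varepsilon$ solves \eqref{eq:todaN}.

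The main obstacle is the exponent bookkeeping. We must ensure that, for a common choice of $p$ and $r$ near $1$, the loss $\varepsilon^{\frac1{2N}\frac{1-pr}{pr}}$ in the nonlinear estimate is beaten by the smallness of the ball. This needs
\[
\tfrac1{2N}\tfrac{pr-1}{pr}<\tfrac1{4N}\tfrac{2-p}{p},
\]
which holds as $p,r\to1$, since the left side tends to $0$ and the right side tends to $\frac1{4N}$. A secondary point is justifying the $\fR_k$-invariance of $\bW_\varepsilon$ through the equivariant choice of isothermal charts at the fixed points.
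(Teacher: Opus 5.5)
Your proposal is correct and follows the same route as the paper. You run a contraction argument for $\cT$ on the ball of radius $R\varepsilon^{\frac{1}{4N}\frac{2-p}{p}}|\log\varepsilon|$ in $\cH_{k}$, using the $|\log\varepsilon|$ bound on the inverse linear operator, the estimates for $\cR_{\xi,\varepsilon}$, $\cS_{\xi,\varepsilon}$ and $\cN_{\xi,\varepsilon}$, and exactly the paper's exponent condition $\frac{2-p}{4Np}+\frac{1-pr}{2Npr}>0$.

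You also add two checks the paper leaves implicit. The first is that $\bW_\varepsilon$ is $\fR_k$-invariant, so $\cT$ maps $\cH_{k}$ into itself. The second is that the ball lies where $\|\phi_i\|\le 1$, which Lemma~\ref{lem:non_linear_term_as} requires. As in the paper, your $\varepsilon_0$ actually depends on $R$, because the quadratic term carries a factor $R^2$.
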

	\begin{proof}
		We define the operator 	\[ \cT_{\xi,\varepsilon}(\bphi):= \cL_{\xi,\varepsilon}^{-1}(\cS_{\xi,\varepsilon}(\bphi)+\cN_{\xi,\varepsilon}(\bphi)+\cR_{\xi,\varepsilon}) \]
		on $\cH_{k}.$ To find $\bphi_{\varepsilon}\in \cH_{k}$  such that $\bW_{\varepsilon}+\bphi_{\varepsilon}$ solves~\eqref{eq:todaN}, it is sufficient to find a fixed point of $\cT_{\xi,\varepsilon}: \cH_{k}\rightarrow\cH_{k}$. 
		We choose $p_0, r>1$ sufficiently close to $1$ such that 
		\[ \frac{2-p_0}{4N p_0}+ \frac{1-p_0r}{2N p_0r}>0. \]
		By Lemma~\ref{lem:invertible_as} and Lemma~\ref{lem:est_error_as}-Lemma~\ref{lem:non_linear_term_as}, we deduce that there exist $R_0>1$ sufficiently large and $\varepsilon_1>0$ sufficiently small such that for any $\varepsilon\in (0,\varepsilon_1), R\geq R_0, p\in (1,p_0)$
		\begin{equation*}
			\begin{split}
				\|\cT_{\xi,\varepsilon}(\bphi)\|&\leq C|\log\varepsilon|\left( \|\cS_{\xi,\varepsilon}(\bphi)\|_p+\|\cN_{\xi,\varepsilon}(\bphi)\|_p+\|\cR_{\xi,\varepsilon}\|_p\right)\\
				&\leq C|\log \varepsilon|\left(  \varepsilon^{\frac 1 {4N}\frac{2-p}{p}} \| \bphi \|+ \varepsilon^{\frac 1 {2N}\frac{1-pr}{pr}} \|\bphi\|^2 + \varepsilon^{\frac 1 {4N}\frac{2-p}{p}} \right)\\
				&\leq  R \varepsilon^{\frac 1 {4N}\frac{2-p}{p}}|\log \varepsilon|,
			\end{split}
		\end{equation*}
		for any $\bphi\in \{\bphi\in\cH: \|\bphi\|\leq R \varepsilon^{\frac 1 {4N}\frac{2-p}{p}}|\log \varepsilon|\}. $
		Next, we will prove that $\cT_{\xi,\varepsilon}$ is a contract mapping. 
		Given $\bphi^0,\bphi^1\in \{\bphi\in\cH: \|\bphi\|\leq R \varepsilon^{\frac 1 {4N}\frac{2-p}{p}}|\log \varepsilon|\},$ Lemma~\ref{lem:invertible_as} and  Lemma~\ref{lem:est_error_as}-\ref{lem:non_linear_term_as} imply that there exists $\varepsilon_0\in(0,\varepsilon_1)$ such that for any $\varepsilon\in (0,\varepsilon_0)$
		\begin{equation*}
			\begin{split}
				&	\|\cT_{\xi,\varepsilon}(\bphi^1)-\cT_{\xi,\varepsilon}(\bphi^0)\|\leq C |\log\varepsilon|\left( \|S_{\xi,\varepsilon}(\bphi^0-\bphi^1)\|_p+\|\cN_{\xi,\varepsilon}(\bphi^0)-\cN_{\xi,\varepsilon}(\bphi^1)\|_p\right)\\
				&\leq C|\log\varepsilon|\left(\varepsilon^{\frac 1 {4 N} \frac{2-p}{p}} \| \bphi^0-\bphi^1 \| +\varepsilon^{\frac 1 {2N}\frac{1-pr}{ pr}} (\|\bphi^0\|+\|\bphi^1\|)\|\bphi^0-\bphi^1\| \right)\\
				&\leq C\left( \varepsilon^{\frac 1 {4 N}\frac{2-p}{p}}|\log\varepsilon|+ \varepsilon^{\frac 1 {4N}\frac{2-p}{p}+\frac 1 {2N} \frac{1-pr}{pr}}|\log\varepsilon|^2\right)\|\bphi^0-\bphi^1\|\leq \frac 1 2 \|\bphi^0-\bphi^1\|.
			\end{split}
		\end{equation*}
		Utilizing the contraction mapping principle, we establish that for any $\varepsilon\in (0,\varepsilon_0)$ there exists a fixed point $\bphi_{\varepsilon}\in \left\{ \bphi\in \cH_{k}: \|\bphi\|\leq R \varepsilon^{\frac 1 {4N}\frac{2-p}{p}}|\log\varepsilon|\right\}$ for the operator $\cT_{\xi,\varepsilon}.$ 
	\end{proof}
	\section{Proof of main results}\label{sec:5}
	This section completes the proof of Theorem \ref{thm:main_asymmetric} by combining the results from the previous sections. By Proposition \ref{prop:fixed_point}, to finalize the construction of the blow-up solutions, it suffices to verify their properties (ii) and (iii).
	
	\begin{proof}
		[Proof of Theorem \ref{thm:main_asymmetric}]
		We firstly prove ii), that is   $$ \rho^{\varepsilon}=(\rho^\varepsilon_1,\dots,\rho^\varepsilon_N)\rightarrow (2\pi m \alpha_1, \dots, 2\pi m \alpha_N)$$ as $\varepsilon\rightarrow 0.$
		Applying Lemma~\ref{lem:theta_small_as} and Proposition~\ref{prop:fixed_point}, we  deduce that 
		\begin{equation*}
			\begin{split}
				\rho^\varepsilon_i&:=\int_{\Sigma} \varepsilon V_i e^{u_{i,\varepsilon}} dv_g \\
				&= \sum_{j=1}^m \int_{U(\xi_j)}  \frac{ \alpha_i^2 |y_{\xi_j}|^{\alpha_i-2} }{(\delta_{i,j}^{\alpha_i}+|y_{\xi_j}(x)|^{\alpha_i})^2} \exp \Big\{ \Big(PU^i_j-U^i_j+ \frac {a_{ii'}}2\sum_{\sumi}^N PU^{i'}_j\Big)  \\
				& \quad+\sum^m_{\substack{j'=1
						\\
						j'\neq j}} \Big(PU^{i}_{j'}+\frac {a_{ii'}}2 \sum_{\sumi}^N PU^{i'}_{j'}\Big)+ \log V_i +\log(2\varepsilon)-(\alpha_i-2) \log |y_{\xi_j}|\\
				&\quad+ \phi_{1,\varepsilon}\Big\} dv_g +\mathcal{O}(\varepsilon)= \sum_{j=1}^m \int_{\Omega_{ij}}  \frac{\alpha_i^2|y|^{\alpha_i-2}}{(1+|y|^{\alpha_i})^2} e^{\Theta_{ij}} dy +o(1)\\
				&=\sum_{j=1}^m \int_{\Omega_{ij}}  \frac{\alpha_i^2|y|^{\alpha_i-2}}{(1+|y|^{\alpha_i})^2} (1+\cO(\delta_{i,j}|y|+\varepsilon^{\frac 1{2i}}) ) dy +o(1)\\
				&=\sum_{j=1}^m \frac{\alpha_i\varrho(\xi_j)}{4} +o(1)=\sum_{j=1}^m \frac{\alpha_i\varrho(\xi^*_j)}{4} +o(1)= 2\pi \alpha_i m +o(1),
		\end{split}	\end{equation*}
		where we applied $\xi^*_j\in \intsigma$,  the integrals 
		$\int_{\R^2} \frac {1}{(1+|y|^2)^2} dy=\pi$ and $\int_{\R^2} \frac{|y|^2}{(1+|y|^4)^2} dy =\frac \pi 2 . $ 	Based on the result in Proposition~\ref{prop:fixed_point},  Theorem~\ref{thm:main_asymmetric} is concluded. 
		
		To prove the property iii), we introduce an inequality $|e^s-1|\leq C e^{|s|}|s|$ for all $s\in \R$. For any $\psi\in C^\infty(\Sigma)$, by Proposition \ref{prop:fixed_point} and Lemma  \ref{lem:diif_e^W_sum_e^u_as},  we have
		\begin{equation*}
			\begin{split}
				\int_{\Sigma} \varepsilon V_i e^{u_{i,\varepsilon}}\psi dv_g &= 	\int_{\Sigma} \varepsilon V_i e^{W_{i,\varepsilon}}\psi dv_g  +o(1)\\
				&=\sum_{j=1}^m\frac 1 2 \int_{\Sigma}  \chi_j e^{-\varphi_j} |y_{\xi_j}|^{\alpha_i-2} e^{U^i_j}  \psi dv_g\\
				& \quad +\cO\Big(\int_\Sigma\frac 1 2 \Big|\sum_j \chi_j e^{-\varphi_j} |y_{\xi_j}|^{\alpha_i-2} e^{U^i_j}  -\varepsilon  V_i e^{W_{i,\varepsilon}}\Big|\cdot|\psi| dv_g\Big)+o(1)\\
				&=\sum_{j=1}^m\frac {\alpha_i  \varrho(\xi^*_j)} 4  \psi(\xi_j^*)+o(1)= \sum_{j=1}^m 2\pi \alpha_i \psi(\xi_j^*)+o(1). 
			\end{split}
		\end{equation*}
		Thus, iii) is concluded.
	\end{proof}
	
	%%%%
	\appendix
	\section{Technique estimates}\label{App:A}
	In Appendix \ref{App:A}, we provide detailed technical estimates that support the main analysis.
	
	The following lemma shows the asymptotic behavior of the projected bubbles $PU^i_j$ for $j=1,\dots, m$ and $i=1,\dots, N $. 
	\begin{lemma}\label{lem:extension_PU_as}
		For  $j=1,\dots, m$ and $i=1,\dots, N $,  it holds:
		$$PU^i_j= \chi_j\cdot ( U^i_j-\log( 2\alpha_i^2 \delta_{i,j}^{\alpha_i}))+\frac {\alpha_i\varrho(\xi_j)} 2 H^g(\cdot,\xi_j)+
		\begin{cases}
			\mathcal{O}(\delta_{i,j}^{2}|\log \delta_{i,j}|) & i=1\\
			\mathcal{O}(\delta_{i,j}^{2}) & i\geq 2 
		\end{cases}, $$ 
		as $\delta_{i,j}\rightarrow 0.$
		For any $x\in \Sigma\setminus\{ \xi_j\}$, 
		$$PU^i_j= \frac {\alpha_i\varrho(\xi_j)} 2 G^g(\cdot,\xi_j)+ 	\begin{cases}
			\mathcal{O}(\delta_{i,j}^{2}|\log \delta_{i,j}|) & i=1\\
			\mathcal{O}(\delta_{i,j}^{2}) & i\geq 2 
		\end{cases},$$
		as $\delta_{i,j}\rightarrow 0.$
	\end{lemma}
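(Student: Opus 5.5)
We set $\eta:=PU^i_j-\chi_j\,(U^i_j-\log(2\alpha_i^2\delta_{i,j}^{\alpha_i}))-\frac{\alpha_i\varrho(\xi_j)}{2}H^g(\cdot,\xi_j)$ and show that $\eta$ solves a Neumann problem with small data. Then we invoke $L^p$/Green representation estimates. Note first that
\[
U^i_j-\log(2\alpha_i^2\delta_{i,j}^{\alpha_i})=-2\log(\delta_{i,j}^{\alpha_i}+|y_{\xi_j}|^{\alpha_i}),
\]
which converges to $-2\alpha_i\log|y_{\xi_j}|$ away from $\xi_j$. Since the interior point has $\varrho(\xi_j)=8\pi$, we get $\frac{\alpha_i\varrho}{2}\Gamma^g_{\xi_j}=-2\alpha_i\chi_j\log|y_{\xi_j}|$. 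In isothermal coordinates $\Delta_g=e^{-\varphi_j}\Delta$, so for $f:=\chi_j(-2\log(\delta^{\alpha_i}+|y|^{\alpha_i}))$ we compute
\[
-\Delta_g f=\chi_j e^{-\varphi_j}|y|^{\alpha_i-2}e^{U^i_j}+(\text{terms supported in } r_0\le|y|\le2r_0 \text{ involving } \nabla\chi_j,\Delta\chi_j).
\]
This uses $-\Delta w^{\alpha}_\delta=|y|^{\alpha-2}e^{w^\alpha_\delta}$. Subtracting the equation for $PU^i_j$ and the equation \eqref{eqR} for $H^g$ multiplied by $\frac{\alpha_i\varrho}{2}=4\alpha_i$, the leading parts cancel. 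We are left with
\[
-\Delta_g\eta=\mathcal{E}_1+c_\varepsilon \quad\text{in }\intsigma,
\]
where $\mathcal E_1$ is the difference between the cut-off terms built from $\log(\delta^{\alpha_i}+|y|^{\alpha_i})$ and those built from $\alpha_i\log|y|$. On the annulus $r_0\le|y|\le 2r_0$ we have $\log(\delta^{\alpha_i}+|y|^{\alpha_i})-\alpha_i\log|y|=O(\delta^{\alpha_i})$, together with its derivatives, so $\mathcal E_1=O(\delta_{i,j}^{\alpha_i})$ uniformly. The constant $c_\varepsilon$ combines $1\cdot 4\alpha_i$ with the average $\overline{\chi_je^{-\varphi_j}|y|^{\alpha_i-2}e^{U^i_j}}$. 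The boundary data vanish, since $\chi_j$ is supported away from $\partial\Sigma$ by the choice $U_{4r_0}(\xi_j)\cap\partial\Sigma=\emptyset$.

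Next we estimate the average. In local coordinates $dv_g=e^{\varphi_j}dy$, so
\[
\int_\Sigma\chi_je^{-\varphi_j}|y|^{\alpha_i-2}e^{U^i_j}dv_g=\int\chi(|y|/r_0)\frac{2\alpha_i^2\delta^{\alpha_i}|y|^{\alpha_i-2}}{(\delta^{\alpha_i}+|y|^{\alpha_i})^2}dy=4\pi\alpha_i+O(\delta^{\alpha_i}),
\]
where the tail $|y|\ge r_0$ contributes $O(\delta^{\alpha_i})$. Hence $c_\varepsilon=O(\delta^{\alpha_i})$. Solvability of the Neumann problem forces the total right-hand side to have zero mean, so this is consistent. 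Consequently $\eta-\bar\eta$ satisfies $\|\eta-\bar\eta\|_{C^{1}}\le C\delta^{\alpha_i}$ by elliptic regularity, and $\alpha_i\ge2$ gives $\delta^{\alpha_i}\le\delta^2$.

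It remains to control $\bar\eta$, which comes from the zero-mean normalizations. We have $\int PU=0$, and $\int H^g=-\frac{4}{\varrho}\int\chi_j\log\frac1{|y|}$. So
\[
\bar\eta=-\int\chi_j\big(-2\log(\delta^{\alpha_i}+|y|^{\alpha_i})+2\alpha_i\log|y|\big)dv_g .
\]
This reduces to estimating $\int_{\B_{2r_0}}\log(1+\delta^{\alpha_i}|y|^{-\alpha_i})e^{\varphi_j}\chi\,dy$. Rescaling $y=\delta z$ gives
\[
\delta^2\int_{|z|\le 2r_0/\delta}\log(1+|z|^{-\alpha_i})dz .
\]
Since $\log(1+|z|^{-\alpha})\sim|z|^{-\alpha}$ at infinity, the integral converges when $\alpha>2$, giving $O(\delta^2)$ for $i\ge2$. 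When $\alpha_1=2$ it diverges logarithmically, giving $O(\delta^2|\log\delta|)$. This is exactly the case split in the statement, and this mean computation is the step requiring the most care. The second assertion then follows: for fixed $x\ne\xi_j$ and $|y_{\xi_j}(x)|\ge c$, we have $-2\log(\delta^{\alpha_i}+|y|^{\alpha_i})=-2\alpha_i\log|y|+O(\delta^{\alpha_i})$, so $\chi_j(\cdots)+4\alpha_iH^g=\frac{\alpha_i\varrho}{2}(\Gamma^g+H^g)+O(\delta^{\alpha_i})=\frac{\alpha_i\varrho}{2}G^g+O(\delta^{\alpha_i})$, locally uniformly.
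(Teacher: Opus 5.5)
Your proposal is correct and takes essentially the same route as the paper. The paper also sets $\eta_{ij}=PU^i_j-\chi_j(U^i_j-\log(2\alpha_i^2\delta_{i,j}^{\alpha_i}))-\frac{\alpha_i\varrho(\xi_j)}{2}H^g(\cdot,\xi_j)$, shows that $-\Delta_g\eta_{ij}=\mathcal{O}(\delta_{i,j}^{\alpha_i})$ with vanishing Neumann data at interior points, computes $\int_\Sigma\eta_{ij}\,dv_g=2\int_\Sigma\chi_j\log(1+\delta_{i,j}^{\alpha_i}|y_{\xi_j}|^{-\alpha_i})\,dv_g$ by rescaling to obtain the extra $|\log\delta_{i,j}|$ exactly when $\alpha_1=2$, and concludes by elliptic regularity. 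One arithmetic slip to fix: for an interior point $\frac{\alpha_i\varrho(\xi_j)}{2}=4\pi\alpha_i$, not $4\alpha_i$, and this value is what makes the constant cancel against $\int_\Sigma\chi_je^{-\varphi_j}|y_{\xi_j}|^{\alpha_i-2}e^{U^i_j}\,dv_g=4\pi\alpha_i+\mathcal{O}(\delta_{i,j}^{\alpha_i})$.
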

	\begin{proof}
		Let $\eta_{ij}= PU^i_j-\chi_j\cdot( U^i_j-\log( 2\alpha_i^2 \delta_{i,j}^{\alpha_i})) -\frac {\alpha_i\varrho(\xi_j)} 2 H^g(\cdot,\xi_j) $.
		If $\xi_j\in \intsigma$,  $\partial _{\nu_g}\eta_{ij} \equiv 0$ on $\partial \Sigma$.  
		We observe that for any $x\in \partial\Sigma\cap U(\xi_j)$, 
		$\partial_{\nu_g} |y_{\xi_j}(x)|^2=- e^{-\frac 1 2 \hat{\varphi}_{\xi_j}(y)} \left. \frac{\partial}{\partial y_2} |y|^2\right|_{y=y_{\xi_j}(x)}= 0.$
		If $\xi_j\in \partial \Sigma$,  
		for any $x\in \partial \Sigma$, we have  as $\delta_{i,j}\rightarrow 0$
		\begin{align*}
			\partial_{\nu_g}\eta_{ij}(x)
			&= 2\partial_{\nu_g}\left( \chi_j \log\left( 1+\frac{\delta_{i,j}^{\alpha_i}}{|y_{\xi_j}(x)|^{\alpha_i}}\right)\right)\\
			&=2(\partial_{\nu_g}\chi_{j}) \frac{\delta_{i,j}^{\alpha_i}}{|y_{\xi_j}(x)|^{\alpha_i}} -2\chi_{j} \partial_{\nu_g}\log\left( 1+\frac{\delta_{i,j}^{\alpha_i}}{|y_{\xi_j}(x)|^{\alpha_i}}\right)+\mathcal{O}(\delta_{i,j}^{2\alpha_i})\\
			&=\mathcal{O}(\delta_{i,j}^{\alpha_i}). 
		\end{align*}
		Thus, for any $i=1, \dots, N ,j=1,\ldots,m$,	$
		\partial_{\nu_g} \eta_{ij} =
		\mathcal{O}(\delta_{i,j}^{\alpha_i}) \text{ as } \delta_{i,j}\rightarrow 0.$
		\begin{align*}
			\int_{\Sigma} \eta_{ij} dv_g &=  \int_{\Sigma} 2\chi_j  \log\left( 1+\frac{\delta_{i,j}^{\alpha_i}}{|y_{\xi_j}|^{\alpha_i}}\right) dv_g= 2\int_{B^{\xi_j}_{r_0}} e^{\hat{\varphi}_{\xi_j}(y)}  \log\left( 1+ \frac{\delta_{i,j}^{\alpha_i} }{ |y|^{\alpha_i}}\right)  dy \\
			&\quad+2 \int_{B^{\xi_j}_{2r_0}\setminus \B_{r_0}} \chi\Big(\frac{|y|}{r_0}\Big) e^{\hat{\varphi}_{\xi_j}(y)}
			\left( \frac{\delta_{i,j}^{\alpha_i} }{ |y|^{\alpha_i}}+\mathcal{O}(\delta_{i,j}^{2\alpha_i})\right) dy\\
			&= 2\delta_{i,j}^2  \int_{\frac 1 {\delta_{i,j}}B^{\xi_j}_{r_0}} \log \left( 1+ \frac 1 {|y|^{\alpha_i}}\right) e^{\hat{\varphi}(\delta_{i,j} y)} dy +\mathcal{ O}(\delta_{i,j}^{\alpha_i})\\
			&=2\delta_{i,j}^2 \int_{\frac 1{\delta_{i,j}}B_{r_0}^{\xi_j} } (1+\mathcal{O}(\delta_{i,j}|y|)) \log \left( 1+\frac 1 {|y|^{\alpha_i}}\right) dy+\mathcal{O}(\delta_{i,j}^{\alpha_i}) \\
			&=\begin{cases}
				\cO(\delta_{i,j}^2|\log \delta_{i,j}|),  & i=1\\
				\cO(\delta_{i,j}^2),  & i\geq  2
			\end{cases},
		\end{align*}
		where we applied the fact that 
		\begin{align*}
			0&\leq\int_{|y|<\frac {r_0}{\delta_{i,j}}}     \log \left( 1+\frac 1 {|y|^{\alpha_i}}\right) dy 
			=2\pi \int_0^{r_0/\delta_{i,j}} \log   \left( 1+\frac 1 {r^{\alpha_i}}\right)r  dr\\
			&\leq \pi \int_0^{r^2_0/(\tau\rho)^2} \log   \left( 1+\frac 1 {t}\right) dt \leq 2\pi\int_1^{r_0/\delta_{i,j}} r^{1-\alpha_i} dr +\mathcal{O}(1)\\
			&= \begin{cases}
				\mathcal{O}(|\log \delta_{i,j}|) & i=1\\
				\cO(1) & i\geq 2
			\end{cases}.
		\end{align*}
		For any $x\in U_{2r_0}(\xi)$, $-\Delta_g U^i_j  =e^{-\varphi_{j}} e^{U^i_j}$.  It follows that 
		\begin{align*}
			-\Delta_{g} \eta_{ij}&=
			2(\Delta_g\chi_j )\log\frac{|y_{\xi_j}|^{\alpha_i}}{\delta_{i,j}^{\alpha_i}+|y_{\xi_j}|^{\alpha_i}}+4\Big\la \nabla\chi_j,\nabla\log\frac{|y_{\xi_j}|^{\alpha_i}}{\delta_{i,j}^{\alpha_i}+|y_{\xi_j}|^{\alpha_i}}\Big\ra_g\\
			&\quad +\frac 1 {|\Sigma|_g} \Big( \frac 1 2 \alpha_i\varrho(\xi_j)- \int_{\Sigma} \chi_j e^{-\varphi_j} |y_{\xi_j}|^{\alpha_i-2} e^{U^i_j} dv_g \Big). 
		\end{align*}
		We observe that $\Delta_{g} \chi_j\equiv 0$ and $\nabla \chi_j\equiv 0$ in $U_{2r_0}(\xi_j)\setminus U_{r_0}(\xi_j)$. 
		For any $x\in U_{2r_0}(\xi_j)\setminus U_{r_0}(\xi_j)$, we have 
		$ -2\log\Big( 1+ \frac{\delta_{i,j}^{\alpha_i}}{|y_{\xi_j}(x)|^{\alpha_i}}\Big)
		= -2\delta_{i,j}^{\alpha_i}  |y_{\xi}(x)|^{-{\alpha_i}} +\mathcal{O}(\delta_{i,j}^{2\alpha_i}) $
		and 
		\[-2\nabla \log\Big( 1+ \frac{\delta_{i,j}^{\alpha_i}}{|y_{\xi_j}(x)|^{\alpha_i}}\Big)
		= -2\delta_{i,j}^{\alpha_i} \nabla |y_{\xi_j}(x)|^{-\alpha_i} +\mathcal{O}(\delta_{i,j}^{2\alpha_i}), \] 
		as $\delta_{i,j}\rightarrow 0$.
		Moreover, a straightforward calculation of the integral implies that 
		\begin{align*}
			\int_{\Sigma} \chi_j e^{-\varphi_j}|y_{\xi_j}|^{\alpha_i-2} e^{U^i_j} dv_g 
			&	=\int_{B_{2r_0}^{\xi}} 2\alpha_i^2 \chi\Big(\frac{|y|}{r_0}\Big) \frac{\delta_{i,j}^{\alpha_i}|y|^{\alpha_i-2}}{(\delta_{i,j}^{\alpha_i} +|y|^{\alpha_i})^2} dy \\
			&=  \int_{B_{r_0}^{\xi}} 2\alpha_i^2 \frac{\delta_{i,j}^{\alpha_i}|y|^{\alpha_i-2}}{(\delta_{i,j}^{\alpha_i} +|y|^{\alpha_i})^2} dy +\mathcal{ O}(\delta_{i,j}^{\alpha_i} )\\
			&=\frac {\alpha_i\varrho(\xi_j)}2+{\mathcal{O}}(\delta_{i,j}^{\alpha_i}),
		\end{align*}
		where we applied the fact that $\int _{|y|<r}2\alpha_i^2 \frac{\delta_{i,j}^{\alpha_i}|y|^{\alpha_i-2}}{(\delta_{i,j}^{\alpha_i} +|y|^{\alpha_i})^2}   dy = 4\pi \alpha_i\Big(1- \frac{\delta_{i,j}^{\alpha_i}}{\delta_{i,j}^{\alpha_i}+r^{\alpha_i}}\Big)$ for any $r\geq 0.$
		Hence,  as $\delta_{i,j}\rightarrow 0$,
		$ -\Delta_g\eta_{ij}= \mathcal{O}(\delta_{i,j}^{\alpha_i}). $
		By the regularity theory for elliptic equations (see \cite{Agmon1959,Wehrheim2004}, for instance) and Sobolev embedding, we derive that as $\delta_{i,j}\rightarrow 0$,
		$
		\eta_{ij}=
		\begin{cases}
			\mathcal{O}(\delta_{i,j}^2|\log \delta_{i,j}|) & i=1\\
			\cO(\delta_{i,j}^2) & i\geq 2
		\end{cases} 
		$
		in $C(\Sigma)$. 
	\end{proof} 
	Using the same approach of Lemma~\ref{lem:extension_PU_as}, we can deduce the asymptotic expansion of $PZ_{ij}$ as follows:
	\begin{lemma}\label{lem:extension_PZ_as}
		For any $i=1, \dots, N $, $j=1,\ldots,m$, as $\delta_{i,j}\rightarrow 0$
		\begin{equation}
			\label{eq:extension_PZ}
			PZ_{ij}= Z_{ij}+ 1 + \mathcal{O}(\delta_{i,j}^{\alpha_i}|\log \delta_{i,j}|)= \frac{2\delta_{i,j}^{\alpha_i}}{\delta_{i,j}^{\alpha_i}+|y_{\xi_j}|^{\alpha_i}}+ \begin{cases}
				\cO(\delta_{i,j}^2|\log \delta_{i,j}|),  & i=1\\
				\cO(\delta_{i,j}^2),  & i\geq  2
			\end{cases}.
		\end{equation}
	\end{lemma}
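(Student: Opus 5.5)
The plan is to argue exactly as in Lemma~\ref{lem:extension_PU_as}: subtract from $PZ_{ij}$ an explicit localized profile, and show that the remainder solves a Neumann problem whose data are $\mathcal{O}(\delta_{i,j}^{\alpha_i})$ and whose mean is small. The profile is fixed by the identity
\[
Z_{ij}+1=\frac{2\delta_{i,j}^{\alpha_i}}{\delta_{i,j}^{\alpha_i}+|y_{\xi_j}|^{\alpha_i}} \quad\text{in } U(\xi_j),
\]
which follows directly from $z_i(r)=\frac{1-r^{\alpha_i}}{1+r^{\alpha_i}}$. This identity also shows that the two right-hand sides in \eqref{eq:extension_PZ} agree where $\chi_j=1$. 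I would therefore set
\[
\eta_{ij}:=PZ_{ij}-\chi_j\,(Z_{ij}+1),
\]
and aim to prove $\eta_{ij}=\mathcal{O}(\delta_{i,j}^2|\log\delta_{i,j}|)$ for $i=1$ and $\mathcal{O}(\delta_{i,j}^2)$ for $i\ge2$, uniformly on $\Sigma$.

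\textbf{Step 1 (equation for $\eta_{ij}$).} The rescaling $y\mapsto y/\delta_{i,j}$ of $-\Delta z_i=2\alpha_i^2\frac{|y|^{\alpha_i-2}}{(1+|y|^{\alpha_i})^2}z_i$ gives
\[
-\Delta_y Z_{ij}=2\alpha_i^2\frac{\delta_{i,j}^{\alpha_i}|y|^{\alpha_i-2}}{(\delta_{i,j}^{\alpha_i}+|y|^{\alpha_i})^2}Z_{ij}.
\]
In isothermal coordinates $\Delta_g=e^{-\varphi_j}\Delta_y$, so $-\Delta_g Z_{ij}=e^{-\varphi_j}|y_{\xi_j}|^{\alpha_i-2}e^{U^i_j}Z_{ij}$ on $U_{2r_0}(\xi_j)$. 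Comparing with \eqref{eq:proj_Z}, we get
\[
-\Delta_g\eta_{ij}=2\langle\nabla\chi_j,\nabla Z_{ij}\rangle_g+(\Delta_g\chi_j)(Z_{ij}+1)-\overline{\chi_j e^{-\varphi_j}|y_{\xi_j}|^{\alpha_i-2}e^{U^i_j}Z_{ij}}.
\]
The first two terms live on the annulus $r_0\le|y_{\xi_j}|\le 2r_0$. There $Z_{ij}+1$ and its gradient are $\mathcal{O}(\delta_{i,j}^{\alpha_i})$.

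For the mean term, change variables as in \eqref{eq:L_p_exp_a}. It equals $\int_{\R^2}2\alpha_i^2\frac{|y|^{\alpha_i-2}}{(1+|y|^{\alpha_i})^2}z_i\,dy$ plus the tail over $|y|\ge r_0/\delta_{i,j}$. The full-space integral vanishes; this is the same identity used in Step 3 of Lemma~\ref{lem:invertible_as}. The tail is $\mathcal{O}(\delta_{i,j}^{\alpha_i})$. Hence $-\Delta_g\eta_{ij}=\mathcal{O}(\delta_{i,j}^{\alpha_i})$ in $L^\infty(\Sigma)$.

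Since $\xi_j$ lies in the interior and $U_{4r_0}(\xi_j)\cap\partial\Sigma=\emptyset$, we have $\partial_{\nu_g}\eta_{ij}=0$ on $\partial\Sigma$.

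\textbf{Step 2 (mean of $\eta_{ij}$).} Using $\int_\Sigma PZ_{ij}\,dv_g=0$,
\[
\int_\Sigma\eta_{ij}\,dv_g=-2\int_{B^{\xi_j}_{2r_0}}\chi\Big(\frac{|y|}{r_0}\Big)e^{\hat\varphi_{\xi_j}(y)}\frac{\delta_{i,j}^{\alpha_i}}{\delta_{i,j}^{\alpha_i}+|y|^{\alpha_i}}\,dy.
\]
After the substitution $y=\delta_{i,j}y'$ this becomes $\delta_{i,j}^2\int_{|y'|\lesssim 1/\delta_{i,j}}\frac{dy'}{1+|y'|^{\alpha_i}}$. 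This is $\mathcal{O}(\delta_{i,j}^2|\log\delta_{i,j}|)$ when $\alpha_1=2$ and $\mathcal{O}(\delta_{i,j}^2)$ when $\alpha_i>2$. It is the same computation as the mean of $\eta_{ij}$ in Lemma~\ref{lem:extension_PU_as}.

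\textbf{Step 3 (conclusion).} Apply $W^{2,q}$ elliptic estimates for the Neumann problem (\cite{Agmon1959,Wehrheim2004}) and Sobolev embedding to $\eta_{ij}-\overline{\eta_{ij}}$. This gives $\|\eta_{ij}-\overline{\eta_{ij}}\|_{C(\Sigma)}=\mathcal{O}(\delta_{i,j}^{\alpha_i})$. Adding the mean from Step 2 yields the claimed error, which is dominated by the mean because $\alpha_i\ge2$.

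On $\{\chi_j=1\}$ the estimate reads $PZ_{ij}=Z_{ij}+1+\text{error}$. Off this set, $\chi_j(Z_{ij}+1)$ and $\frac{2\delta_{i,j}^{\alpha_i}}{\delta_{i,j}^{\alpha_i}+|y_{\xi_j}|^{\alpha_i}}$ are themselves $\mathcal{O}(\delta_{i,j}^{\alpha_i})$. Both forms in \eqref{eq:extension_PZ} therefore follow, with the error in the first form read as the $\delta_{i,j}^2$-type bound above.

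\textbf{Main obstacle.} The only genuinely delicate point is the exact cancellation $\int_{\R^2}\frac{|y|^{\alpha_i-2}}{(1+|y|^{\alpha_i})^2}z_i\,dy=0$ in Step 1, which removes the otherwise $\mathcal{O}(1)$ constant forcing term. I would check it by the substitution $t=r^{\alpha_i}$, which reduces it to $\int_0^\infty\frac{1-t}{(1+t)^3}\,dt=0$. The logarithm in the mean for $i=1$ is what sets the final rate.
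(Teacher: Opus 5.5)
Your argument is correct and follows exactly the route the paper indicates; the paper gives no separate proof, only ``the same approach as Lemma~\ref{lem:extension_PU_as}''. You subtract $\chi_j(Z_{ij}+1)$, show the forcing is $\mathcal{O}(\delta_{i,j}^{\alpha_i})$ using the vanishing of $\int_{\R^2}\frac{|y|^{\alpha_i-2}(1-|y|^{\alpha_i})}{(1+|y|^{\alpha_i})^3}\,dy$, compute the mean, and conclude with Neumann $W^{2,q}$ estimates.

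Your reading of the first error term as the $\delta_{i,j}^2$-type bound is forced, not merely convenient. For $i\ge 2$ the mean $\overline{\eta_{ij}}=-2\delta_{i,j}^2\big(\int_{\R^2}\frac{dy}{1+|y|^{\alpha_i}}+o(1)\big)$ is of exact order $\delta_{i,j}^2$, and $Z_{ij}+1\equiv 1$ outside $U(\xi_j)$. So the printed $\mathcal{O}(\delta_{i,j}^{\alpha_i}|\log\delta_{i,j}|)$ cannot hold for $i\ge2$, whereas the weaker form you prove is all that Lemma~\ref{lem:invertible_as} actually uses.
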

	\begin{lemma}
		\label{lem:theta_small_as} 
		Let $\Theta_{ij}$ be defined by~\eqref{eq:def_theta_as}. For $i=1, \dots, N ,j=1,\ldots,m$, it holds 
		
		$
		|\Theta_{ij}(y)|=\mathcal{O}\left( \delta_{i,j}|y|+\varepsilon^{\frac 1 {2i}}\right), y\in\frac 1 {\delta_{i,j}}\cA_{ij},
		$
		and particularly,  $\sup_{\frac 1 {\delta_{i,j}}\cA_{ij}} |\Theta_{ij}(y)|=\mathcal{O}(1)$ as $\varepsilon\rightarrow 0.$
	\end{lemma}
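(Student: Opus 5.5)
The plan is to expand $\log\Theta_{ij}(y)$ term by term on $y\in\frac1{\delta_{i,j}}\cA_{ij}$, i.e. for $x=y_{\xi_j}^{-1}(\delta_{i,j}y)$ with $\sqrt{\delta_{i-1,j}\delta_{i,j}}\le|y_{\xi_j}(x)|\le\sqrt{\delta_{i,j}\delta_{i+1,j}}$. We then show that the constant terms cancel exactly because of the choice \eqref{eq:def_d_ij_as}, and that everything left over is $\cO(\delta_{i,j}|y|+\varepsilon^{1/(2i)})$.

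\textbf{Step 1: expand the projected bubbles with Lemma~\ref{lem:extension_PU_as}.}

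1. For the self term,
\[
PU^i_j-U^i_j=-\log(2\alpha_i^2\delta_{i,j}^{\alpha_i})+\frac{\alpha_i\varrho(\xi_j)}2H^g(x,\xi_j)+\cO(\delta_{i,j}^2|\log\delta_{i,j}|),
\]
since $\chi_j\equiv1$ on $\cA_{ij}$.

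2. For $i'\neq i$ we write
\[
PU^{i'}_j=-2\log(\delta_{i',j}^{\alpha_{i'}}+|y_{\xi_j}|^{\alpha_{i'}})+\frac{\alpha_{i'}\varrho(\xi_j)}2H^g(x,\xi_j)+o(1).
\]
On $\cA_{ij}$ the ratio $|y_{\xi_j}|/\delta_{i',j}$ is large when $i'<i$ and small when $i'>i$. Hence $-2\log(\delta_{i',j}^{\alpha_{i'}}+|y_{\xi_j}|^{\alpha_{i'}})$ equals $-2\alpha_{i'}\log|y_{\xi_j}|$ for $i'<i$, and $-2\alpha_{i'}\log\delta_{i',j}$ for $i'>i$. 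The relative errors are $\cO\big((\delta_{i',j}/|y_{\xi_j}|)^{\alpha_{i'}}\big)$ and $\cO\big((|y_{\xi_j}|/\delta_{i',j})^{\alpha_{i'}}\big)$ respectively.

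3. At the edges of the annulus these errors are at worst powers of $\delta_{i-1}/\delta_i$ or $\delta_i/\delta_{i+1}$. By \eqref{eq:diff_deltai}, and tracking exponents, they should come out as $\cO(\varepsilon^{1/(2i)})$ (this is where that particular power is expected to come from).

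4. For $j'\neq j$, $PU^{i'}_{j'}=\frac{\alpha_{i'}\varrho(\xi_{j'})}2G^g(x,\xi_{j'})+o(1)$. Since $G^g(\cdot,\xi_{j'})$ is smooth near $\xi_j$, replacing $x$ by $\xi_j$ costs $\cO(\delta_{i,j}|y|)$. The same holds for $H^g(x,\xi_j)-R^g(\xi_j)$, $\varphi_j$ (which vanishes at the origin) and $\log V_i(x)-\log V_i(\xi_j)$.

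\textbf{Step 2: cancel the logarithmic dependence on $|y|$.}

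1. Collecting the $\log|y_{\xi_j}|=\log\delta_{i,j}+\log|y|$ contributions gives
\[
-(\alpha_i-2)\log|\delta_{i,j}y|+\sum_{i'<i}\frac{a_{ii'}}2(-2\alpha_{i'})\log|\delta_{i,j}y|
=-\Big(\alpha_i-2+\sum_{i'<i}a_{ii'}\alpha_{i'}\Big)\log|\delta_{i,j}y|.
\]

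2. This coefficient vanishes by the identity $\alpha_i-2=-\sum_{i'<i}a_{ii'}\alpha_{i'}$ stated after \eqref{def:alpha_i}. This exactness is essential, since $\log|y|$ is unbounded on the annulus.

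\textbf{Step 3: cancel the constants.}

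1. The $\log\delta$ terms that remain are
\[
-\alpha_i\log\delta_{i,j}-\sum_{i'>i}a_{ii'}\alpha_{i'}\log\delta_{i',j}+\log\varepsilon .
\]

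2. Writing $\delta_{i,j}=d_{i,j}\delta_i$, the relation $\delta_i^{\alpha_i}\prod_{i'>i}\delta_{i'}^{a_{ii'}\alpha_{i'}}=\varepsilon$ from \eqref{def:delta_i} removes all powers of $\varepsilon$. What is left is $-\alpha_i\log d_{i,j}-\sum_{i'>i}a_{ii'}\alpha_{i'}\log d_{i',j}$.

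3. Combine this with $-2\log\alpha_i$, with $\log V_i(\xi_j)$, with the constant from $\log(a_{ii}\varepsilon)$ (which pairs with the $2\alpha_i^2$ normalization), and with the Robin and Green contributions
\[
\frac{\alpha_i\varrho(\xi_j)}2R^g+\sum_{i'\ne i}\frac{a_{ii'}}2\frac{\alpha_{i'}\varrho(\xi_j)}2R^g+\sum_{j'\neq j}\big(\cdots\big)G^g .
\]
All of these should cancel exactly against \eqref{eq:def_d_ij_as}.

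4. The main obstacle is the bookkeeping here. The factor $\frac12(\alpha_i+\sum\frac{a_{ii'}}2\alpha_{i'})$ in \eqref{eq:def_d_ij_as} must match the coefficients coming from $PU$, possibly after normalizing $\varrho=8\pi$ at interior points. One must also check that the $\log 2$ and $\log\alpha_i$ constants agree. If a constant mismatch appears, I would correct it by re-deriving the normalization of $\Theta_{ij}$ against $e^{U^i_j}$.

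\textbf{Step 4: conclude.}

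Summing the errors gives $|\Theta_{ij}|=\cO(\delta_{i,j}|y|+\varepsilon^{1/(2i)})$. Because $\delta_{i,j}|y|\le\sqrt{\delta_{i,j}\delta_{i+1,j}}\le r_0$ is bounded (and for $i=N$ the annulus is cut off at $r_0$), the supremum bound $\sup_{\frac1{\delta_{i,j}}\cA_{ij}}|\Theta_{ij}|=\cO(1)$ follows.
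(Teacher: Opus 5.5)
Your proposal is the paper's own proof, step for step. You expand $\Theta_{ij}$ with Lemma~\ref{lem:extension_PU_as} plus a first-order Taylor expansion of the smooth terms, and kill the $\log|\delta_{i,j}y|$ coefficient by $\alpha_i-2=-\sum_{i'<i}a_{ii'}\alpha_{i'}$. You then kill the constants via \eqref{def:delta_i} and \eqref{eq:def_d_ij_as}; your bookkeeping does close, using $\log 2-\log(2\alpha_i^2)=-2\log\alpha_i$. Finally you bound the neighbouring-bubble ratios on the annulus by powers of $\delta_{i-1,j}/\delta_{i,j}$ and $\delta_{i,j}/\delta_{i+1,j}$ via \eqref{eq:diff_deltai}.

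The one thing to tighten is your $o(1)$ for the remainders of $PU^{i'}_j$ and $PU^{i'}_{j'}$, which cannot be summed into a rate. Carry the explicit $\cO(\delta_{l,j'}^2|\log\delta_{l,j'}|)$ from Lemma~\ref{lem:extension_PU_as} instead. These remainders are genuinely of order $\delta_{l,j'}^2$, since their mean is $2\int_\Sigma\chi_{j'}\log(1+\delta_{l,j'}^{\alpha_l}|y_{\xi_{j'}}|^{-\alpha_l})\,dv_g>0$. So the neighbouring term $\delta_{i+1,j}^2$ need not be $\cO(\varepsilon^{1/(2i)})$: for $\mathbf{G}_2$ with $i=1$ it is of order $\varepsilon^{1/4}$. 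The paper's proof passes over this as well, and it only adds a harmless extra positive power of $\varepsilon$ to the bound.
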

	\begin{proof}
		Using Lemma~\ref{lem:extension_PU_as}  and the  Taylor expansion $f(\delta_{i,j}y)=1 +\cO(\delta_{i,j}|y|)$ for any $f\in C^1(\Omega_{ij})$, we deduce  that 
		\begin{align}\label{eq:A3-1}
			\Theta_{ij}(y)&= -\log(2\alpha_i^2 \delta_{i,j}^{\alpha_i}) +\frac{\alpha_i}{2}\Big( \varrho(\xi_j)R^g(\xi_j)+\sum_{\substack{j'=1\\ j'\neq j}}^m\varrho(\xi_{j'}) G^g(\xi_{j},\xi_{j'})\Big)\\
			&\quad+ \sum_{i'<i } \frac{a_{ii'}}2
			\Big( -2\alpha_{i'}\log(\delta_{i,j}|y|) + \frac{\alpha_{i'}}{2}
			\Big(\varrho(\xi_j)R^g(\xi_j)+\sum_{\substack{j'=1\\ j'\neq j}}^m\varrho(\xi_{j'}) G^g(\xi_{j},\xi_{j'})\Big)\nonumber\\
			&\quad+ \mathcal{O}\Big(\frac {\delta_{i',j}^{\alpha_{i'}}} {\delta_{i,j}^{\alpha_{i'}}} |y|^{-\alpha_{i'}}\Big) \Big)+\sum_{i'>i } \frac{a_{ii'}}2\Big( -2\alpha_{i'}\log\delta_{i',j}+ \frac{\alpha_{i'}}{2}\Big(\varrho(\xi_j)R^g(\xi_j)\nonumber\\
			&\quad +\sum_{\substack{j'=1\\ j'\neq j}}^m \varrho(\xi_{j'})G^g(\xi_{j'},\xi_j)\Big)+  \mathcal{O}\Big(\frac {\delta_{i,j}^{\alpha_{i'}}} {\delta_{i',j}^{\alpha_{i'}}}  |y|^{\alpha_{i'}} \Big) \Big) + \log V_i(\xi_j)+\log(a_{ii}\varepsilon)\nonumber\\
			&\quad -(\alpha_i-2)\log(\delta_{i,j} |y|) +\mathcal{O}\Big(
			\sum_{l=1}^N\sum_{j'=1}^m \varepsilon_{l,j'}+\delta_{i,j}|y|\Big)\nonumber\\
			&=-2\log\alpha_i -( \alpha_i \log d_{i,j} +\sum_{i'>i} a_{ii'} \alpha_{i'}\log (d_{i',j}) ) +\frac 1  2\Big(\alpha_i +\sum_{\sumi}^N \frac{a_{ii'}}{2} \alpha_{i'}\Big) \nonumber\\
			&\quad\times \Big(\varrho(\xi_j)R^g(\xi_j)+\sum_{\sumj}^m\varrho(\xi_{j'}) G^g(\xi_{j'},\xi_j)\Big)+ \log V(\xi_j)	-\Big(\alpha_i \log \delta_i \nonumber\\
			&\quad
			+\sum_{i'>i} a_{ii'} \alpha_{i'}\log \delta_{i'} -\log\varepsilon\Big)+\Big(- (\alpha_i-2)-\sum_{i'<i} a_{ii'} \alpha_i'\Big)\log( \delta_{i,j}|y|)\nonumber\\
			&\quad+ \sum_{i'<i}\mathcal{O}\Big(\frac {\delta_{i',j}^{\alpha_{i'}}} {\delta_{i,j}^{\alpha_{i'}}} |y|^{-\alpha_{i'}}\Big)+\sum_{i'>i} \cO \Big(\frac {\delta_{i,j}^{\alpha_{i'}}} {\delta_{i',j}^{\alpha_{i'}}} |y|^{\alpha_{i'}} \Big)+ \mathcal{O}\Big(\sum_{l=1}^N \sum_{j'=1}^m \varepsilon_{l,j'}+\delta_{i,j}|y|\Big),\nonumber
		\end{align}
		where $\varepsilon_{l,j'}$ denotes $\cO(\delta_{l,j'}^2 |\log \delta_{l,j'}|)$ for $l=1$ and  $\cO(\delta_{l,j'}^2)$ for $l\geq 2.$
		Recall $\alpha_i,\delta_{i,j}$ and $d_{i,j}$ are defined by~\eqref{def:alpha_i},~\eqref{def:delta_i} and~\eqref{eq:def_d_ij_as}, respectively. 
		It follows immediately that the first three terms in \eqref{eq:A3-1} vanish. 
		As a result, we  have that
		\begin{equation}
			\label{eq:theta_es1} \Theta_{ij}(y)=\sum_{i'<i}\mathcal{O}\Big(\frac {\delta_{i',j}^{\alpha_{i'}}} {\delta_{i,j}^{\alpha_{i'}}} |y|^{-\alpha_{i'}}\Big)+\sum_{i'>i} \cO \Big(\frac {\delta_{i,j}^{\alpha_{i'}}} {\delta_{i',j}^{\alpha_{i'}}} |y|^{\alpha_{i'}} \Big)+ \mathcal{O}\Big(\sum_{l=1}^N \sum_{j'=1}^m \varepsilon_{l,j'}+\delta_{i,j}|y|\Big).
		\end{equation} 
		The estimates	\eqref{def:alpha_i} and \eqref{def:delta_i} imply that 
		$ \mathcal{O}(\delta_{i,j}^{\alpha_i}|\log \delta_{i,j}|)=\mathcal{O}(\varepsilon\log |\varepsilon|), $
		for $i=1, \dots, N $ and $j=1,\ldots,m.$
		For $y\in \frac 1 {\delta_{i,j}}\cA_{ij}$, we have 
		$$ \sqrt{\delta_{i-1,j}/\delta_{i,j}}\leq |y|< \sqrt{\delta_{i+1,j}/\delta_{i,j}}.$$
		If $i'<i$, 	\begin{equation*}
			\begin{split}
				&\mathcal{O}\Big(\frac {\delta_{i',j}^{\alpha_{i'}}} {\delta_{i,j}^{\alpha_{i'}}|y|^{\alpha_{i'}}}\Big)=\mathcal{O}\Big( \Big( \frac{\delta_{i',j}^2}{\delta_{i,j}\delta_{i-1,j}}\Big)^{\alpha_{i'}/2}\Big)= \mathcal{O}\Big( \left( \frac{\delta_{i-1,j}}{\delta_{i,j}}\right)^{\alpha_{i'}/2}\Big)\stackrel{\eqref{eq:diff_deltai}}{=}\cO(\varepsilon^{\frac 1 {2i}});
			\end{split}
		\end{equation*}
		if $i'>i$, 
		\begin{equation*}
			\begin{split}
				&	\sum_{i'>i} \mathcal{O} \Big(\frac {\delta_{i,j}^{\alpha_{i'}}|y|^{\alpha_{i'}}} {\delta_{i',j}^{\alpha_{i'}}} \Big)=\mathcal{O}\Big( \Big( \frac{\delta_{i,j}\delta_{i+1,j}} {\delta_{i',j}^2}\Big)^{\alpha_{i'}/2}\Big)= \mathcal{O}\Big( \Big( \frac{\delta_{i,j}}{\delta_{i+1,j}}\Big)^{\alpha_{i'}/2}\Big)\stackrel{\eqref{eq:diff_deltai}}{=}\cO(\varepsilon^{\frac 1 {2i}}).
			\end{split}
		\end{equation*}
		Moreover, $\mathcal{O}(\delta_{i,j} |y|)=\mathcal{O}(1).$ Lemma~\ref{lem:theta_small_as} is complete.
	\end{proof}
	\begin{lemma}
		\label{lem:diif_e^W_sum_e^u_as} 
		For  $i=1, \dots, N ,$ there exists $p_0>1$ such that for any $p\in (1,p_0)$
		$$\Big\|2 \varepsilon V_ie^{W_{i,\varepsilon}}-\sum_{j=1}^m\chi_j e^{-\varphi_j}|y_{\xi_j}|^{\alpha_i-2} e^{U^i_j} \Big\|_{p}=\mathcal{O}\left(\varepsilon^{\frac{2-p}{4N}}\right),$$
		as $\varepsilon \rightarrow 0.$
	\end{lemma}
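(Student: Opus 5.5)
We will prove the estimate by splitting $\Sigma$ into the annuli $\cA_{ij}$ around each $\xi_j$ and the region away from the blow-up points, and estimating separately in each piece. Throughout we use the pointwise identity obtained by unwinding the definitions of $W_{i,\varepsilon}$ and of $\Theta_{ij}$ in \eqref{eq:def_theta_as}. On $U_{r_0}(\xi_j)$ we have $\chi_j\equiv 1$, so in the scaled variable $x=y_{\xi_j}^{-1}(\delta_{i,j}y)$ we can write
\[
2\varepsilon V_i e^{W_{i,\varepsilon}}(x)=\chi_je^{-\varphi_j}|y_{\xi_j}|^{\alpha_i-2}e^{U^i_j}(x)\,e^{\Theta_{ij}(y)}.
\]
Here $a_{ii}=2$, and $e^{\varphi_j}$ appears inside $\Theta_{ij}$ and compensates the factor $e^{-\varphi_j}$. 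Hence, on $\cA_{ij}$, the difference we must estimate is
\[
\chi_je^{-\varphi_j}|y_{\xi_j}|^{\alpha_i-2}e^{U^i_j}\bigl(e^{\Theta_{ij}}-1\bigr),
\]
up to the contributions of the other bubbles $U^i_{j'}$ with $j'\neq j$. Those contributions are $\cO(\delta_{i,j'}^{\alpha_i})$ uniformly away from $\xi_{j'}$ and are harmless.

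\textbf{Step 1: the annulus $\cA_{ij}$.} Using $|e^s-1|\le e^{|s|}|s|$ and Lemma~\ref{lem:theta_small_as}, we get $|e^{\Theta_{ij}}-1|\le C(\delta_{i,j}|y|+\varepsilon^{\frac1{2i}})$ on $\frac1{\delta_{i,j}}\cA_{ij}$. Changing variables, as in \eqref{eq:L_p_exp_a}, gives
\[
\int_{\cA_{ij}}\Big|\,\cdots\Big|^p\le C\delta_{i,j}^{2-2p}\int_{\R^2}\Big(\frac{|y|^{\alpha_i-2}}{(1+|y|^{\alpha_i})^2}\Big)^p\bigl(\delta_{i,j}|y|+\varepsilon^{\frac1{2i}}\bigr)^p\,dy .
\]
The $\delta_{i,j}|y|$ piece integrates to $\cO(\delta_{i,j}^{2-p})$ when $p$ is close to $1$, since $|y|^{p(\alpha_i-1)-2p\alpha_i}$ is integrable at infinity. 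The $\varepsilon^{\frac{1}{2i}}$ piece gives $\cO(\delta_{i,j}^{2-2p}\varepsilon^{\frac{p}{2i}})$. Since $\delta_{i,j}\ge c\,\varepsilon^{\gamma}$ for an explicit exponent $\gamma$ read off from \eqref{def:delta_i}, the factor $\delta_{i,j}^{2-2p}$ costs only $\varepsilon^{-C(p-1)}$. After taking $p$-th roots, both pieces are therefore bounded by a positive power of $\varepsilon$ of the form $\varepsilon^{c(2-p)}$ for $p$ near $1$. Tracking the worst index $i=N$ (using $\varepsilon^{1/(2N)}$ and $\delta_N=\varepsilon^{1/\alpha_N}$) should produce the stated rate $\varepsilon^{\frac{2-p}{4N}}$.

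\textbf{Step 2: the inner and outer parts for other indices.} Fix a component $i$. On $\cA_{i'j}$ with $i'\neq i$ we estimate the two terms separately rather than their difference. For $\sum_j\chi_je^{-\varphi_j}|y_{\xi_j}|^{\alpha_i-2}e^{U^i_j}$ this is exactly \eqref{eq:lem3.1-1}, which yields $\cO(\delta_{i+1}^{1-p}\delta_i^{1-p}(\delta_i/\delta_{i+1})^{\alpha_ip/2})$ or the symmetric bound, and these are a power of $\varepsilon$ by \eqref{eq:diff_deltai}. For $2\varepsilon V_ie^{W_{i,\varepsilon}}$ we use Lemma~\ref{lem:extension_PU_as} to write $W_{i,\varepsilon}$ explicitly. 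On $\cA_{i'j}$ the bubble $PU^i_j$ either equals $-2\alpha_i\log|y_{\xi_j}|+\cO(1)$ (when $i'>i$, i.e.\ outside its core) or equals $-2\alpha_i\log\delta_{i,j}+\cO(1)$ (when $i'<i$). The coupling terms $\frac{a_{ii'}}2PU^{i'}_j$ behave in the same way. Using the relation $\delta_i^{\alpha_i}\prod_{i'>i}\delta_{i'}^{a_{ii'}\alpha_{i'}}=\varepsilon$ and $\alpha_i-2=-\sum_{i'<i}a_{ii'}\alpha_{i'}$, the term $\varepsilon e^{W_{i,\varepsilon}}$ is dominated on $\cA_{i'j}$ by the same profile $\delta_{i}^{\alpha_i}|y|^{\alpha_i-2}/(\delta_i^{\alpha_i}+|y|^{\alpha_i})^2$, up to bounded factors. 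Hence it satisfies the same $L^p$ bound. Away from $\bigcup_jU_{r_0}(\xi_j)$, $W_{i,\varepsilon}$ is bounded by Lemma~\ref{lem:extension_PU_as}, so $\varepsilon e^{W_{i,\varepsilon}}=\cO(\varepsilon)$ there, while the bubble term is $\cO(\delta_{i,j}^{\alpha_i})$.

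\textbf{Main obstacle.} The delicate point is Step 2 for the non-symmetric matrices $\mathbf B_N,\mathbf C_N,\mathbf G_2$. There we must verify that on each annulus the exponent of $\varepsilon e^{W_{i,\varepsilon}}$ really matches the decaying tail of the $i$-th bubble; in particular the coefficient of $\log|y|$ must be exactly $\alpha_i-2$ inside $\cA_{i'j}$, so that no growing factor $(\delta_{i'}/\delta_i)^{\pm}$ survives. We would check this first for the last component $i=N$, where $a_{NN-1}$ differs, by a direct computation of the exponents. We would then collect all the $\varepsilon$-powers and confirm that the smallest one is at least $\frac{2-p}{4N}$. This exponent bookkeeping is where the stated rate is decided.
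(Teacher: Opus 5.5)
Your overall structure matches the paper's. On $U_{r_0}(\xi_j)$ you factor $2\varepsilon V_ie^{W_{i,\varepsilon}}$ as the $i$-th profile times $e^{\Theta_{ij}}$, and you use Lemma~\ref{lem:theta_small_as} on the own annulus $\cA_{ij}$; your Step~1 is fine. On the other annuli you estimate the two terms separately, and away from the $\xi_j$ you use boundedness of $W_{i,\varepsilon}$.

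The gap is the key claim of Step~2: that on $\cA_{i'j}$, $i'\neq i$, the term $\varepsilon e^{W_{i,\varepsilon}}$ is dominated up to bounded factors by $\delta_i^{\alpha_i}|y|^{\alpha_i-2}(\delta_i^{\alpha_i}+|y|^{\alpha_i})^{-2}$. This is false. The check you announce in your last paragraph (coefficient of $\log|y|$ equal to $\alpha_i-2$ on $\cA_{i'j}$) cannot succeed.

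Suppress $j$. Lemma~\ref{lem:extension_PU_as} gives $PU^l_j=-2\alpha_l\log\max(\delta_l,|y|)+\cO(1)$. Combined with $\alpha_i-2=-\sum_{l<i}a_{il}\alpha_l$ and $\delta_i^{\alpha_i}\prod_{l>i}\delta_l^{a_{il}\alpha_l}=\varepsilon$, the ratio of $2\varepsilon V_ie^{W_{i,\varepsilon}}$ to that profile is, up to bounded factors (with the obvious convention for $i=1,N$),
\[
\Big(\frac{\max(\delta_{i-1},|y|)}{|y|}\Big)^{\alpha_i-2}\Big(\frac{\max(\delta_{i+1},|y|)}{\delta_{i+1}}\Big)^{-a_{i,i+1}\alpha_{i+1}} .
\]
This is bounded only for $\delta_{i-1}\lesssim|y|\lesssim\delta_{i+1}$. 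The exponent matching you invoke holds on $\cA_{ij}$ only. Below $\delta_{i-1}$ and above $\delta_{i+1}$, a neighbouring bubble sits on the other branch of the $\max$.

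The weaker claim ``same $L^p$ bound'' also fails. Take $\mathbf A_3$, $i=1$, where $\delta_1=\varepsilon^{3/2}$, $\delta_2=\varepsilon^{1/2}$, $\delta_3=\varepsilon^{1/6}$. On $\cA_{3j}=\{|y|\ge\varepsilon^{1/3}\}$ (within the chart) one has $\varepsilon e^{W_{1,\varepsilon}}\asymp\varepsilon$, with mass $\sim\varepsilon$. The first profile has mass only $\sim\delta_1^2\varepsilon^{-2/3}=\varepsilon^{7/3}$ there.

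The lemma is still true, but you must bound $\varepsilon e^{W_{i,\varepsilon}}$ on the off-diagonal annuli directly. This is what the paper's quantities $\cI_j^{(i,i')}$ do, treating the cases $i=1$, $i=N$ and $2\le i\le N-1$.

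For $|y|\le\delta_{i-1}$, the expression freezes at $\varepsilon e^{W_{i,\varepsilon}}\asymp\delta_{i-1}^{\alpha_i-2}\delta_i^{-\alpha_i}$. Its $p$-th power therefore integrates there to $\lesssim(\delta_{i-1}/\delta_i)^{\alpha_ip}\delta_{i-1}^{2-2p}$, a positive power of $\varepsilon$ for $p$ near $1$ by \eqref{eq:diff_deltai}.

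For $|y|\ge\delta_{i+1}$, one has $\varepsilon e^{W_{i,\varepsilon}}\asymp\varepsilon|y|^{-a_{i,i+1}\alpha_{i+1}-\alpha_i-2}$. What must be checked is that this exponent is nonnegative. It is $0$ for $i\le N-2$ and for $\mathbf A_N$. At $i=N-1$ it equals $2N$ for $\mathbf B_N$, $2N-2$ for $\mathbf C_N$, and $4$ for $\mathbf G_2$. Hence $\varepsilon e^{W_{i,\varepsilon}}=\cO(\varepsilon)$ there.

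This sign condition, not a matching of $\log|y|$ coefficients, is where the non-symmetric Cartan matrices actually enter. Combined with your Step~1 and your bubble-term estimate, it yields the lemma.
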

	\begin{proof}
		Applying Lemma~\ref{lem:extension_PU_as}, we have for any $x\in\Sigma\setminus \bigcup_{j=1}^m U_{r_0}(\xi_j)$,
		\begin{align*}
			W_{i,\varepsilon}(x)
			&= \sum_{j=1}^m PU^i_j +\sum_{\substack{i'=1 \\ i' \neq i}}^N \frac{a_{ii'}}2\sum_{j=1}^m PU^{i'}_j\\
			&=\sum_{j=1}^m  \frac{\varrho(\xi_j)}  2\Big( \alpha_i + \sum_{\substack{i'=1 \\ i' \neq i}}^N  \frac{a_{ii'}}2 \alpha_{i'} \Big)G^g(x,\xi_j)  +\mathcal{O}\Big(\sum_{i=1}^N \delta_{i,j}^{2}|\log\delta_{i,j}| \Big)\\
			&=\mathcal{O}(1).
		\end{align*}
		By straightforward calculation, we deduce that 
		\begin{align*}
			&\quad\int_{\Sigma} \Big|  2 \varepsilon V_ie^{W_{i,\varepsilon}}-\sum_{j=1}^m\chi_j e^{-\varphi_j}|y_{\xi_j}|^{\alpha_i-2} e^{U^i_j}\Big|^p  dv_g\\
			&=\sum_{j=1}^m \int_{U_{r_0}(\xi_j)} \Big| 2 \varepsilon V_ie^{W_{i,\varepsilon}}- e^{-\varphi_j}|y_{\xi_j}|^{\alpha_i-2} e^{U^i_j}  \Big|^p  d v_g + \mathcal{O}\left(\varepsilon^p+\delta_{i,j}^{\alpha_ip}\right)\\
			&=\sum_{j=1}^m \int_{\frac 1 {\delta_{i,j}}B^{\xi_j}_{r_0}(\xi_j)} \Big| 2\varepsilon V_i\circ y_{\xi_j}^{-1}(\delta_{i,j} y) e^{\varphi_{\xi_j}\circ y_{\xi_j}^{-1}(\delta_{i,j} y)}  e^{W_{i,\varepsilon}\circ y_{\xi_j}^{-1}(\delta_{i,j} y)}\\
			&\quad - |\delta_{i,j}y|^{\alpha_i-2} e^{U^i_j\circ y_{\xi_j}^{-1}(\delta_{i,j} y)}  \Big|^p  d v_g+ \mathcal{O}\left(\varepsilon^p+\delta_{i,j}^{\alpha_ip}\right) \\
			&=(2\alpha_i^2)^p\sum_{j=1}^m \int_{\Omega_{ij}} \frac{|\delta_{i,j}|^{2-2p}|y|^{(\alpha_i-2)p}}{(1+|y|^{\alpha_i})^{2p}}\left|e^{\Theta_{ij}(y)}-1 \right|^p  dy + \mathcal{O}\left(\varepsilon^p+\delta_{i,j}^{\alpha_ip}\right) . 
		\end{align*}
		We will compute the  integral 
		$\int_{\Omega_{ij}} \frac{|\delta_{i,j}|^{2-2p}|y|^{(\alpha_i-2)p}}{(1+|y|^{\alpha_i})^{2p}}\left|e^{\Theta_{ij}(y)}-1 \right|^p  dy $
		by dividing it into several regions: 
		$\Omega_{ij}\cap \frac 1 {\delta_{i,j}}\cA_{1j}, \dots,  \Omega_{ij}\cap \frac 1 {\delta_{i,j}}\cA_{Nj}$. 
		Lemma~\ref{lem:theta_small_as} yields that 
		\begin{align*}
			&\quad \int_{\Omega_{ij}\cap \frac 1 {\delta_{i,j}} \cA_{ij}} \frac{|\delta_{i,j}|^{2-2p}|y|^{(\alpha_i-2)p}}{(1+|y|^{\alpha_i})^{2p}}\left|e^{\Theta_{ij}(y)}-1 \right|^p  dy \\
			&= \mathcal{O}\Big(\int_{\Omega_{ij}\cap \frac 1 {\delta_{i,j}} \cA_{ij}} \frac{|\delta_{i,j}|^{2-2p}|y|^{(\alpha_i-2)p}}{(1+|y|^{\alpha_i})^{2p}}\left|\Theta_{ij}(y) \right|^p  dy \Big)\\
			&	= \mathcal{O}\Big(\int_{\Omega_{ij}\cap \frac 1 {\delta_{i,j}} \cA_{ij}} |\delta_{i,j}|^{2-2p}\frac{|y|^{(\alpha_i-2)p}}{(1+|y|^{\alpha_i})^{2p}}\left|\delta_{i,j}|y|+\varepsilon^{\frac 1{2i} } \right|^p  dy \Big)\\
			&=\mathcal{O}\Big( \delta_{i,j}^{2-p}+\delta_{i,j}^{2-2p}
			\varepsilon^{\frac 1 {2pi}} \Big)
			\stackrel{\eqref{def:delta_i}}{=}\mathcal{O}(\varepsilon^{(2-p)\frac{1}{4i}} +\varepsilon^{\frac 1 {2ip}- \frac{(p-1)N}{2}})=\mathcal{O}(\varepsilon^{\frac{2-p}{4i}}),
		\end{align*}
		for $p>1$ sufficiently close to $1$.
		For $i'\neq i,$ by \eqref{def:delta_i}
		\begin{align}\label{eq:2}
			&\quad\int_{\Omega_{ij}\cap \frac 1 {\delta_{i,j}} \cA_{i'j}} \frac{|\delta_{i,j}|^{2-2p}|y|^{(\alpha_i-2)p}}{(1+|y|^{\alpha_i})^{2p}}\left|e^{\Theta_{ij}(y)}-1 \right|^p  dy \\
			&\leq 
			2^p\int_{\Omega_{ij}\cap \frac 1 {\delta_{i,j}} \cA_{i'j}} \frac{|\delta_{i,j}|^{2-2p}|y|^{(\alpha_i-2)p}}{(1+|y|^{\alpha_i})^{2p}}  dy\nonumber
			\\
			&\quad	+2^p \int_{\Omega_{ij}\cap \frac 1 {\delta_{i,j}} \cA_{i'j}} \frac{|\delta_{i,j}|^{2-2p}|y|^{(\alpha_i-2)p}}{(1+|y|^{\alpha_i})^{2p}}\left|e^{\Theta_{ij}(y)}\right|^p  dy.\nonumber
		\end{align}
		For the first term on the $R.H.S.$ of~\eqref{eq:2}, applying~\eqref{def:delta_i} and~\eqref{eq:diff_deltai}, we obtain
		\begin{equation*}
			\begin{split}
				&\int_{\Omega_{ij}\cap \frac 1 {\delta_{i,j}} \cA_{i'j}} \frac{|\delta_{i,j}|^{2-2p}|y|^{(\alpha_i-2)p}}{(1+|y|^{\alpha_i})^{2p}}  dy\\
				&=\mathcal{O}\Big( \int_{\frac{\sqrt{\delta_{i'-1,j}\delta_{i',j}}}{\delta_{ij}}\leq  |y|\leq \frac{\sqrt{\delta_{i',j}\delta_{i'+1,j}}}{\delta_{ij}}} \frac{|\delta_{i,j}|^{2-2p}|y|^{(\alpha_i-2)p}}{(1+|y|^{\alpha_i})^{2p}}  dy\Big)\\
				&=\left\{\begin{array}{c}
					O\Big(\delta_{i,j}^{2-2 p}\left(\frac{\sqrt{\delta_{i',j} \delta_{i'+1,j}}}{\delta_{i,j}}\right)^{\left(\alpha_{i}-2\right) p+2}\Big)=O\Big(\delta_{i,j}^{2-2 p}\left(\frac{\delta_{i',j}}{\delta_{i'+1,j}}\right)^{\frac{\left(\alpha_i-2\right) p+2}{2}}\Big) 
					\text { if } i>i' \\
					O\Big(\delta_{i,j}^{2-2 p}\left(\frac{\delta_{i,j}}{\sqrt{\delta_{i'-1,j} \delta_{i',j}}}\right)^{\left(\alpha_j+2\right) p-2}\Big)=O\Big(\delta_{i,j}^{2-2 p}\Big(\frac{\delta_{i'-1,j}}{\delta_{i',j}}\Big)^{\frac{(\alpha_i+2) p-2}{2}}\Big) 
					\text { if } i<i' 
				\end{array}\right.\\
				&= \left\{\begin{array}{c}
					O\left(
					\varepsilon^{-N(p-1) + \frac 1 {2N} }\right) 
					\text { if } i>i' \\
					O\left(\varepsilon^{-N(p-1) +\frac{2p-1}{2N}}\right) 
					\text { if } i<i' 
				\end{array}\right.=\mathcal{O}(\varepsilon^{\frac{2-p}{4N}}),
			\end{split}
		\end{equation*}
		for $p>1$ sufficiently close to $1$. 
		To estimate the second term of~\eqref{eq:2}, by  Lemma \ref{lem:extension_PU_as} we derive that 
		\begin{align*}
			&\quad	\int_{\Omega_{ij}\cap \frac 1 {\delta_{i,j}} \cA_{i'j}} \frac{|\delta_{i,j}|^{2-2p}|y|^{(\alpha_i-2)p}}{(1+|y|^{\alpha_i})^{2p}}\left|e^{\Theta_{ij}(y)}\right|^p  dy\\
			&=  \int_{\delta_{i,j}\Omega_{ij}\cap  \cA_{i'j}} |2 \varepsilon V_ie^{W_{i,\varepsilon}}( y_{\xi_j}^{-1} (y)) |^p dy=\mathcal{O}\left( \cI_j^{(i,i')}\right),
		\end{align*}
		where $\cI_j^{(i,i')}:= \varepsilon^p  \int_{\delta_{i,j}\Omega_{ij} \cap  \cA_{i'j}}\Big(\frac{\Pi_{l\neq i} (\delta_{l,j}^{\alpha_{l}}+ |y|^{\alpha_{l}})^{-a_{il}} }{(\delta_{i,j}^{\alpha_i}+|y|^{\alpha_i})^2}\Big)^p dy.$ 
		
		We compute $\cI_j^{(i,i')}$ by considering three distinct cases for the index $i$:
		\begin{itemize}
			\item 	[(1)] $i = 1$;
			\item [(2)] $i = N$;
			\item [(3)] $2 \leq i \leq N - 1$.
		\end{itemize}

		For $i=1,$ we have $i'\geq 2$ and $a_{il}=0$ for all $l>2$. It follows that 
		\begin{align*}
			\cI_j^{(i,i')}
			&= \varepsilon^p \int_{\delta_{1,j} \Omega_{1j}\cap\cA_{i'j}} \Big(\frac{(\delta_{2,j}^{\alpha_2}+ |y|^{\alpha_2})^{-a_{12}}}{(\delta_{1,j}^{\alpha_1}+ |y|^{\alpha_1})^2 }\Big)^p dy\\
			&=\cO \Big( \varepsilon^p
			\delta_{1,j}^{2-2\alpha_1 p} \delta_{2,j}^{(-a_{12})\alpha_2p} \int_{\Big\{|y|>\frac{\sqrt{\delta_{i',j}\delta_{i'-1,j}}}{\delta_{1,j}}\Big\}}  \frac{(1+\delta_{1,j}^{\alpha_2}\delta_{2,j}^{-\alpha_2} |y|^{\alpha_2})^{-a_{12} p}}{(1+ |y|^{\alpha_1})^{2p} } dy  \Big)\\
			&=\cO\Big( \varepsilon^p (\delta_1\delta_2) \frac{\delta_2^{(-a_{12})\alpha_2p}}{(\delta_1\delta_2)^{\alpha_1 p}} + \varepsilon^p (\delta_1\delta_2)^{ 1-\alpha_1 p + \frac{(-a_{12})\alpha_2 p}{2}}\Big)\\
			&=\cO( \varepsilon^{(\frac{N}{2}( \frac 1 p-1)+\frac  12 )p})=\cO(\varepsilon^{\frac p 4}),
		\end{align*}
		for $p>1$ sufficiently small, 
		where we applied \eqref{def:alpha_i} and \eqref{def:delta_i}. 
		
		For $i=N$, we have $i'\leq N-1$ and $a_{il}=0$ for all $l<N-1$. It follows that 
		\begin{equation*}
			\begin{split}
				\cI_j^{(i,i')}&= \varepsilon^p \int_{\delta_{N,j} \Omega_{Nj}\cap\cA_{i'j}} \Big(\frac{(\delta_{N-1,j}^{\alpha_{N-1}}+ |y|^{\alpha_{N-1}})^{-a_{NN-1}}}{(\delta_{N,j}^{\alpha_N}+ |y|^{\alpha_N})^2 }\Big)^p dy\\
				&=\cO \Big( \varepsilon^p
				\delta_{N,j}^{2-2\alpha_N p}  \int_{\Big\{|y|<\frac{\sqrt{\delta_{i',j}\delta_{i'+1,j}}}{\delta_{N,j}}\Big\}}  \Big(\frac{\delta_{N-1,j}^
					{(-a_{NN-1})\alpha_{N-1}}} {(1+ |y|^{\alpha_N})^2 } \Big)^p dy\\
				&\quad+\varepsilon^p 	\delta_{N,j}^{2-2\alpha_N p-a_{NN-1}\alpha_{N-1}p}
				\int_{\Big\{|y|<\frac{\sqrt{\delta_{i',j}\delta_{i'+1,j}}}{\delta_{N,j}}\Big\}}   |y|^{(-a_{NN-1})\alpha_{N-1}p} dy  \Big)\\
				&=\cO\Big( \varepsilon^p (\delta_N\delta_{N-1}) \frac{\delta_{N-1}^{(-a_{NN-1})\alpha_{N-1}p}}{\delta_N^{2\alpha_N P}} +    \Big(\frac{\delta_{N-1}}{\delta_N}\Big)^{ (1-\frac 1 2 (-a_{NN-1}) \alpha_{N-1})p}\Big)\\
				&=\cO(\varepsilon^p+ \varepsilon^{\frac 1 {2(N-1)}\frac {\alpha_N} 2 p} )=\cO(\varepsilon^{\frac p 2}),
			\end{split}
		\end{equation*}
		where we applied \eqref{def:alpha_i} and \eqref{def:delta_i}. 
		
		For the case $2\leq i\leq N-1$, we have $N\geq 3$ and $|a_{il}|=0$ for all $|i-l|\geq 2.$ 	Since $\delta_{l,j}$ is monotonically increasing with respect to the index  $i$ for $\varepsilon$ sufficiently small, 
		$\cI_j^{(i, i')}$ can be reduced as follows: for $i< i'$ 
		\begin{equation*}
			\begin{split}
				\cI^{(i, i')}_j&= 
				\cO \Big( \varepsilon^p 
				\Pi_{\substack{ l\geq i'+1\\ l\neq i}  } \delta_{l,j}^{p(-a_{il} \alpha_l)}  \int_{\delta_{i,j}
					\Omega_{ij} \cap  \cA_{i'j}} \Pi_{\substack{ l'\leq i'-1\\ l'\neq i} } |y|^{  p(-a_{il'} {\alpha_{i'}} )} 
				\\
				&\quad \times 
				\Big(\frac{ (\delta_{i',j}^{\alpha_{i'}}+ |y|^{\alpha_{i'}})^{-a_{ii'}} } {(\delta_{i,j}^{\alpha_i}+|y|^{\alpha_i})^2}\Big)^p dy \Big)\\
				&=\cO\Big( \varepsilon^p 	\Pi_{\substack{  l\neq i}  } \delta_{l,j}^{p(-a_{il} \alpha_l)}  \delta_i^{2-2\alpha_ip} \int_{\frac{\cA_{i'j}}{\delta_{i,j}} } \Big( \frac{ |y|^{ (-a_{ii-1})\alpha_{i-1}+(-a_{ii+1})\alpha_{i+1}} }{(1+|y|^{\alpha_i})^2}\Big)^p dy\Big)\\
				&\quad+\cO(\varepsilon^p \Pi_{\substack{  l\neq i}  } \delta_{l,j}^{(-a_{il} \alpha_l)p}  \delta_i^{2-2\alpha_ip} ).
			\end{split}
		\end{equation*}
		Due to the constructions \eqref{def:alpha_i} and \eqref{def:delta_i}, we have  $\delta_{i-1}^{\alpha_{i-1}}=\varepsilon^{N+2-i}$, $\delta_{i}^{\alpha_{i}}=\varepsilon^{N+1-i}$
		and $\delta_{i+1}^{\alpha_{i+1}}=\cO(\varepsilon^{N-i})$, which implies that
		\begin{equation}\label{eq:est_i3}
			\Pi_{\substack{  l\neq i}  } \delta_{l,j}^{p(-a_{il} \alpha_l)}  \delta_i^{-2\alpha_ip}=\cO(1).
		\end{equation} 
		If $i>i'$,  we can immediately deduce that 
		\[ \cI_j^{(i,i')}=\cO\Big( \varepsilon^p 	 \delta_{i-1,j}^{(-a_{ii-1} \alpha_{i-1})p} \delta_{i+1,j}^{(-a_{ii+1} \alpha_{i+1})p}  \delta_i^{2-2\alpha_ip}  \Big)\stackrel{\eqref{eq:est_i3}}{=}\cO(\varepsilon^p).\]
		Next, we consider the case $i<i'$. It holds  
		\begin{equation*}
			\begin{split}
				\cI^{(i, i')}_j&= 
				\cO \Big( \varepsilon^p 
				\Pi_{\substack{ l\geq i'+1\\ l\neq i}  } \delta_{l,j}^{p(-a_{il} \alpha_l)}  \int_{\delta_{i,j}
					\Omega_{ij} \cap  \cA_{i'j}} \Pi_{\substack{ l'\leq i'-1\\ l'\neq i} } |y|^{  p(-a_{il'} {\alpha_{i'}} )} 
				\\
				&\quad\times 
				\Big(\frac{ (\delta_{i',j}^{\alpha_{i'}}+ |y|^{\alpha_{i'}})^{-a_{ii'}} } {(\delta_{i,j}^{\alpha_i}+|y|^{\alpha_i})^2}\Big)^p dy \Big)\\
				&=\cO\Big(\varepsilon^p \int_{\delta_{i,j}
					\Omega_{ij} \cap  \cA_{i'j}} \Pi_{\substack{ l'\leq i'-1\\ l'\neq i} } |y|^{  p(-a_{il'} {\alpha_{i'}} )} 	\Big(\frac{ (\delta_{i',j}^{\alpha_{i'}}+ |y|^{\alpha_{i'}})^{-a_{ii'}} } {(\delta_{i,j}^{\alpha_i}+|y|^{\alpha_i})^2}\Big)^p dy  \Big)\\
				&= \cO\Big( \varepsilon^p \int_{\delta_{i,j}
					\Omega_{ij} \cap  \cA_{i'j}}\frac{1} {|y|^{(2\alpha_i- (-a_{ii-1})\alpha_{i-1}-(-a_{ii+1})\alpha_{i+1})p}} dy\Big)\\
				&=\cO(\varepsilon^p),
			\end{split}
		\end{equation*}
		where we applied that $2 -(2\alpha_i- (-a_{ii-1})\alpha_{i-1}-(-a_{ii+1})\alpha_{i+1})p>0$. 
		
		Hence, we obtain that  for any $i\neq i'$
		\[ \int_{\Omega_{ij}\cap \frac 1 {\delta_{i,j}} \cA_{i'j}} \frac{|\delta_{i,j}|^{2-2p}|y|^{(\alpha_i-2)p}}{(1+|y|^{\alpha_i})^{2p}}\left|e^{\Theta_{ij}(y)}\right|^p  dy =\mathcal{O}(\varepsilon^{\frac{2-p}{4N}}). \]
	\end{proof}
	\begin{lemma}~\label{lem:non_linear_basic}
		For any $p\geq 1$ and $r>1$, there are positive constants $c_1, c_2$ such that for any $\varepsilon>0$, the following estimates hold for any $\phi_1, \phi_2\in \oH$ and any $i=1, \dots, N $:
		\begin{equation}~\label{diff1_as}
			\| \varepsilon V_i e^{W_{i,\varepsilon} }(e^{\phi_1} -1-\phi_1)\|_{p}\leq c_1 \varepsilon^{\frac 1 {4N}\frac{2-2pr}{pr}}   e^{c_2 \|\phi_1\|^2}  \|\phi_1\|^2,
		\end{equation}
		and 
		\begin{align}
			~\label{diff1_as2}
			&\quad\| \varepsilon V_i e^{W_{i,\varepsilon} }(e^{\phi_1}-e^{\phi_2} -(\phi_1-\phi_2))\|_{p}\\
			&\leq c_1 \varepsilon^{\frac 1 {4N}\frac{2-2pr}{pr}} e^{c_2\sum_{h=1}^2\|\phi_h\|}  (\sum_{h=1}^2\|\phi_h\|) \|\phi_1-\phi_2\|. \nonumber
		\end{align}
	\end{lemma}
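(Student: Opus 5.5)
The plan is the standard Hölder--Moser--Trudinger argument. All the $\varepsilon$-dependence is put on the weight $\varepsilon V_i e^{W_{i,\varepsilon}}$, whose $L^{pr}$ norm is then computed from the bubble profiles. The bound stated in the lemma comes out of this only if that norm is no larger than the stated rate. I have not yet verified this, and I explain below why it is the step I expect to fail.

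\emph{Pointwise and Hölder step.} First I use the elementary inequalities $|e^{s}-1-s|\le e^{|s|}s^2$ and $|e^{a}-e^{b}-(a-b)|\le e^{|a|+|b|}(|a|+|b|)|a-b|$. With $\frac1r+\frac1{r'}=1$ and a further splitting $\frac1{r'}=\frac1{s_1}+\frac1{s_2}$, Hölder's inequality gives
\[
\| \varepsilon V_i e^{W_{i,\varepsilon}}(e^{\phi_1}-1-\phi_1)\|_p\le \|\varepsilon V_i e^{W_{i,\varepsilon}}\|_{pr}\,\|e^{|\phi_1|}\|_{ps_1r'}\,\|\phi_1\|_{2ps_2r'}^2 .
\]
The Sobolev embedding $\oH\hookrightarrow L^q$ for every $q<\infty$ bounds the last factor by $C\|\phi_1\|^2$. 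Applying the Moser--Trudinger inequality recalled after Lemma~\ref{lem:invertible_as} to $\pm t\phi_1$, together with $t\phi\le \frac{t^2\|\phi\|^2}{16\pi}+4\pi\frac{\phi^2}{\|\phi\|^2}$, gives $\|e^{|\phi_1|}\|_q\le C e^{c_2\|\phi_1\|^2}$. The difference estimate \eqref{diff1_as2} follows the same way, with $e^{|\phi_1|+|\phi_2|}$ handled by one more Hölder split, $\|\phi_1-\phi_2\|_q\le C\|\phi_1-\phi_2\|$, and $e^{c\|\phi_1\|\,\|\phi_2\|}$ absorbed into $e^{c_2\sum_h\|\phi_h\|}$ (the $\phi_h$ are taken in bounded sets, as in Lemma~\ref{lem:non_linear_term_as}). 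Everything therefore reduces to the claim
\[
\|\varepsilon V_i e^{W_{i,\varepsilon}}\|_{pr}\stackrel{?}{=}\mathcal O\big(\varepsilon^{\frac{1}{4N}\frac{2-2pr}{pr}}\big)=\mathcal O\big(\varepsilon^{-\frac{pr-1}{2Npr}}\big).
\]

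\emph{The weight estimate.} Set $q=pr$, taken close to $1$. Lemma~\ref{lem:diif_e^W_sum_e^u_as} is stated for any exponent in $(1,p_0)$, so I may apply it with $q$ in place of $p$ (shrinking $s_0$ so that $q<p_0$). It gives $\|2\varepsilon V_ie^{W_{i,\varepsilon}}-\sum_j\chi_je^{-\varphi_j}|y_{\xi_j}|^{\alpha_i-2}e^{U^i_j}\|_q=\mathcal O(\varepsilon^{\frac{2-q}{4N}})=o(1)$. It therefore suffices to bound the bubbles, and \eqref{eq:L_p_exp_a} gives $\|\chi_je^{-\varphi_j}|y_{\xi_j}|^{\alpha_i-2}e^{U^i_j}\|_q=\mathcal O(\delta_{i,j}^{-2\frac{q-1}{q}})$. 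Off the charts $U_{r_0}(\xi_j)$ we have $W_{i,\varepsilon}=\mathcal O(1)$, so that region contributes only $\mathcal O(\varepsilon)$. Inserting \eqref{def:delta_i}, the bound is $\varepsilon^{-2e_i\frac{q-1}{q}}$, where $\delta_i=\varepsilon^{e_i}$.

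\emph{Main obstacle.} The stated rate requires $2e_i\le\frac1{2N}$ for every $i$, and I do not see how to obtain this. Already for $i=N$ in the cases $\mathbf A_N,\mathbf B_N$ one has $e_N=\frac1{2N}$, hence $2e_N=\frac1N>\frac1{2N}$. For the concentrated components, $\delta_1^{\alpha_1}=\varepsilon^{N}$ or $\varepsilon^{N+1}$, the bubble norm is much larger still. The decisive step is the one on which the whole lemma rests: the $L^{pr}$ norm of $\varepsilon V_ie^{W_{i,\varepsilon}}$ must be essentially exact, so I cannot replace $e^{\Theta_{ij}}$ by a cruder bound. What I would try first is to redo that norm annulus by annulus on the regions $\cA_{i'j}$, as in the proof of Lemma~\ref{lem:diif_e^W_sum_e^u_as}, using Lemma~\ref{lem:theta_small_as} inside $\cA_{ij}$, the bounds for $\cI^{(i,i')}_j$ on the other annuli, and the explicit exponents of \eqref{def:delta_i}, and then see whether the constant $\frac1{2N}$ survives. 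If it does not, the lemma cannot hold with the stated rate. In that case the honest conclusion is the rate $\varepsilon^{-2\max_ie_i\frac{pr-1}{pr}}$, which tends to $\varepsilon^0$ as $pr\to1$ and still suffices for Lemma~\ref{lem:non_linear_term_as} and the choice of $p_0,r$ in Proposition~\ref{prop:fixed_point}.
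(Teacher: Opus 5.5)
Your reduction is the paper's proof step for step. It consists of the mean-value bound $|e^{\phi_1}-e^{\phi_2}-(\phi_1-\phi_2)|\le e^{|\phi_1|+|\phi_2|}|\phi_1-\phi_2|\sum_h|\phi_h|$, Hölder with the weight in $L^{pr}$, Moser--Trudinger and Sobolev for the other factors, and then Lemma~\ref{lem:diif_e^W_sum_e^u_as} plus \eqref{eq:L_p_exp_a} for $\|\varepsilon V_ie^{W_{i,\varepsilon}}\|_{pr}$. Your diagnosis is correct: the step you flag is exactly where the argument breaks, and the paper does not get past it. The paper asserts $\int_\Sigma\chi_j^{pr}e^{-pr\varphi_j}|y_{\xi_j}|^{pr(\alpha_i-2)}e^{prU^i_j}\,dv_g=\mathcal O(\delta_{i,j}^{2-2pr})=\mathcal O(\varepsilon^{\frac{2-2pr}{4N}})$. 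Since $2-2pr<0$, the second equality would need $\delta_{i,j}\gtrsim\varepsilon^{1/(4N)}$. That fails: for example $\delta_{1,j}\le C\varepsilon$ for every Cartan matrix considered, and $\delta_{1,j}\sim\varepsilon^{N/2}$ for $\mathbf A_N$.

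The annulus-by-annulus recomputation you propose cannot rescue the rate, because the factor you isolate really is that large. On $\frac12\delta_{i,j}\le|y_{\xi_j}|\le\delta_{i,j}$, Lemma~\ref{lem:extension_PU_as} and $\delta_i^{\alpha_i}\prod_{i'>i}\delta_{i'}^{a_{ii'}\alpha_{i'}}=\varepsilon$ give $\varepsilon e^{W_{i,\varepsilon}}\sim\delta_{i,j}^{-2}$. Hence $\|\varepsilon V_ie^{W_{i,\varepsilon}}\|_q\gtrsim\delta_{i,j}^{2(1-q)/q}$.

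In fact \eqref{diff1_as} is false as stated for large $p$. Take $\phi_1=t\psi$ with $\psi\in\oH$ smooth and equal to a constant $c_0>0$ on $U_{r_0}(\xi_j)$. Dividing by $t^2$ and letting $t\to0^+$ forces $\frac{c_0^2}{2}\|\varepsilon V_ie^{W_{i,\varepsilon}}\|_{L^p(U_{r_0}(\xi_j))}\le c_1\varepsilon^{\frac1{2N}\frac{1-pr}{pr}}\|\psi\|^2$. For $p=2$, $i=1$, the left side is $\gtrsim\delta_{1,j}^{-1}\gtrsim\varepsilon^{-1}$, while the right side is of order $\varepsilon^{-\frac{2r-1}{4Nr}}$ with $\frac{2r-1}{4Nr}<1$.

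Conversely, your tentative conclusion that the stated rate can never hold is slightly too strong. Write $\delta_i=\varepsilon^{e_i}$ as you do, and suppose $2\max_ie_i\frac{p-1}{p}<\frac{pr-1}{2Npr}$ (for example, $p$ close to $1$ for fixed $r$). Then putting the weight in $L^{p\tilde r}$, with $\tilde r\in(1,r)$ close to $1$, instead of $L^{pr}$ recovers the stated bound by the same argument. So the lemma needs either a restricted range of $(p,r)$ or your weaker rate.

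The rate your argument actually proves, $\varepsilon^{-2\max_ie_i\frac{pr-1}{pr}}$, is enough downstream, as you say. In Proposition~\ref{prop:fixed_point} one only needs $\frac{2-p_0}{4Np_0}>2\max_ie_i\frac{p_0r-1}{p_0r}$ in place of the condition imposed there, and this holds for $p_0,r$ close to $1$.

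Two minor points:
\begin{itemize}
\item Apply the recalled inequality $\log\int_\Sigma e^u\,dv_g\le\frac1{8\pi}\int_\Sigma|\nabla_gu|^2\,dv_g+C$ directly to $\pm q\phi_1$. Your extra AM--GM step is unnecessary, and when $\partial\Sigma\neq\emptyset$ the exponential-square inequality it implicitly uses holds only with $2\pi$ in place of $4\pi$.
\item The paper's proof, like yours, produces $e^{c\sum_h\|\phi_h\|^2}$ rather than $e^{c_2\sum_h\|\phi_h\|}$ in \eqref{diff1_as2}. So restricting to bounded sets, as in Lemma~\ref{lem:non_linear_term_as}, is the right reading.
\end{itemize}
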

	\begin{proof}
		By the mean value theorem,  for some $s\in (0,1)$
		\[ |e^{\phi_1}-e^{\phi_2} -\phi_1+\phi_2|\leq \left| e^{s\phi_1+(1-s)\phi_2} -1\right||\phi_1-\phi_2|\leq  e^{\sum_{h=1}^2 |\phi_h|} |\phi_1-\phi_2|\sum_{h=1}^2 |\phi_h|.\]
		The H\"{o}lder's inequality, Sobolev inequality, and Moser-Trudinger inequality yield that 
		\begin{align*}
			&\quad	\Big( \int_{\Sigma} V_i^pe^{p \sum_{j=1}^m\Big(PU^i_j+ \sum_{\sumi}^N \frac{a_{ii'}} 2PU^{i'}_j\Big)} | e^{\phi_1} -e^{\phi_2}-(\phi_1-\phi_2)| ^p dv_g\Big)^{1/p} \\
			&\leq C \sum_{h=1}^2
			\Big(  \int_{\Sigma}
			V_i^p	e^{p \sum_{j=1}^m\Big(PU^i_j+ \sum_{\sumi}^N \frac{a_{ii'}} 2  PU^{i'}_j\Big) }(  e^{|\phi_1|+|\phi_2|}  |\phi_1-\phi_2||\phi_h| ) ^p dv_g \Big)^{1/p}  \\
			&\leq  C \sum_{h=1}^2 \Big(\int_{\Sigma} V_i^{pr}e^{pr\Big(PU^i_j+ \sum_{\sumi}^N \frac{a_{ii'}} 2PU^{i'}_j\Big)} dv_g\Big)^{\frac{1}{pr}} \Big( \int_{\Sigma}   e^{ps (|\phi_1|+|\phi_2|)} dv_g\Big)^{\frac{1}{ps}}\\
			&\quad \times \Big(\int_{\Sigma} |\phi_1-\phi_2|^{pt} |\phi_h|^{pt} dv_g\Big)^{\frac{1}{pt}}\\
			&\leq  C \sum_{h=1}^2 \Big(\int_{\Sigma} V_i^{pr}e^{pr\Big(PU^i_j+ \sum_{\sumi}^N \frac{a_{ii'}} 2PU^{i'}_j\Big)} dv_g\Big)^{\frac{1}{pr}} e^{\frac{ps}{8\pi}(\sum_{h=1}^{2}\|\phi_h\|^2)}   \|\phi_1-\phi_2\| \|\phi_h\|,
		\end{align*}
		where $ r,s,t \in (1, +\infty), {  \frac{1}{r}+\frac{1}{s}+\frac{1}{t}=1}$. 
		Applying Lemma~\ref{lem:diif_e^W_sum_e^u_as}, we deduce that 
		\begin{equation*}
			\begin{split}
				\|2\varepsilon V_i e^{W_{i,\varepsilon}}\|_{pr}
				&\leq \Big\|2 \varepsilon V_ie^{W_{i,\varepsilon}}-\sum_{j=1}^m\chi_j e^{-\varphi_j}|y_{\xi_j}|^{\alpha_i-2} e^{U^i_j} \Big\|_{pr}\\
				&\quad+ \sum_{j=1}^m\Big\|\chi_j e^{-\varphi_j}|y_{\xi_j}|^{\alpha_i-2} e^{U^i_j} \Big\|_{pr}\\
				&\leq \sum_{j=1}^m\Big\|\chi_j e^{-\varphi_j}|y_{\xi_j}|^{\alpha_i-2} e^{U^i_j} \Big\|_{pr}+\mathcal{O}(\varepsilon^{\frac 1 {4N}\frac{2-pr}{ pr}}).
			\end{split}
		\end{equation*}
		By Lemma~\ref{lem:extension_PU_as} and the definition in \eqref{def:delta_i}, it follows that
		\begin{align*}
			\int_{\Sigma} \chi^{pr}_j e^{-pr\varphi_j}|y_{\xi_j}|^{pr(\alpha_i-2)} e^{pr U^i_j}dv_g&=  \mathcal{O}\Big(\delta_{i,j}^{2-\alpha_i pr}\int_{\Omega_{ij}} \Big(\frac{ |y|^{\alpha_i-2}}{(1+|y|^{\alpha_i})^2} \Big)^{pr} dy \Big)+\mathcal{O}(\delta_{i,j}^{\alpha_i rp})\\
			&= \mathcal{O}(\delta_{i,j}^{2-2pr})= \mathcal{O}( \varepsilon^{\frac{2-2pr}{4N}}).
		\end{align*}
		By setting $\phi_2\equiv 0$, \eqref{diff1_as} follows immediately. 
	\end{proof} 
	
	%%%%%%%%%%%%%%%%%%%%%%%%%%%%%%%%%%%%%%%%%%%%%%%%%%%%%%%%%%%%

	%The acknowledgments section should not be numbered.
	%	\section*{Acknowledgments}
	%	We would like to thank 

	%%%%%%%%%%%%%%%%%%%%%%%%%%%%%%%%%%%%%%%%%%%%%%%%%%%%%%
	%          7. REFERENCES SECTION
	%%%%%%%%%%%%%%%%%%%%%%%%%%%%%%%%%%%%%%%%%%%%%%%%%%%%%%

	%\bibliographystyle{plain}
	%\bibliography{ref_Toda.bib}

	%%%%%%%%%%%%%Information of authors%%
	\medskip
	\noindent Zhengni Hu
	
	\noindent  School of Mathematical Sciences, Shanghai Jiao Tong University\\
	800 Dongchuan Road, Shanghai, 200240,  P. R. China \\ 
	Email: \textsf{	\href{mailto: zhengni_hu2021@outlook.com}{ zhengni\_hu2021@outlook.com}}
	
	\vskip10pt
	
	\noindent Miaomiao Zhu
	
	\noindent  School of Mathematical Sciences, Shanghai Jiao Tong University\\
	800 Dongchuan Road, Shanghai, 200240, P. R. China \\ 
	Email: \textsf{	\href{mailto: mizhu@sjtu.edu.cn}{ mizhu@sjtu.edu.cn}}
	
\end{document}